\documentclass[11pt,a4paper]{article}
\usepackage{cite}
\usepackage{amsmath, amsthm, amssymb,tikz}
\usepackage{tikz-cd}
\usetikzlibrary{arrows}
\usepackage{fancyhdr}
\textheight 8.5in \textwidth 6.5in \oddsidemargin 0in \topmargin
-.35in
\parskip=\baselineskip
\font\teneurm=eurm10 \font\seveneurm=eurm7 \font\fiveeurm=eurm5
\newfam\eurmfam
\textfont\eurmfam=\teneurm \scriptfont\eurmfam=\seveneurm
\scriptscriptfont\eurmfam=\fiveeurm

\font\teneusm=eusm10 \font\seveneusm=eusm7 \font\fiveeusm=eusm5
\newfam\eusmfam
\textfont\eusmfam=\teneusm \scriptfont\eusmfam=\seveneusm
\scriptscriptfont\eusmfam=\fiveeusm

\font\tencmmib=cmmib10 \skewchar\tencmmib='177
\font\sevencmmib=cmmib7 \skewchar\sevencmmib='177
\font\fivecmmib=cmmib5 \skewchar\fivecmmib='177
\newfam\cmmibfam
\textfont\cmmibfam=\tencmmib \scriptfont\cmmibfam=\sevencmmib
\scriptscriptfont\cmmibfam=\fivecmmib

\RequirePackage{color}
\definecolor{myred}{rgb}{0.75,0,0}
\definecolor{mygreen}{rgb}{0,0.5,0}
\definecolor{myblue}{rgb}{0,0,0.65}

  \RequirePackage[pdftex,
   colorlinks = true,
   urlcolor = myblue, % \href{...}{...} external (URL)
   citecolor = mygreen, % \cite{...}
   linkcolor = myred, % \ref{...} and \pageref{...}
   ]
 {hyperref}
\numberwithin{equation}{section}
\newtheorem{lemma}{Lemma}[section]
\newtheorem{theorem}[lemma]{Theorem}%[section]
\newtheorem{conjecture}[lemma]{Conjecture}%[section]
\newtheorem{corollary}[lemma]{Corollary}%[section]
\newtheorem{proposition}[lemma]{Proposition}%[section]
\theoremstyle{definition}
\newtheorem{example}[lemma]{Example}%[section]
\newtheorem{remark}[lemma]{Remark}%[section]
\newtheorem{definition}[lemma]{Definition}%[section] 

\def\16{{\bf 16}}
\def\1{{\bf 1}}
\def\2{{\bf 2}}
\def\3{{\bf 3}}
\def\4{{\bf 4}}
\def\bar{\overline}

\def\hat{\widehat}
\def\Sp{{\mathrm{Sp}}}

\def\tSp{{\widetilde{\mathrm{Sp}}}}

\def\Sym{{\mathrm{Sym}}}

\DeclareMathOperator{\Hilb}{Hilb}
\DeclareMathOperator{\Spec}{Spec}

\DeclareMathOperator{\Proj}{\mathrm{Proj}}

\DeclareMathOperator{\Lie}{Lie}
\DeclareMathOperator{\Hom}{Hom}
\DeclareMathOperator{\Fl}{Fl}
\DeclareMathOperator{\Gr}{Gr}
\DeclareMathOperator{\Jac}{Jac}

\DeclareMathOperator{\Pic}{Pic}

\DeclareMathOperator{\Res}{Res}
\DeclareMathOperator{\cox}{\textbf{cox}}
\DeclareMathOperator{\HY}{HY}
\DeclareMathOperator{\FT}{FT}

\DeclareMathOperator{\diag}{\mathrm{diag}}
\DeclareMathOperator{\gr}{gr}
\DeclareMathOperator{\HHH}{\mathrm{HHH}}

\newcommand{\tW}{\widetilde{W}}

\newcommand{\cO}{\mathcal{O}}

\newcommand{\cP}{\mathcal{P}}

\newcommand{\cC}{\mathcal{C}}
\newcommand{\cI}{\mathcal{I}}
\newcommand{\cN}{\mathcal{N}}
\newcommand{\cT}{\mathcal{T}}
\newcommand{\cK}{\mathcal{K}}
\newcommand{\cV}{\mathcal{V}}

\newcommand{\fb}{\mathfrak{b}}

\newcommand{\tT}{\widetilde{T}}
\newcommand{\fl}{\mathfrak{l}}

\newcommand{\fg}{\mathfrak{g}}
\newcommand{\ft}{\mathfrak{t}}
\newcommand{\bPic}{\overline{\Pic}}
\newcommand{\bJac}{\overline{\Jac}}
\newcommand{\bx}{\mathbf{x}}
\newcommand{\by}{\mathbf{y}}
\newcommand{\bI}{\mathbf{I}}

\newcommand{\Sn}{\mathfrak{S}_n}
\newcommand{\Br}{\mathfrak{B}r}
\newcommand{\C}{{\mathbb C}}

\newcommand{\N}{{\mathbb N}}
\newcommand{\A}{\mathbb{A}}
\newcommand{\G}{\mathbb{G}}
\newcommand{\Z}{{\mathbb Z}}
\newcommand{\Q}{{\mathbb Q}}

\newcommand{\tJ}{\widetilde{J}}
\renewcommand{\L}{\mathbb{L}}
\renewcommand{\hom}{\mathrm{Hom}}

\renewcommand{\P}{\mathbb{P}}
\begin{document}
\begin{titlepage}
\begin{flushright}
\end{flushright}
\vskip 1.5in
\begin{center}
{\bf\Large{Unramified affine Springer fibers and isospectral Hilbert schemes}}
\vskip
0.5cm {Oscar Kivinen} \vskip 0.05in {\small{\textit{Department of Mathematics, California Institute of Technology}}}
\end{center}
\vskip 0.5in
\baselineskip 16pt
\begin{abstract}
For any connected reductive group $G$ over $\C$, we revisit 
Goresky--Kottwitz--MacPherson's description of the torus 
equivariant Borel-Moore homology of affine Springer fibers 
$\Sp_{\gamma}\subset \Gr_G$, where $\gamma=zt^d$ and $z$ is 
a regular semisimple element in the Lie algebra of $G$. In the case $G=GL_n$, we relate the 
equivariant cohomology of $\Sp_\gamma$ to Haiman's 
work on the isospectral Hilbert scheme of points on the plane. We also explain the connection to the
HOMFLY homology of $(n,dn)$-torus links, and formulate a conjecture
describing the homology of the Hilbert scheme of points on the curve $\{x^n=y^{dn}\}$.  
\end{abstract}
\date{\today}
\tableofcontents
\end{titlepage}
\pagestyle{fancy}
\lhead{Affine Springer fibers and Hilbert schemes}
\rhead{}

\section{Introduction}
In this paper, we study a family of affine Springer fibers depending on 
a connected reductive group $G$ over $\C$ and a positive integer $d$. 
Recall that an affine Springer fiber $\Sp_\gamma^{\textbf{P}}$ is a 
sub-ind-scheme of a partial affine flag variety $\Fl^\textbf{P}$ (see 
\cite{Yun} and Section \ref{sec:affinespringer}) that can be 
informally thought of as a zero-set of a vector field for an element of 
the loop Lie algebra of $G$, $\gamma \in \fg\otimes \C((t))$. For us, $
\gamma=zt^d$, where $z$ is any regular semisimple element in $\fg(\C)$.
Without loss of generality, we may take $z$ to be an element of $\Lie(T)^{reg}$, where $T$ is a fixed maximal torus of $G$. In fact, all of our results hold for $\gamma\in \Lie(T)^{reg}\otimes \C((t))$ that are equivalued, but for simplicity we only consider this case.

Using the methods of Goresky-Kottwitz-MacPherson \cite{GKM1,GKM2}, 
we compute the equivariant Borel-Moore homology of $\Sp_\gamma^\textbf{P}$ when $
\textbf{P}$ is a maximal compact subgroup. In 
this case, we simply denote $\Sp_\gamma^\textbf{P}=\Sp_\gamma$. This is by definition a 
reduced sub-ind-scheme of the affine Grassmannian of $G$. Fix a maximal torus and a Borel subgroup
$T\subset B\subset G$, and denote $\Lie(T)=\ft, \Lie(B)=\fb, \Lie(G)=
\fg$. Let moreover the cocharacter lattice of $T$ be $\Lambda:=X_*(T)$. Denote by $\C[\Lambda]=
\C[X_*(T)]$ the group algebra of the cocharacter lattice. This can be canonically identified with functions on the Langlands dual torus $T^\vee$, or as the (non-quantized) $3d$ $\cN=4$ Coulomb branch algebra for $(T,0)$ as in \cite{BFN2}.

Our first result is the following theorem, proved as 
Theorem \ref{thm:mainthm}.
\begin{theorem}
\label{thm:intromainthm}
Let $\Delta=\prod_\alpha y_\alpha \in H^*_T(pt)$ be the Vandermonde element. The equivariant Borel-Moore homology of $X_d:=\Sp_{t^dz}$ for a reductive group $G$ is up to multiplication by $\Delta^d$ canonically isomorphic as a (graded) $\C[\Lambda]\otimes\C[\ft]$-module to the ideal
$$J_G^{(d)}=\bigcap_{\alpha\in \Phi^+} J_\alpha^d \subset \C[\Lambda]\otimes \C[\ft].$$ In particular, there is a natural algebra structure on $\Delta^d H_*^T(\Sp_\gamma)$ inherited from $\C[\Lambda]\otimes \C[\ft]$, and $J_G^{(d)}$ is a free module over $\C[\ft]$.
\end{theorem}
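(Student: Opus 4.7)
The plan is to derive the isomorphism from the Goresky--Kottwitz--MacPherson description of the $T$-equivariant Borel--Moore homology of $\Sp_\gamma$, and then to translate the resulting moment-graph relations into membership in the ideal $J_G^{(d)}$. Concretely, I would first invoke equivariant formality of $\Sp_\gamma$ (established in the equivalued setting in \cite{GKM1,GKM2}) to obtain an injection
\[
H_*^T(\Sp_\gamma)\hookrightarrow H_*^T\bigl((\Sp_\gamma)^T\bigr)=\bigoplus_{\lambda\in\Lambda}\C[\ft]\cdot[\lambda],
\]
whose image is cut out by the GKM relations associated to the $1$-dimensional $T$-orbits. The $T$-fixed locus of $\Sp_\gamma$ is identified with the cocharacter lattice $\Lambda$ via translation by $t^\lambda$, and so this step reduces the theorem to a purely combinatorial statement.

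Next, I would enumerate the $1$-dimensional $T$-orbits: for $\gamma=zt^d$ equivalued, each positive root $\alpha\in\Phi^+$ contributes, for every $\lambda\in\Lambda$ and each $k$ in a range of size $d$, a $\P^1$-orbit connecting the fixed points $\lambda$ and $\lambda+k\alpha^\vee$ with $T$-weight $y_\alpha$. (Since $T$ does not scale the loop parameter, the level $k$ affects only the set of edges, not the tangent character.) The corresponding GKM conditions are divisibility statements of the form $y_\alpha^{m_k}\mid(f_\lambda-f_{\lambda+k\alpha^\vee})$, where the multiplicities $m_k$ are determined by the valuation structure of $\gamma$ and accumulate, across varying $k$, to a uniform order-$d$ vanishing of $f\bmod y_\alpha$ along the translation direction $\alpha^\vee$.

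The final step is to re-express these conditions ideal-theoretically. Under the tautological identification $\bigoplus_\lambda\C[\ft]\cdot[\lambda]=\C[\Lambda]\otimes\C[\ft]$ given by $[\lambda]\leftrightarrow e^\lambda$, the combined GKM relations at a fixed $\alpha$ become precisely $f\in J_\alpha^d$. Intersecting over all $\alpha\in\Phi^+$ gives $J_G^{(d)}$. The prefactor $\Delta^d$ then appears through the Atiyah--Bott/Thomason localization formula: by the equivalued hypothesis, the local Euler class at every $T$-fixed point of $\Sp_\gamma$ equals $\pm\Delta^d$, so the localization map lands in $\Delta^{-d}(\C[\Lambda]\otimes\C[\ft])$ and multiplication by $\Delta^d$ yields a canonical embedding into $J_G^{(d)}\subset\C[\Lambda]\otimes\C[\ft]$. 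The induced algebra structure on $\Delta^d H_*^T(\Sp_\gamma)$ is simply the restriction of multiplication in $\C[\Lambda]\otimes\C[\ft]$ to the ideal $J_G^{(d)}$, and $\C[\ft]$-freeness of $J_G^{(d)}$ is a transcription of equivariant formality of $\Sp_\gamma$, which guarantees that $H_*^T(\Sp_\gamma)$ is free over $H^*_T(\pt)=\C[\ft]$.

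The main obstacle I anticipate is the passage from the edge-by-edge GKM divisibilities to the clean ideal statement $f\in J_\alpha^d$. A priori the GKM conditions only impose scalar divisibility of finitely many differences $f_\lambda-f_\mu$ by $y_\alpha$; repackaging these into a $d$-th power ideal condition requires exploiting the $\Lambda$-translation symmetry of the moment graph along $\alpha^\vee$ and carrying out a Taylor expansion in the variable $1-e^{\alpha^\vee}$ on the quotient $(\C[\Lambda]\otimes\C[\ft])/y_\alpha$. A closely related technical point, to be settled while enumerating the moment graph, is verifying that the $1$-dimensional orbits in $\Sp_{zt^d}$ indeed contribute with the correct multiplicities $m_k$ summing to $d$ along each root direction; this is a geometric calculation in the affine Grassmannian using the valuation $d$ of $\gamma$, and is what ultimately pins down the exponent in both $\Delta^d$ and $J_\alpha^d$.
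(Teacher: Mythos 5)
Your overall strategy (GKM localization, then translation of moment-graph relations into ideal membership) is the right one, but there are several genuine gaps, some of which are exactly where the paper has to do real work.

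First, the description of the moment graph and the divisibility conditions is off. Working with the small torus $T$ rather than the extended torus $\tT$, \emph{all} the $1$-dimensional orbits in the direction of a fixed root $\alpha$ have the same $T$-weight $y_\alpha$. Consequently the connected components of $X^{\ker y_\alpha}$ are not isolated $\P^1$'s but entire chains of Spaltenstein varieties (for $SL_2$, chains of $sp_d$'s), and the GKM/Brion conditions are residue relations over all fixed points of such a component, not edge-by-edge divisibilities $y_\alpha^{m_k}\mid(f_\lambda-f_{\lambda+k\alpha^\vee})$. The paper handles this by computing with $\tT$-equivariance (where the orbits have pairwise distinct weights $y_\alpha+(a+b)t$), and only afterwards setting $t=0$. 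Your proposed ``Taylor expansion in $1-e^{\alpha^\vee}$'' to recover the $d$-th power is a plausible heuristic but is not a proof; in the paper the equality $y_\alpha^d H^T_*(X_d^\alpha)=J_\alpha^d$ is established by exhibiting explicit generating classes $a_i,b_k$ together with a rank count against the known Poincar\'e polynomial of a chain of $sp_d$'s (Lemma~\ref{lem:sl2} and Corollary~\ref{cor:rk1}). Establishing the reverse inclusion (that the classes span) is precisely the non-formal step that your sketch leaves unresolved, and you flag it yourself as the ``main obstacle.''

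Second, the appeal to Atiyah--Bott to explain the $\Delta^d$ normalization is incorrect as stated: $\Sp_{zt^d}$ is singular (its components are Spaltenstein varieties), so fixed points do not have local Euler classes in the usual sense, and the paper explicitly notes that even rational smoothness of components fails beyond rank one. The $\Delta^d$ factor arises instead from the pole-order bound in the Brion/GKM localization theorem combined with the number ($d$ per root) of edges through each vertex. Third, you omit the reduction $H^T_*(X_d)=\bigcap_\alpha H^T_*(X_d^\alpha)$ (from \cite[Lemma 5.12]{GKM2}), which is what makes the whole problem rank-one local, and you omit the final commutative-algebra step: after showing $\Delta^d H^T_*(X_d)\subseteq\bigcap_\alpha J_\alpha^d$ and that both become equal after inverting $\Delta$, one still needs a lemma (the paper invokes \cite[Lemma 6.14]{GoHo}) to conclude equality on the nose from freeness over $\C[\ft]$. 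Relatedly, your claim that freeness of $J^{(d)}_G$ ``is a transcription of equivariant formality'' is circular: equivariant formality gives freeness of $\Delta^d H^T_*(\Sp_\gamma)$ over $\C[\ft]$, but freeness of $J^{(d)}_G$ only follows once the identification $J_G^{(d)}=\Delta^d H^T_*(\Sp_\gamma)$ has been proved, which is the content of the theorem.
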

Throughout, $H_*^T(-)$ denotes the equivariant BM homology, see Section \ref{sec:BMhomology} for details. In a few places, we also use the ordinary $T$-equivariant homology as in \cite{GKM2}; it is denoted $H_{*,ord}^T(-)$.

\subsection{Anti-invariants and subspace arrangements}
Let $W$ be the finite Weyl group associated with $G$ and $sgn$ be 
the one-dimensional representation of $W$ where all reflections act by 
$-1$. 
Observe that there is a natural left action $W\times T\to T$, and therefore actions $$W\times T^*T^\vee\to T^*T^\vee, W\times \C[T^*T^\vee]\to \C[T^*T^\vee].$$
Note that the cocharacter lattice $\Lambda=X_*(T)$ naturally identifies with the character lattice of $T^\vee$. In particular, $\C[\Lambda]\cong \C[T^\vee]$, where the left-hand side denotes group algebra and the right-hand side denotes ring of regular functions. The cotangent bundle of $T^\vee$ is trivial, and in particular has fibers $\ft$. Therefore $\C[\Lambda]\otimes \C[\ft]\cong \C[T^*T^\vee]$.
 
Using the description of the equivariant Borel-Moore homology given in Theorem \ref{thm:intromainthm}, we expect a relationship between
the cohomology of $\Sp_\gamma$ and the $sgn$-isotypic component of the natural diagonal $W$-action on $
\C[T^*T^\vee]$. First of all, it is not hard to see the following result.
\begin{theorem}
\label{thm:introanti}
Let $I_G\subseteq \C[T^*T^\vee]$ be the ideal generated by $W$-alternating regular functions in $\C[T^*T^\vee]$ with respect to the diagonal action. Then there is an injective map
$$I_G^d\hookrightarrow J_G^{(d)}\cong \Delta^d H^T_*(\Sp_\gamma).$$
\end{theorem}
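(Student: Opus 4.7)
The plan is to establish the set-theoretic containment $I_G^d \subseteq J_G^{(d)}$ of ideals in $\C[T^*T^\vee]$; the injectivity of the resulting map is then automatic, and the identification of the target with $\Delta^d H^T_*(\Sp_\gamma)$ is the content of Theorem \ref{thm:intromainthm}. So all of the work lies in proving this single containment.

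The first ingredient is to recognize $J_\alpha$ as (containing) the vanishing ideal of the fixed subvariety $(T^*T^\vee)^{s_\alpha}=(T^\vee)^{s_\alpha}\times\ft^{s_\alpha}$ under the diagonal action of the simple reflection $s_\alpha$: its factor $y_\alpha$ cuts out $\ft^{s_\alpha}\subset\ft$, and the remaining generator in $\C[\Lambda]$ cuts out $(T^\vee)^{s_\alpha}$. The second ingredient is the standard antisymmetry--vanishing principle: if $\sigma$ is an involution of a smooth affine variety $X$ and $f\in\C[X]$ satisfies $\sigma^* f=-f$, then $f$ vanishes on $X^\sigma$, hence $f\in\cI(X^\sigma)$. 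Applied here, every $W$-alternating function $f$ is in particular $s_\alpha$-antisymmetric for each $\alpha\in\Phi^+$, so $f\in J_\alpha$. Thus $I_G\subseteq\bigcap_{\alpha}J_\alpha=J_G^{(1)}$. The statement for general $d$ follows by bootstrapping: $I_G^d$ is, by definition, generated as a $\C[T^*T^\vee]$-module by $d$-fold products $f_1\cdots f_d$ of $W$-alternating functions, and for each such product and each positive root $\alpha$, $f_i\in J_\alpha$ gives $f_1\cdots f_d\in J_\alpha^d$. Taking linear combinations and intersecting over $\alpha$ yields $I_G^d\subseteq\bigcap_{\alpha}J_\alpha^d=J_G^{(d)}$.

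The main obstacle is not deep but demands some care: one must verify that $J_\alpha$, as it arises in the GKM calculation leading to Theorem \ref{thm:intromainthm}, really does coincide with (or at least contain) the full vanishing ideal of $(T^*T^\vee)^{s_\alpha}$. When $(T^\vee)^{s_\alpha}$ is connected this is immediate, but in general this subgroup can be disconnected, and one should confirm that the GKM-defined $J_\alpha$ encodes vanishing on every component --- consistent with the fact that an $s_\alpha$-antisymmetric regular function does vanish on each such component. Once this bookkeeping is settled, the containment $I_G^d\subseteq J_G^{(d)}$, and hence the theorem, is a formal consequence of antisymmetry under each reflection plus the definition of the $d$-th power of an ideal.
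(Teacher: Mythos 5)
Your argument reproduces the paper's proof of Theorem \ref{thm:anti}: a $W$-alternating $f$ is $s_\alpha$-antisymmetric for each $\alpha\in\Phi^+$, hence vanishes on the $s_\alpha$-fixed locus, hence lies in $J_\alpha$ by the Nullstellensatz, and the inclusion $I_G^d\subseteq J_G^{(d)}$ then follows formally. The disconnectedness issue you flag is resolved in the benign direction --- $V(J_\alpha)=\{x^{\alpha^\vee}=1,\ y_\alpha=0\}$ is always \emph{contained in} (possibly properly, as for $SL_2$) the fixed locus $(T^*T^\vee)^{s_\alpha}$, so antisymmetry does force vanishing on $V(J_\alpha)$; the only residual point, which neither you nor the paper make explicit, is that $J_\alpha$ must be radical for the Nullstellensatz to give honest (rather than radical) membership, and this holds because $x^{\alpha^\vee}-1$ is squarefree in $\C[\Lambda]$ and $y_\alpha$ is a coordinate.
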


Consequently, any $W$-alternating regular function on $T^*T^\vee$ has a unique expression as a cohomology class in $H_*^T(\Sp_\gamma)$, where $\gamma=zt$.

In the case when $G=GL_n$, this isotypic part for the 
corresponding action on $T^*\ft^\vee$ was studied by Haiman \cite{Hai} 
in his study of the Hilbert scheme of points on the plane. 
More specifically, he considered the ideal $I\subset \C[x_1,\ldots, 
x_n,y_1,\ldots, y_n]$ generated by the anti-invariant polynomials, and 
proved that it is first of all equal to $J=\bigcap_{i\neq j}\langle x_i-x_j,y_i-
y_j\rangle$ and moreover free over the $y$-variables. Note that if 
$f\in \C[\bx^\pm, \by]$, it is by definition of the form $f=\frac{g}{(x_1\cdots x_n)^k}$ for some $g\in \C[\bx,\by]$ and $k\geq 0$. Since 
the denominator is a symmetric polynomial, $g\in \C[\bx,\by]$ is 
alternating for the diagonal $\Sn$-action if and only if $f$ is so. In 
particular, in the localization $\C[\bx^\pm,\by]$ we have that  $I_\bx
\cong I_{GL_n}$ for $I_G$ as in Theorem \ref{thm:introanti}.

Let us quickly sketch how the Hilbert scheme of points $\Hilb^n(\C^2)$ 
enters the picture. Let $A\subset \C[\bx,\by]$ be the space of 
antisymmetric polynomials for the diagonal action of $\Sn$. From for 
example \cite[Proposition 2.6]{HaiMacdonald}, we have that $$\Proj 
\bigoplus_{m\geq 0} A^m\cong \Hilb^n(\C^2).$$ In addition, 
$$\Proj \bigoplus_{m\geq 0} I^m \cong X_n,$$ where $$X_n\cong (\C^{2n}\times_{\C^{2n}/\Sn} \Hilb^n(\C^2))^{red}$$ is the so-called {\em isospectral Hilbert scheme}. The superscript $red$ means that we are taking the reduced fiber product, or fiber product in category of varieties instead of schemes.

By results of \cite{Hai1}, we have $I^m=\bigcap_{i\neq j} \langle x_i-x_j, y_i-y_j\rangle^m$, so that $I^d_\bx\cong J_{GL_n}^{(d)}$. In Section \ref{sec:isospectral}, we prove our next main result following this line of ideas.
\begin{theorem}
\label{thm:introisospectral}
There is a graded algebra structure on $$\bigoplus_{d\geq 0} \Delta^d H_*^T(\Sp_{zt^d}).$$ When $G=GL_n$, we have
$$\Proj \bigoplus_{d\geq 0} \Delta^d H_*^T(\Sp_{zt^d}) \cong Y_n,$$ where $Y_n$ is the isospectral Hilbert scheme on $\C^*\times \C$.
\end{theorem}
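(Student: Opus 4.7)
The plan is to construct the graded multiplication from the natural Rees-type multiplication on the ideals $J_G^{(d)}$, and then, for $G=GL_n$, recognize the resulting Proj by pulling back Haiman's isospectral Hilbert scheme $X_n$ along an open inclusion of bases.

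First, Theorem~\ref{thm:intromainthm} canonically identifies $\Delta^d H_*^T(\Sp_{zt^d})$ with the ideal $J_G^{(d)}=\bigcap_{\alpha}J_\alpha^d$ inside the common ambient ring $R:=\C[\Lambda]\otimes\C[\ft]$. For each root $\alpha$ we have $J_\alpha^d\cdot J_\alpha^e\subseteq J_\alpha^{d+e}$, and the elementwise observation $x\in\bigcap_\alpha J_\alpha^d,\ y\in\bigcap_\alpha J_\alpha^e \Rightarrow xy\in\bigcap_\alpha J_\alpha^{d+e}$ shows $J_G^{(d)}\cdot J_G^{(e)}\subseteq J_G^{(d+e)}$. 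Hence multiplication in $R$ promotes $\bigoplus_{d\geq 0} J_G^{(d)}$ to a graded $R$-subalgebra of the polynomial Rees algebra $R[t]$, which transports back through Theorem~\ref{thm:intromainthm} to the claimed graded algebra structure on $\bigoplus_d \Delta^d H_*^T(\Sp_{zt^d})$.

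Specializing to $GL_n$, one has $R=\C[\bx^\pm,\by]$. Haiman's result recalled in the introduction yields $I^d=\bigcap_{i\neq j}\langle x_i-x_j,y_i-y_j\rangle^d$ in $\C[\bx,\by]$, where $I$ is the ideal of diagonal antisymmetrics; the symmetric element $x_1\cdots x_n$ is a non-zero-divisor on each $I^d$, and inverting it identifies $I^d_\bx\cong J_{GL_n}^{(d)}$ as in the discussion preceding the theorem. Compatibility with the Rees multiplication then gives
$$\bigoplus_{d\geq 0}\Delta^d H_*^T(\Sp_{zt^d})\;\cong\;\Big(\bigoplus_{d\geq 0} I^d\Big)_{x_1\cdots x_n}$$
as graded $R$-algebras. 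Applying Haiman's identification $X_n\cong\Proj\bigoplus_{d\geq 0} I^d$ together with the base-change property for relative $\Proj$, the Proj of the right-hand side is the open subscheme of $X_n$ sitting over $\Sym^n(\C^*\times\C)\hookrightarrow\Sym^n(\C^2)$, which is by definition the isospectral Hilbert scheme $Y_n$ on $\C^*\times\C$.

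I expect the main obstacle to be the careful handling of the $\Proj$/localization step. One must verify that inverting the symmetric degree-zero element $x_1\cdots x_n$ in the Rees algebra corresponds exactly to restricting $X_n$ over the open locus where $x_1\cdots x_n\neq 0$ in the base, and that the reduced fiber product defining $X_n$ restricts cleanly to the reduced fiber product defining $Y_n$. Both are formal consequences of the base-change behaviour of relative $\Proj$, but they depend on Haiman's deep theorems (intersection-equals-power, freeness over $\C[\by]$, and the isomorphism $X_n\cong\Proj\bigoplus_d I^d$) to ensure that no nilpotents or extraneous components appear after localization. Once those inputs are in place, the remainder of the argument is bookkeeping with graded rings.
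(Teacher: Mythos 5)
Your proposal is correct and matches the paper's argument: the graded multiplication is established by exactly the same elementwise Rees-type argument on the ideals $J_G^{(d)}=\bigcap_\alpha J_\alpha^d$ (the paper's Proposition in Section \ref{sec:isospectral-othertypes}), and the identification with $Y_n$ proceeds, as you describe, by combining Theorem \ref{thm:mainthm}, Haiman's results $I^d = J^{(d)}$ and $X_n\cong\Proj\bigoplus_d I^d$, and the compatibility of $\Proj$ with localizing at the degree-zero symmetric element $x_1\cdots x_n$ (the paper phrases this as blow-up commuting with restriction to opens, citing the Stacks Project, which is the same base-change statement you invoke). The subtlety you flagged about the reduced fiber product behaving well under this restriction is indeed where the paper leans on the Stacks Project lemma and on $Y_n=\rho^{-1}(\Hilb^n(\C^*\times\C))$ being a principal open subset of $X_n$.
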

We next observe that the natural map $\rho: X_n\to \Hilb^n(\C^2)$ 
restricts to a map $Y_n\to \Hilb^n(\C^*\times \C)$. 
Define the {\em Procesi bundle} on $\Hilb^n(\C^2)$ to be $\cP:=\rho_*
\cO_{X_n}$. By results of Haiman, this is a vector bundle of rank $n!$. 
We then have the following corollary to Theorem 
\ref{thm:introisospectral}.
\begin{corollary}
We have that 
$$H^0(\Hilb^n(\C^*\times \C, \cP\otimes \cO(d))=J^{(d)}_{GL_n}=\Delta^d\cdot H_*^T(\Sp_\gamma),$$ where $\gamma=zt^d$.
\end{corollary}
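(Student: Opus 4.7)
The plan is to string together Theorem \ref{thm:introisospectral} with the projection formula for the map $\rho\colon Y_n\to \Hilb^n(\C^*\times\C)$, and with Haiman's identification of $H^0$ of the tautological line bundle on the isospectral Hilbert scheme.

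First, by Theorem \ref{thm:introisospectral}, $Y_n = \Proj \bigoplus_{d\ge 0} \Delta^d H_*^T(\Sp_{zt^d}) = \Proj \bigoplus_{d\ge 0} J^{(d)}_{GL_n,\bx}$, and this equips $Y_n$ with a tautological line bundle $\cO_{Y_n}(1)$. I would then invoke Haiman's result that the analogous Rees construction on $\C^{2n}$ gives $H^0(X_n, \cO_{X_n}(d)) = I^d$ (in every degree, not just asymptotically), and observe that, by restricting to the complement of the locus $\{x_1\cdots x_n = 0\}$ — equivalently, by localizing the bigraded ring at the $x_i$'s — the same statement reads $H^0(Y_n, \cO_{Y_n}(d)) = I^d_{\bx} = J^{(d)}_{GL_n}$. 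This uses the interplay between $I^d$ and $J^{(d)}_{GL_n}$ already recorded in the introduction ($I^d = \bigcap_{i\ne j}\langle x_i-x_j,y_i-y_j\rangle^d$ by Haiman, so $I^d_\bx\cong J^{(d)}_{GL_n}$).

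Next I would use the compatibility of the two Proj presentations with the natural projections. The inclusion of the alternating polynomials $A\hookrightarrow I$ induces the map $\rho\colon X_n\to \Hilb^n(\C^2)$ with $\rho^*\cO_{\Hilb}(1) = \cO_{X_n}(1)$; the same holds for the restrictions, giving $\rho\colon Y_n\to \Hilb^n(\C^*\times\C)$ with $\rho^*\cO(1)=\cO_{Y_n}(1)$. By the projection formula and the definition $\cP = \rho_*\cO_{X_n}$ (which restricts to $\rho_*\cO_{Y_n}$ over $\Hilb^n(\C^*\times\C)$),
$$\rho_*\cO_{Y_n}(d) = \rho_*\bigl(\cO_{Y_n}\otimes \rho^*\cO(d)\bigr) = \cP\otimes \cO(d).$$
Since $\rho$ is finite (by Haiman's theorem), pushforward is exact and preserves global sections, so
$$H^0(\Hilb^n(\C^*\times\C),\, \cP\otimes \cO(d)) = H^0(Y_n,\, \cO_{Y_n}(d)) = J^{(d)}_{GL_n} = \Delta^d\cdot H_*^T(\Sp_\gamma),$$
the last equality being Theorem \ref{thm:intromainthm}.

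The main obstacle is the first step: verifying that on the non-standard Proj $\bigoplus_d J^{(d)}_{GL_n}$ one really has $H^0(\cO(d)) = J^{(d)}_{GL_n}$ in every degree $d$, and not merely in large degree or up to saturation. The hard content of Haiman's work precisely supplies this: the Rees algebra $\bigoplus_d I^d$ is normal and the blowup map is small/finite with the expected sheaf cohomology, so no higher-cohomology correction appears. Once this is imported from $X_n$ to its open subscheme $Y_n$ by localization at the $x_i$'s, the rest of the argument is a formal projection-formula computation.
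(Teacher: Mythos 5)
Your proof is correct and takes essentially the same route as the paper's: identify $H^0(\Hilb^n(\C^*\times\C),\cP\otimes\cO(d))$ with $H^0(Y_n,\cO_{Y_n}(d))$ via the projection formula and $\cP=\rho_*\cO_{X_n}$, then localize at the $x_i$'s to pass from Haiman's $H^0(X_n,\cO_{X_n}(d))=I^d$ to $H^0(Y_n,\cO_{Y_n}(d))=J^{(d)}_\bx$, and finally invoke Theorem \ref{thm:mainthm}. The only difference is that you spell out the projection formula and the non-asymptotic section-ring statement that the paper leaves implicit (the paper compresses the first step to ``by definition'' and relies on Corollary \ref{cor:trighilb}); your extra caution about why $H^0(\cO(d))=I^d$ exactly, coming from normality of the Rees algebra, is a fair point that the paper glosses over.
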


Our results can be at least interpreted in terms of 
the Coxeter arrangement for the root data of $G$ or $G^\vee$. More precisely, $
\C[X_*(T)]$ can be thought of as the ring of functions on the dual 
torus $T^\vee\cong (\C^*)^n$, which in turn is the complement of ``coordinate 
hyperplanes" in $\ft^\vee\cong X_*(T)\otimes_\Z \C$ for the basis given by 
fundamental weights determined by $B$. Note that the resulting divisor is independent of $B$. 

There is another hyperplane arrangement in this 
space, determined by $\Phi^\vee$, which is called the Coxeter 
arrangement, and can be viewed as the locus where at least one of the positive coroots vanishes. Inside $T^\vee$, this corresponds to 
the divisor $$\cV=\bigcup_{\alpha} \cV_\alpha = \left\lbrace \prod_{\alpha\in \Phi^+} (1-x^{\alpha^\vee})=0 \right\rbrace\subset T^\vee.$$ 

Let us go back to $\ft^\vee$ for a while. We may ``double" the Coxeter hyperplane arrangement inside $\ft^\vee$ to a codimension two arrangement in $\ft\oplus \ft^\vee$ as follows. 
Each $\alpha^\vee$ corresponds to a positive root $\alpha$ for $G$, whose vanishing locus is a hyperplane $\cV_\alpha^\vee$ in $\ft$. Both $\alpha, \alpha^\vee$ also determine hyperplanes inside $\ft\oplus \ft^\vee$ by the same vanishing conditions, and by abuse of notation we will denote these also by $\cV_\alpha, \cV^\vee_\alpha$. By intersecting, we then get a codimension two subspace $\cV_\alpha\cap \cV_\alpha^\vee$. It is clear from the description that the union of these subspaces as $\alpha$ runs over $\Phi^+$ is defined by the ideal
$$\bigcap_{\alpha \in \Phi^+} \langle y_\alpha, x_{\alpha^\vee}\rangle \subset \C[\ft\oplus \ft^\vee].$$

Here $x_{\alpha^\vee}$ and $y_\alpha$ are the linear functionals associated to $\alpha^\vee, \alpha$.
Localizing away from the coordinate hyperplanes in $\ft^\vee$, we then see that the ideal $J_G\subset \C[T^*T^\vee]$ from earlier determines a doubled Coxeter arrangement inside $T^*T^\vee$. In fact, it is immediate from the description that its Zariski closure inside $T^*\ft^\vee$ equals $\bigcup_\alpha \cV_\alpha\cap \cV_\alpha^\vee$.
In the $GL_n$ case, this doubled subspace arrangement coincides with the one studied by Haiman. In 
\cite[Problem 1.5(b)]{HaiComm}, Haiman poses the question of what happens for other root systems.  Reinterpreting the doubling procedure to mean the root system and its (Langlands) dual in $T^*T^\vee$, instead of taking $\cV\otimes \C^2\subset \ft\otimes \C^2$, we have  freeness of $J_G$ in ``half of the variables" by Theorem \ref{thm:intromainthm}, which answers the question in {\it loc. cit}.

There are several other corollaries to Theorem \ref{thm:intromainthm} that we  now illustrate. 

Let $G=GL_n$. It is a conjecture of Bezrukavnikov-Qi-Shan-Vasserot (private communication) that under the lattice action of $\Lambda$ on 
$H^*(\tSp_\gamma)$, where $\gamma=zt$, we also have 
$$H^*(\tSp_\gamma)^\Lambda\cong DH_n$$ and $$H^*(\Sp_\gamma)^\Lambda\cong DH_n^{sgn}.$$
While we are not able to prove said conjecture, we are able to prove an analogous statement in Borel-Moore homology for the {\em coinvariants} under the lattice action on the sign character part, see Theorem \ref{thm:bezconj}. (From this, one can also deduce the statement in cohomology for the sign character part.)

\begin{theorem}
We have $$H_*(\Sp_\gamma)_\Lambda\cong DH_n^{sgn}.$$
\end{theorem}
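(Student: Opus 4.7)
The plan is to propagate the identification supplied by Theorem \ref{thm:intromainthm} through two reductions and then recognize the resulting finite-dimensional quotient as $DH_n^{sgn}$ via a sign-averaging argument. First I specialize Theorem \ref{thm:intromainthm} to $d=1$: this gives an isomorphism of $\C[\Lambda]\otimes\C[\ft]=\C[\bx^\pm,\by]$-modules
$$\Delta\cdot H_*^T(\Sp_\gamma)\;\cong\;J:=\bigcap_{i<j}\langle x_i-x_j,\,y_i-y_j\rangle\;\subset\;\C[\bx^\pm,\by],$$
together with the freeness of $J$ over $\C[\ft]=\C[\by]$. The freeness yields equivariant formality of $\Sp_\gamma$, so $H_*(\Sp_\gamma)\cong J/\by J$ as $\C[\Lambda]$-modules, and $\Lambda$-coinvariants corresponds to further evaluating $x^\lambda=1$:
$$H_*(\Sp_\gamma)_\Lambda\;\cong\;J\big/(\by,\,x_1-1,\ldots,x_n-1)\,J.$$

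Next I perform the invertible change of coordinates $x_i=1+u_i$, valid in a neighborhood of the closed point $(\mathbf{1},0)\in T^\vee\times\ft$ at which we are taking the fiber. This carries $J$ to the Haiman ideal $I:=\bigcap_{i<j}\langle u_i-u_j,\,y_i-y_j\rangle\subset\C[\mathbf{u},\by]$. Since taking a fiber at a closed point commutes with localization,
$$H_*(\Sp_\gamma)_\Lambda\;\cong\;I/\fm I,\qquad \fm=\langle u_1,\ldots,u_n,y_1,\ldots,y_n\rangle.$$

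The final step identifies $I/\fm I$ with $DH_n^{sgn}$ as a graded $\Sn$-representation. By Haiman's theorem, $I=\C[\mathbf{u},\by]\cdot A$, where $A$ is the space of $\Sn$-antisymmetric polynomials, so there is an $\Sn$-equivariant surjection $A\twoheadrightarrow I/\fm I$; this already forces $I/\fm I$ into the sign isotypic component. Let $\epsilon=\frac{1}{n!}\sum_{w\in\Sn}sgn(w)\,w$ be the sign projector. For $a\in A$ and $q\in\C[\mathbf{u},\by]$ one has $\epsilon(q\cdot a)=\mathrm{sym}(q)\cdot a$; applying this termwise to an element $f=\sum_i m_i g_i\in A\cap\fm I$ with $m_i\in\fm$ and $g_i=\sum_j p_{ij}a_{ij}\in I$, one gets $f=\epsilon(f)=\sum_{i,j}\mathrm{sym}(m_i p_{ij})\,a_{ij}\in\C[\mathbf{u},\by]^{\Sn}_+\cdot A$. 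Combined with the obvious reverse inclusion, this yields $A\cap\fm I=\C[\mathbf{u},\by]^{\Sn}_+\cdot A$ and hence
$$I/\fm I\;=\;A\big/\bigl(\C[\mathbf{u},\by]^{\Sn}_+\cdot A\bigr)\;=\;DH_n^{sgn}.$$

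The main technical hurdle is the passage from equivariant to ordinary Borel--Moore homology in the first step. This requires equivariant formality of $\Sp_\gamma$, which is fortunately packaged into Theorem \ref{thm:intromainthm} via the freeness clause. Once this is in hand, the remaining manipulations---the change of coordinates and the sign-averaging kernel computation---are elementary.
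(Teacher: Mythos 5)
Your proof is correct, and the first two reductions coincide with the paper's own argument: you pass from equivariant to ordinary Borel--Moore homology using the $\C[\by]$-freeness supplied by Theorem \ref{thm:intromainthm}, take $\Lambda$-coinvariants to obtain $J/(\by,\,x_i-1)J$, and then observe (as the paper does, via commuting localization with taking a fiber and translation invariance of the unlocalized ideal) that this equals the fiber $I/\fm I$ of Haiman's ideal at the origin.

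Where you genuinely depart from the paper is the last step. The paper identifies $J/\langle \bx,\by\rangle J$ with $DR_n^{sgn}$ by quoting Haiman's geometric result \cite[Proposition 6.1.5]{Hai1} realizing this quotient as $H^0$ of $\cO(1)$ on the zero fiber of the Hilbert--Chow morphism. You instead give a direct commutative-algebra argument: the averaging identity $\epsilon(qa)=\mathrm{sym}(q)\cdot a$ for $a\in A$, applied to an element of $A\cap\fm I$ written in terms of generators of $I$ lying in $A$, shows that $A\cap\fm I=\C[\mathbf{u},\by]^{\Sn}_+\cdot A$, whence $I/\fm I = A/\C[\mathbf{u},\by]^{\Sn}_+\cdot A = DR_n^{sgn}$. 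This is a more elementary route: it avoids the $\cO(1)$-geometry altogether and requires from Haiman only the equality $I=J$ of the anti-invariant ideal with the intersection of the pairwise diagonal ideals (which is already essential to translate Theorem \ref{thm:intromainthm} into Haiman's language). Both routes, of course, ultimately rest on the polygraph theorem, so the gain is in transparency rather than logical independence.
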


Let us then discuss the freeness over $\Sym(\ft)$ of the ideals $J^{(d)}_G$ and related ideals in more detail. For example, 
in type A, 
it is clear that the simultaneous substitution $x_i\mapsto x_i+c, c\in \C, i=1,\ldots, n$ leaves $J_G$ invariant, so that the freeness over $\Sym(\ft)$ of $\bigcap_{i\neq j} \langle x_i-x_j,y_i-y_j\rangle \subset \C[\bx,\by]$ can be deduced from that of $J_G$. We remark that the results of Section \ref{sec:ratell} can also be used to show this statement.

\begin{theorem}
\label{thm:freeness}
Let $G=GL_n$ and $J=\bigcap_{i\neq j} \langle x_i-x_j,y_i-y_j\rangle \subset \C[\bx,\by]$.
Then we have $\Delta^d\cdot H_*^T(\Sp_\gamma)\cong J^d_\bx\subset \C[\bx^
\pm,\by]$, where the subscript $\bx$ denotes localization in the $\bx$-variables. In particular, $J^d\subset \C[\bx,\by]$ is free over $\C[\by]:=
\C[y_1,\ldots,y_n]$. 
\end{theorem}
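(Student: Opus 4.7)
The plan is to combine Theorem \ref{thm:intromainthm}, specialized to $G = GL_n$, with Haiman's results to obtain the isomorphism $\Delta^d H_*^T(\Sp_\gamma) \cong J^d_\bx$, and then to upgrade freeness of the localized ideal $J^d_\bx$ over $\C[\by]$ to freeness of $J^d$ itself, using the type-$A$ translation symmetry $x_i \mapsto x_i + c$.

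For the isomorphism, Theorem \ref{thm:intromainthm} identifies $\Delta^d H_*^T(\Sp_\gamma)$ with $J_{GL_n}^{(d)} = \bigcap_{i<j}\langle x_i - x_j,\, y_i - y_j\rangle^d \subset \C[\bx^\pm,\by]$ (using $\langle x_i/x_j - 1\rangle = \langle x_i - x_j\rangle$ after inverting $\bx$). Haiman's equality of ordinary and symbolic powers \cite{Hai1} gives $J^d = \bigcap_{i<j}\langle x_i - x_j,\, y_i - y_j\rangle^d$ inside the unlocalized ring $\C[\bx,\by]$, and exactness of localization together with finiteness of the intersection yields $J^d_\bx = J_{GL_n}^{(d)}$, proving the first assertion.

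For freeness of $J^d$ over $\C[\by]$, I would use a $\mathrm{Tor}$-vanishing argument. The ideal $J^d$ is bigraded (by total degrees in $\bx$ and in $\by$), and $\C[\by]$ is positively graded with $\C[\by]_0 = \C$, so graded Nakayama reduces the claim to showing $\mathrm{Tor}_i^{\C[\by]}(J^d,\C) = 0$ for $i > 0$. Since $\C[\bx^\pm]$ is flat over $\C[\bx]$ and commutes with the $\C[\by]$-action,
\[
\mathrm{Tor}_i^{\C[\by]}(J^d,\, \C) \otimes_{\C[\bx]} \C[\bx^\pm] \;\cong\; \mathrm{Tor}_i^{\C[\by]}(J^d_\bx,\, \C),
\]
and the right-hand side vanishes by freeness of $J^d_\bx$ from Theorem \ref{thm:intromainthm}. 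Hence $M := \mathrm{Tor}_i^{\C[\by]}(J^d,\C)$ is $\bx$-torsion as a $\C[\bx]$-module, and since $\tau_c \colon x_i \mapsto x_i + c$ preserves $J^d$, it inherits a compatible, locally finite $\G_a$-action.

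The main obstacle is to argue that a $\C[\bx]$-module $M$ which is simultaneously $\bx$-torsion and equipped with a compatible locally finite translation $\G_a$-action must vanish. Given $m \in M$ with $\bx^a m = 0$, we have $\prod_i (x_i + c)^{a_i}\, \tau_c(m) = \tau_c(\bx^a m) = 0$; on the finite-dimensional cyclic submodule $\C[\bx]\cdot \tau_c(m) \subset M$ each $x_i$ acts nilpotently, so for $c \neq 0$ each $x_i + c$ is invertible, forcing $\tau_c(m) = 0$. Local finiteness makes $c \mapsto \tau_c(m)$ polynomial in $c$, whence $m = \tau_0(m) = 0$. This vanishing of $M$ gives flatness, and hence (by graded Nakayama) the desired freeness of $J^d$ over $\C[\by]$.
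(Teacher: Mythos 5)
Your overall strategy matches the one the paper sketches in the introduction: identify $\Delta^d H_*^T(\Sp_\gamma)$ with $J^{(d)}_{GL_n}\cong J^d_\bx$ via Theorem \ref{thm:mainthm} and Haiman's equality of symbolic and ordinary powers, then use the diagonal translation invariance $x_i\mapsto x_i+c$ to transfer freeness over $\C[\by]$ from $J^d_\bx$ down to $J^d$. The $\mathrm{Tor}$-vanishing framework is a sensible way to carry this out, and your first part (the identification) and the graded Nakayama/flat base change reductions are all fine.

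There is, however, a genuine gap in the step that kills the torsion module $M:=\mathrm{Tor}_i^{\C[\by]}(J^d,\C)$. You assert that on the cyclic $\C[\bx]$-submodule $\C[\bx]\cdot\tau_c(m)\subset M$ each $x_i$ acts nilpotently, and hence that $x_i+c$ is invertible for $c\neq 0$. This does not follow from $M_\bx=0$. The condition $M_\bx=0$ only says that some power of the \emph{product} $\prod_i x_i$ kills each element; it does not force any individual $x_i$ to be nilpotent (already $\C[x_1,x_2]/(x_1)$ has $x_1x_2$ acting by $0$, while $x_2$ is a nonzerodivisor). Correspondingly, the two relations you have available for $\tau_c(m)$ — that some power of $\prod_i x_i$ and some power of $\prod_i(x_i+c)$ both annihilate it — pin its $\C[\bx]$-support down only to the locus $\bigl(\bigcup_i\{x_i=0\}\bigr)\cap\bigl(\bigcup_i\{x_i=-c\}\bigr)$, which is nonempty once $n\geq 2$. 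So the claimed vanishing $\tau_c(m)=0$ is not justified as written, and the subsequent polynomiality step does not rescue it.

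The repair is short and keeps your strategy intact. For each fixed $c$, the element $\tau_c(m)$ lies in $M$, so there is $k_c$ with $(\prod_i x_i)^{k_c}\tau_c(m)=0$. Applying the inverse twist $\tau_{-c}$ yields $\prod_i(x_i-c)^{k_c}\,m=0$. Since this holds for every $c\in\C$, the zero locus $V(\mathrm{Ann}_{\C[\bx]}(m))$ is contained in $\bigcap_{c\in\C}\bigcup_{i}\{x_i=c\}$, which is empty because the coordinates of any fixed point form a finite set. By the Nullstellensatz $\mathrm{Ann}(m)=(1)$, so $m=0$ and $M=0$, giving the desired flatness. Equivalently, one can phrase this as a support argument: the $\G_a$-equivariance forces $\mathrm{Supp}_{\C[\bx]}(M)$ to be invariant under the diagonal translation, and an invariant closed subset of $\bigcup_i\{x_i=0\}$ must be empty. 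The paper also notes an alternative route through the positive part $\Sp_{t^d z}^+\subset\Gr_{GL_n}^+$ in Section \ref{sec:ratell}, which yields $\Delta^d H_*^T(\Sp_{t^d z}^+)\cong J^{(d)}$ directly without any localization-removal step, so freeness is immediate from equivariant formality.
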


It is somewhat subtle that Theorem \ref{thm:intromainthm} does not 
immediately imply the freeness over $\Sym(\ft)$ of the ideals in $\C[T^*T^\vee], \C[T^*\ft^\vee]$ generated by the anti-invariants, even in type A. Of course, 
one would hope for a similar description as Haiman's for arbitary $G$, but it seems likely some modifications are in order outside of type $A$ \cite{Gor, Gin}.

Haiman's original proof \cite{Hai1} of a related stronger statement, 
``the Polygraph Theorem", implying the freeness of the anti-invariant ideal $I$ and its powers over $\C[\by]$, 
and thus freeness of $J^d=J^{(d)}$ over $\C[\by]$, involves rather subtle commutative 
algebra. Until recently, it has been the only way of showing the freeness of 
$J^{(d)}$ without giving a clear conceptual explanation. On the other hand, 
Theorem \ref{thm:freeness} gives a quite hands-on explanation of this phenomenon. It does not seem to be impossible to use the representation-theoretic interpretation of $J^{(d)}$ and the $S_n$-action on $H_*^T(\Sp_\gamma)$ to try to directly attack freeness of $I^d$.

In fact, recent work of Gorsky-Hogancamp \cite{GoHo} on knot homology gives 
another proof of Theorem \ref{thm:freeness}. Their results also rest on 
results of Elias-Hogancamp \cite{EH} on the HOMFLY homology of $(n,dn)
$-torus links, which involves some quite nontrivial computations with 
Soergel bimodules. In this paper, the complexity of the freeness 
statement is hidden in the cohomological purity of $\Sp_\gamma$ as proved by Goresky-
Kottwitz-MacPherson \cite{GKM3}.

\subsection{Relation to braids}

Let us first consider a general connected reductive group $G$. {\em Any} $\gamma\in \fg\otimes \C((t))$ gives a nonconstant (polynomial) loop $[\gamma] \in \hom(\Spec\C[t^\pm], \ft^{reg}/W),$ through which we get a conjugacy class $\beta\in\pi_1(\ft^{reg}/W)\cong \Br_W$. Note that we do not have a natural choice of basepoint, so that $\beta$ is not a bona fide element of the braid group, but just a conjugacy class. 

Let now $G=GL_n$. Then the braid closure  $\overline{\beta}$ is a knot or link in $S^3$.
For links in the three-sphere, it is natural to consider various link invariants, such as the triply graded Khovanov-Rozansky homology (or HOMFLY homology) \cite{KR2}. This is an assignment 
$$\beta\mapsto \HHH(\overline{\beta})$$ of $\Z^{\oplus 3}$-graded $\Q$-vector spaces to braids, which factors through Markov equivalence. The invariant $\HHH(-)$ was recently generalized to $y$-ified HOMFLY homology in \cite{GoHo}. It is an assignment of $\Z^{\oplus 3}$-graded $\C[y_1,\ldots, y_m]$-modules to braids, and has many remarkable properties. We will discuss these in more detail in Section \ref{sec:braids}.
 
We are mostly interested in $\HY(-)$ for the braid associated to $\gamma=zt^d$, following previous parts of this introduction. In this case, $\beta$ is the $(nd)$th power of a Coxeter braid $\cox_n$ (positive lift of the Coxeter element in $S_n$). In particular, $\beta$ is the $(d)$th power of the {\em full twist} braid $\cox^n_n$. Note that since $\beta$ is central, it is alone in its conjugacy class and thus an actual braid. Taking the braid closure of $\beta$, it is well-known that we recover the $(n,dn)$ torus link $T(n,dn)$.

\begin{remark}
The closures of powers of the Coxeter braids $\cox^m_G$ and their relation to 
affine Springer theory has appeared in the literature in several places \cite{OY1, VV1, GORS}, in the case where $m$ is prime to the Coxeter number of $G$. The case we consider is the one where $m$ is a multiple of the Coxeter number.
\end{remark}

Now, progress in knot homology theory by several people \cite{GoHo, GNR, EH, Mel} has lead to an identification of the Hochschild degree zero part 
of the  $y$-ified HOMFLY homology of $(n,nd)$-torus links and the 
ideals $J^d=\bigcap_{i<j}\langle x_i-x_j,y_i-y_j\rangle$ from above. In particular, combining these results and Theorem \ref{thm:mainthm}, we get the following corollary, proved in Corollary \ref{coro:ft}.
\begin{corollary}
There is an isomorphism of $\C[\bx^\pm, \by]$-modules 
$$\Delta^dH_*^T(\Sp_\gamma)\cong \HY(\FT_n^d)^{a=0}\otimes_{\C[\bx]}\C[\bx^\pm]$$ for $\gamma=zt^d$.
\end{corollary}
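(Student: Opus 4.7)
The plan is to chain together two separate identifications: one coming from affine Springer theory (Theorem \ref{thm:intromainthm}, specialized to type $A$ as in Theorem \ref{thm:freeness}) and one coming from knot homology, and to check that the $\C[\bx^\pm,\by]$-module structures match on the overlap after localizing the knot-homology side in $\bx$.

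First, I would apply Theorem \ref{thm:freeness} to $\gamma=zt^d$ to obtain
$$\Delta^d H^T_*(\Sp_\gamma)\cong J^d_\bx\subset \C[\bx^\pm,\by],$$
where $J=\bigcap_{i\neq j}\langle x_i-x_j,y_i-y_j\rangle$. Here I would use Haiman's result $I^d=J^d=J^{(d)}$ (\cite{Hai1}, as recalled in the discussion after Theorem \ref{thm:introisospectral}) to identify the intersection-of-powers ideal appearing in Theorem \ref{thm:intromainthm} with the ordinary power $J^d$. This takes care of the left-hand side as an explicit submodule of the polynomial ring in the $\by$-variables and Laurent polynomial ring in the $\bx$-variables.

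Second, I would invoke the identification of the Hochschild-degree-zero part of the $y$-ified HOMFLY homology of the full twist to the $d$-th power,
$$\HY(\FT_n^d)^{a=0}\cong J^d\subset \C[\bx,\by],$$
as $\C[\bx,\by]$-modules. This is the output of the combined results of Elias--Hogancamp \cite{EH}, who compute $\HHH$ of $T(n,nd)$; Gorsky--Hogancamp \cite{GoHo}, who construct the $y$-ification and relate it to $J^d$; and Gorsky--Negut--Rasmussen \cite{GNR} and Mellit \cite{Mel}, who match these modules to the natural sheaves on the isospectral Hilbert scheme. Tensoring this identification with $\C[\bx^\pm]$ over $\C[\bx]$ yields
$$\HY(\FT_n^d)^{a=0}\otimes_{\C[\bx]}\C[\bx^\pm]\cong J^d\otimes_{\C[\bx]}\C[\bx^\pm]=J^d_\bx,$$
which agrees with the left-hand side, giving the claimed isomorphism.

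The main obstacle will be bookkeeping rather than any new geometric input: one must verify that the $\C[\bx^\pm,\by]$-module structure (and the natural triple grading, once one selects the appropriate convention) on $\Delta^d H^T_*(\Sp_\gamma)$ coming from the GKM description is identified, under the isomorphism of Theorem \ref{thm:freeness}, with the natural action on $J^d_\bx$ inherited from $\C[\bx^\pm,\by]$; and that the $\bx$-localized $\C[\bx,\by]$-action on $\HY(\FT_n^d)^{a=0}$ coming from the bimodule / Soergel description agrees with the same action on $J^d_\bx$. Once these module structures are matched (which amounts to tracking Haiman's action and the $\bx,\by$-actions on the $y$-ified Rouquier complex), the corollary follows formally by transitivity.
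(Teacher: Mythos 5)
Your proposal is correct and matches the paper's own argument: the paper deduces this corollary immediately from the Gorsky--Hogancamp identification $\HY(\FT_n^d)^{a=0}\cong J^d\subset\C[\bx,\by]$ (stated just above as a theorem) together with Theorem \ref{thm:freeness}, exactly the two ingredients you chain together, with localization in $\bx$ providing the bridge. The extra remarks you make about matching module structures are a sensible sanity check but not a new idea beyond what the paper implicitly uses.
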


\begin{remark}
Assuming the purity of affine Springer fibers, one is able to deduce further results. If $$\gamma=\begin{pmatrix}
a_1t^{d_1}&&\\ & \ddots & \\ &&a_nt^{d_n}
\end{pmatrix},$$ the construction above gives us a pure braid $\beta$ whose braid closure has linking numbers $d_{ij}=\min(d_i,d_j)$ between components $i, j$.

By \cite[Proposition 5.5]{GoHo},
if $\beta$ has "parity", ie. $\HHH(\bar{\beta})$ is only supported in even or odd homological degrees, we have the following isomorphism of bigraded $\C[\bx, \by]$-modules
$$\HY^{a=0}(\bar{\beta})\cong \cap_{i<j} \langle x_i-x_j,y_i-y_j\rangle^{d_{ij}}.$$
By equivariant formality of $H_*(\Sp_\gamma)$, we then have in analogy to the equivalued case that 
$$\prod_{i<j}(y_i-y_j)^{d_{ij}}H^T_*(\Sp_\gamma)\cong  \bigcap_{i<j} \langle x_i-x_j,y_i-y_j\rangle^{d_{ij}} \otimes_{\C[\bx]}\C[\bx^\pm]\cong \HY^{a=0}(\bar{\beta})\otimes_{\C[\bx]}\C[\bx^\pm].$$
\end{remark}

\begin{remark}
It is not clear to us what the correct analogues, if any, of these 
link-theoretic notions are for other root data. While the definition of the HOMFLY homology as 
Hochschild homology of certain complexes of Soergel bimodules 
\cite{Khovanov} certainly makes sense in all types, many aspects of the 
theory, including the $y$-ification process, are undeveloped at the 
time. Work in progress by Hogancamp and Makisumi addresses some of these questions.
 
It is also an interesting question whether the resulting (Hochschild) homology of the (complex corresponding to the) full twist is parity, or related to $J_G$ for other types.
\end{remark}

\subsection{Hilbert schemes of points on curves}
It is useful to think of the link $\bar{\beta}$ from the previous section as the link of the plane curve singularity which is the pullback along $\gamma$ of the universal spectral curve over $\ft^{reg}/\Sn$. Recall that the {\em link} of $C\subset \C^2$ at $p\in C$ is the intersection of $C$ with a small three-sphere centered at $p$. In particular, $Link(C,p)$ is a compact one-manifold inside $S^3$, i.e. a link in the previous sense. Motivated by conjectures of Gorsky-Oblomkov-Rasmussen-Shende \cite{ORS, GORS}
there should then be a relationship of the affine Springer fibers, Hilbert schemes of points on the plane and link homology to the Hilbert schemes of the plane 
curve singularities $\{x^n=y^{dn}\}$. Namely, for $G=GL_n$ and 
$$\gamma=\begin{pmatrix}
a_1t^d&&\\
&\ddots&\\
&&a_nt^d
\end{pmatrix}$$ the characteristic polynomial of $\gamma$ is 
$$P(x)=\prod_i (x-a_it^d).$$ We may assume that $a_i=\zeta^i$ for $\zeta$ a primitive $n$th root of unity, in which case $P(x)=x^n-t^{dn}$. This determines a spectral curve in $\A^2$ with coordinates $(x,t)$, with a unique singularity at zero. It has a unique projective model with rational components and no other singularities. Call this curve $C$.

The compactified Jacobian of any curve $C$, denoted $\bJac(C)$, is by definition the moduli space of torsion-free rank one, degree zero sheaves 
on $C$. It is known by eg. \cite{Ngo} that in the case when $C$ has at worst planar singularities (and is reduced), we have a homeomorphism of stacks 
\begin{equation}\label{eq:producttheorem}\bJac(C)\cong \Jac(C)\times^{\prod_{x\in C^{sing}\Jac(C_x)}}\prod_{x\in C^{sing}}\bJac(C_x),\end{equation}
where $\bJac(C_x)$ is a local version of the compactified Jacobian at a closed point $x\in C$, sometimes also called the Jacobi factor.
In the case when $C=\{x^n=t^{dn}\}$, we have just a unique singularity 
and rational components, so that Eq. \eqref{eq:producttheorem} becomes 
a homeomorphism between a quotient of the moduli of fractional ideals in $\text{Frac}(\C[[x,y]]/x^n-y^{dn})$ and the compactified Jacobian. From the lattice 
description of the affine Grassmannian, it is not too hard to show that 
this former space actually equals $\Sp_\gamma/\Lambda$ \cite{MY}.

It is an interesting problem to determine the Hilbert schemes of points $C^{[n]}$ on these curves. These are naturally related to the 
compactified Jacobians via an Abel-Jacobi map, which has a local 
version as well. In the case when $C$ is integral, it is known that 
the global map becomes a $\P^{n-2g}$-bundle for $g\gg 0$, and respectively an isomorphism in the local case. In general we only 
know that it is so for a union of irreducible components of the compactified Picard, of which 
there are infinitely many (for each connected component) in the case when $C$ has locally reducible singularities.

In \cite{Kiv}, we have initiated an approach to computing $H_*(C^{[n]})$ where $C$ is reducible, using a certain algebra action on 
$$V:=\bigoplus_{n\geq 0} H_*(C^{[n]}).$$ Note that this is a bigraded vector space, where one of the gradings is given by the number of points $(n,0)$, and the other one is given by the homological degree $(0,j)$.
\begin{theorem}[\cite{Kiv}]
Let $$A_m:=\C[x_1,\ldots,x_m,\partial_{y_1},\ldots,\partial_{y_m},\sum_i \partial_{x_i}, \sum_i y_m]\subset \text{Weyl}(\A^{2m}),$$
where $x_i$ carries the bigrading $(1,0)$ and $y_i$ the bigrading $(1,2)$. Suppose $C$ is locally planar and has $m$ irreducible components. Then there is a geometrically defined action 
$A_m\times V\to V.$ 
\end{theorem}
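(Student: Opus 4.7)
The strategy is to construct the generators of $A_m$ as convolution operators on $\bigoplus_n H_*(C^{[n]})$ built from geometric correspondences on nested Hilbert schemes of $C$, in the spirit of Grojnowski--Nakajima but adapted to the singular, reducible curve setting. For each smooth component locus $C_i^{sm}$, I would form the nested incidence variety
$$Z_i^{[n,n+1]} = \{(Z,Z') \in C^{[n]} \times C^{[n+1]} : Z \subset Z', \ \mathrm{supp}(I_Z/I_{Z'}) \in C_i^{sm}\},$$
which is smooth of the expected dimension because the added point avoids the singular locus and $C$ is locally planar. The raising operator $x_i$ is defined by push-pull through $Z_i^{[n,n+1]}$ using its fundamental class in $H_*(C^{[n]} \times C^{[n+1]})$. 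The lowering operator $\partial_{y_i}$ is defined by the transposed correspondence but with a bidegree shift of $(-1,-2)$, which is realized geometrically by capping with the first Chern class of the tautological line bundle on $Z_i^{[n,n+1]}$ coming from $I_Z/I_{Z'}$.

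The two global operators $\sum_i y_i$ and $\sum_i \partial_{x_i}$ would be constructed using the full nested Hilbert scheme $C^{[n,n+1]}$ (no component restriction), together with, respectively, a multiplication by a tautological divisor class and the purely geometric push-pull. The reason only the totals, and not the individual $y_i$ and $\partial_{x_i}$, appear is structural: the relevant tautological sheaf on $C^{[n]}$ has a rank-one quotient supported on the universal subscheme, whose Chern class is globally defined, but splitting it as a sum of component-wise Chern classes requires local resolutions at singularities and does not descend to $C^{[n]}$ without choices. Only the component-localized classes supported on the smooth loci (yielding $x_i$ and $\partial_{y_i}$) and the global totals (yielding $\sum y_i$ and $\sum \partial_{x_i}$) are canonically defined.

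The main obstacle is verifying the Weyl-algebra relations, principally $[x_i,\sum_j \partial_{x_j}] = \mathrm{id}$ and $[\partial_{y_i},\sum_j y_j]=\mathrm{id}$, together with the commutation of pairs like $x_i,\partial_{y_j}$. This reduces to intersecting pairs of correspondences of the form $Z_i^{[n-1,n]}\times_{C^{[n]}} Z_j^{[n,n+1]}$ and its transpose. Away from the locus where the two added/removed points collide, the intersection is transverse and the two orderings are equal. At the collision locus, one has to apply the excess intersection formula against the relative tangent bundle of the small diagonal inside $C^{[n+1]}$. Planarity of $C$ ensures the local model at each (possibly singular) point is a complete intersection in a smooth surface, so the excess bundle has rank one and its Euler class evaluates to the expected scalar, producing the delta-function contribution responsible for the canonical commutation relations.

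The remaining technical point is compatibility at the singularities of $C$: the closures of the $Z_i^{[n,n+1]}$ for different $i$ can meet over singular points, and one must check that the boundary contributions vanish. The cleanest way to organize this is to observe that the singularities contribute only to nested Hilbert schemes with colliding points on different components, where the Chern class computation producing $[x_i,\partial_{y_j}]$ is automatically zero for $i\neq j$ by support considerations, and for $i=j$ is absorbed by the definition of $\partial_{y_i}$ through the tautological class supported on $C_i^{sm}$.
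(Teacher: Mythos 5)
The construction you propose for $x_i$ has a bidegree error that reflects a genuine divergence from the paper's approach. In \cite{Kiv}, as summarized in the introduction just above the theorem, $x_i$ and $\partial_{y_i}$ are \emph{not} defined via a correspondence over the moving locus $C_i^{sm}$: one fixes a smooth point $c_i\in C_i$ once and for all, and $x_i$ is the proper pushforward along the closed embedding $C^{[n]}\hookrightarrow C^{[n+1]}$, $Z\mapsto Z\cup\{c_i\}$, while $\partial_{y_i}$ is its Gysin pullback. These have bidegrees $(1,0)$ and $(-1,-2)$ essentially for free, since pushforward along a closed embedding preserves Borel--Moore degree and the refined pullback along a codimension-one regular embedding drops it by $2$. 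Your $x_i$, defined by push--pull through the $(n+1)$-dimensional incidence variety $Z_i^{[n,n+1]}\subset C^{[n]}\times C^{[n+1]}$ without capping by any cohomology class, shifts homological degree by $+2$ (relative dimension of the projection to $C^{[n]}$ is one), so it lands in bidegree $(1,2)$. That is the degree of $y_i$, not of $x_i$, so the operator you construct cannot be the generator called $x_i$ in $A_m$. The fix is to cap with the class of a point of $C_i^{sm}$, at which stage you are effectively reduced to the paper's fixed-point construction anyway; the choice of $c_i$ cannot be avoided homologically.

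A second, smaller issue: $Z_i^{[n,n+1]}$ as you define it (support of the new point in $C_i^{sm}$) is open, not closed, in $C^{[n]}\times C^{[n+1]}$, and $C^{[n]}$ is itself singular whenever $C$ is, so the claim that $Z_i^{[n,n+1]}$ is smooth does not hold; at best one has the expected dimension and an lci structure over $C^{[n]}$, which requires a separate argument. Your treatment of $\sum_i y_i$ and $\sum_i\partial_{x_i}$ via the full nested Hilbert scheme $C^{[n,n+1]}$ does match the Nakajima-correspondence description in \cite{Kiv}, and your remark that only the totals are defined is the right structural observation. Your strategy of verifying the canonical commutation relations by excess intersection at the collision locus is in the right spirit, but it becomes much simpler once $x_i$ and $\partial_{y_i}$ are tied to a fixed smooth point $c_i$: the only collision that contributes to $[\sum_j\partial_{x_j},x_i]=\mathrm{id}$ happens at $c_i$, which is a smooth point of $C$, so no delicate analysis at the singularities of $C$ is needed for these particular relations.
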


Roughly speaking, the action on $V$ is given as follows. For a fixed component $C_i$ of $C$, the operator $x_i: V\to V$ adds points, and $\partial_{y_i}$ removes them. These are defined using a choice of a point $c_i\in C_i$ and a corresponding embedding $C^{[n]}\hookrightarrow C^{[n+1]}.$ On the other hand, the operator $\sum_{i}\partial_{x_i}: V\to V$ removes a "floating" point and $\sum_i y_i$ adds a floating point. These are defined as Nakajima correspondences.

The original computation of $T$-equivariant homology of affine Springer fibers in \cite{GKM2} for $G=GL_2$ bears a striking resemblance to the second main result in \cite{Kiv}. 
In particular, if $C$ is the union of two projective lines along a point, $$V\cong \frac{\C[x_1,x_2,y_1,y_2]}{(x_1-x_2)\C[x_1,x_2,y_1+y_2]}.$$ Furthermore, when $G=GL_2$, we have 
$$H_{*,ord}^T(\Sp_{tz})=\frac{\C[x_1^\pm,x_2^\pm,y_1,y_2]}{(x_1-x_2)\C[x_1^\pm,x_2^\pm,y_1+y_2]}.$$ Here $H_{*,ord}^T(-)$ means the Borel construction of ordinary $T$-equivariant homology. See Theorem \ref{thm:gkmmain}  for a more general statement.

Based on computations in \cite{Kiv} and some new examples in Section \ref{sec:cptfdjacobians}, we are 
lead to conjecture the following.  
\begin{conjecture}
Let $C$ be the (unique) compactification with rational components and no other singularities of the curve $\{x^n=y^{dn}\}$. 
Then as a bigraded $A_n$-module, we have 
\begin{equation}
V:=\bigoplus_{m\geq 0} H_*(C^{[m]},\Q)\cong 
\frac{\C[x_1,\ldots, x_n,y_1,\ldots, y_n]}{\sum_{i\neq j}\sum_{k=1}^{d} 
(x_i-x_j)^k \ker(\partial_{y_i}-
\partial_{y_j})^k}.
\end{equation}
\end{conjecture}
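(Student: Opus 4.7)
The plan is to reduce the conjecture to Theorem \ref{thm:intromainthm} through a chain of geometric comparisons: the Hilbert schemes $C^{[m]}$ map by Abel--Jacobi to the compactified Jacobian $\bJac(C)$, which by the product theorem \eqref{eq:producttheorem} together with \cite{MY} is controlled by $\Sp_\gamma/\Lambda$ for $\gamma=zt^d$ with $z=\diag(\zeta^i)_{i=1}^n$, and the Borel--Moore homology of $\Sp_\gamma$ is described by Theorems \ref{thm:intromainthm} and \ref{thm:freeness}. The real target is therefore to upgrade Theorem \ref{thm:bezconj} to arbitrary $d$ and then transport the resulting $A_n$-module structure across Abel--Jacobi.

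First I would pass from $V=\bigoplus_m H_*(C^{[m]})$ to $\bigoplus_\delta H_*(\bJac^\delta(C))$: for $m$ large relative to a fixed Picard component, the restriction of Abel--Jacobi to the corresponding component of $C^{[m]}$ is a projective bundle, contributing a polynomial algebra tensored with the homology of that Picard component. The $A_n$-operators $x_i$, which add a point on the branch $C_i$, shift the Picard multidegree by a point class on $C_i$ and organize the infinitely many Picard components into a single finitely generated $\C[\bx]$-module, while the Nakajima operator $\sum_i y_i$ realizes the Chern class of a tautological line bundle along the projective bundle fibers. Second, combining the product theorem with \cite{MY} and Theorem \ref{thm:intromainthm}, I would identify the result with the $\Lambda$-coinvariants of $\Delta^d H_*^T(\Sp_\gamma)\cong J^{(d)}_{GL_n}$, compatibly with its $\C[\bx,\by]$-module structure and $\Sn$-action; using freeness over $\C[\by]$ from Theorem \ref{thm:freeness}, the whole module becomes a $\C[\by]$-free model built from $J^{(d)}_{GL_n}=\bigcap_{i<j}\langle x_i-x_j,y_i-y_j\rangle^d$.

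To match the quotient presentation on the right-hand side, I would use Macaulay inverse systems: a polynomial $f\in\C[\bx,\by]$ pairs trivially, under $(f,g)\mapsto (f(\partial_\bx,\partial_\by)g)(0)$, with every element of $\langle x_i-x_j,y_i-y_j\rangle^d$ if and only if $f$ lies outside $(x_i-x_j)^k\ker(\partial_{y_i}-\partial_{y_j})^k$ for some $1\le k\le d$, which is exactly the ideal in the conjecture. Poincar\'e duality on the smooth locus of $C^{[m]}$ then converts the ideal presentation of $J^{(d)}_{GL_n}$ into a quotient presentation of $V$, and one only has to check that the geometric action of $A_n$ agrees with the action by multiplication and differentiation inherited from this duality.

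The main obstacle, I expect, lies in the first step: Abel--Jacobi is a projective bundle only on one component of $\bJac(C)$ at a time, so the infinitely many Picard components must be glued coherently and one must verify that no extra relations among the $x_i$'s arise from correspondences crossing between components. A fixed-point computation on $C^{[m]}$ for the natural $\C^*$-action, combined with the Hilbert series of $J^{(d)}$ from \cite{Hai1} (equivalently from \cite{EH,GoHo} on the $(n,dn)$-torus link side), should provide a decisive dimension check. An intermediate milestone is extending Theorem \ref{thm:bezconj} from $d=1$ to arbitrary $d$, which I would attempt via an explicit $\Lambda$-equivariant resolution of $J^{(d)}_{GL_n}$ built from the fixed-point presentation of $H_*^T(\Sp_\gamma)$ arising in the proof of Theorem \ref{thm:intromainthm}.
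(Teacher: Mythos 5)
This statement is a \emph{conjecture} in the paper, not a theorem: the paper offers no proof, only supporting evidence from the examples $\{x^2=y^2\}$, $\{x^3=y^3\}$, $\{x^4=y^2\}$ computed via the Migliorini--Shende--Viviani formula (Theorem \ref{thm:msvthm}) and matched against the Oblomkov--Rasmussen--Shende predictions. So there is no ``paper's own proof'' to compare against, and any proposal has to be judged on its own merits.

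Your proposal as written has a fatal gap in the very first step, and the paper itself tells you where. You assume that for $m$ large the Abel--Jacobi map $C^{[m]}\to\bJac(C)$ restricts to a projective bundle over ``the corresponding component,'' so that $\bigoplus_m H_*(C^{[m]})$ is built out of $\bigoplus_\delta H_*(\bJac^\delta(C))$ by tensoring with polynomial algebras. But the paper is explicit that this fails precisely in the situation of the conjecture: for non-elliptic (equivalently, locally reducible) $\gamma$ ``there is no such stabilization,'' and $AJ$ is only known to be a dominant map onto a union of irreducible components of the compactified Picard, of which there are infinitely many in each connected component. The correct replacement, which the paper uses for its numerical checks, is the Maulik--Yun / Migliorini--Shende--Viviani identity. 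That identity, however, only gives a relation of weight-graded Poincar\'e series (or of perverse/weight-graded objects in $D_c^b(B)[[q]]$), not an $A_n$-module isomorphism. Passing from numerical agreement to the claimed bigraded module isomorphism is the whole content of the conjecture, and your proposal silently assumes it.

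Two further problems. First, taking $\Lambda$-coinvariants of $H_*(\Sp_\gamma)$ is not the same as computing $H_*(\Sp_\gamma/\Lambda)$; there is a spectral sequence relating them, and the degeneration needed to identify the two is not addressed (Theorem \ref{thm:bezconj} for $d=1$ works by a separate algebraic identification with $DR_n^{sgn}$, not by a general coinvariants-equals-homology-of-quotient argument). Second, the Macaulay inverse systems step is not well-formed: ``$f$ lies outside $(x_i-x_j)^k\ker(\partial_{y_i}-\partial_{y_j})^k$ for some $1\le k\le d$'' does not describe membership in the stated ideal, and it is not clear that the pairing you write induces a duality between the ideal $J^{(d)}=\bigcap_{i<j}\langle x_i-x_j,y_i-y_j\rangle^d$ and the quotient on the right-hand side of the conjecture. (There \emph{is} a suggestive parallel with Theorem \ref{thm:gkmmain}, which presents $H^T_{*,ord}(\Sp_\gamma)$ as a quotient of $\C[\Lambda]\otimes\C[\ft]$ by a closely analogous submodule; if you want to pursue this, the right starting point is to make that duality precise, rather than Macaulay duality of an arbitrary ideal.) In short: the reduction to known results breaks at the Abel--Jacobi step, and even if repaired to a numerical statement it does not reach the level of a bigraded $A_n$-module isomorphism, which is exactly why the paper leaves this as a conjecture.
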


\subsection{Organization}

The organization of the paper is as follows. In Section 
\ref{sec:affinespringer} we give background on affine Springer 
fibers. In Section \ref{sec:BMhomology} we compute the 
torus equivariant Borel-Moore homology of the affine Springer fibers we are 
interested in, following Goresky-Kottwitz-MacPherson and Brion. In Section 
\ref{sec:isospectral}, we give background on Hilbert schemes of points 
on the plane and relate results from the previous sections with those of Haiman. We also discuss our results and their implications in this direction for arbitrary $G$ in Section \ref{sec:isospectral-othertypes}. In Section \ref{sec:braids}, we relate the equivariant Borel-Moore homology of affine Springer fibers with braid theory, and in the type A case with the knot homology theories of Khovanov-Rozansky and Gorsky-Hogancamp.
Finally, in 
Section \ref{sec:cptfdjacobians} we compute some new examples and make a conjecture describing the structure 
of the homology of Hilbert schemes of points on the plane curves $\overline{\{x^n=y^{dn}\}}$.

\subsection*{Acknowledgements}
I would like to thank Erik Carlsson, Victor Ginzburg, Eugene Gorsky, Mark Haiman,
Matthew Hogancamp, Alexei Oblomkov, Hiraku Nakajima, Peng Shan, Eric Vasserot and 
Zhiwei Yun for interesting discussions on 
the subject. Special thanks to Roman Bezrukavnikov for suggesting the 
reference \cite{GKM2}. Additionally, I would like to thank the MSRI for 
hospitality, for most of the work was completed there during Spring 
2018.
This research was supported by the NSF grants DMS-1700814  and 
DMS-1559338, as well as the Ville, Kalle and Yrj\"o V\"ais\"al\"a 
foundation of the Finnish Academy of Science and Letters. 
\section{Affine Springer fibers}
\label{sec:affinespringer}
\label{sec:affinespringer-defs}
In this section, we define the affine Springer fibers we are 
considering. For more details on the definitions, see the notes of Yun 
\cite{Yun}.
Let $G$ be a connected reductive group over $\C$. Choose $T\subset B
\subset G$ a maximal torus and a Borel subgroup as per usual. We denote 
the Lie algebras of $G,B,T$ respectively by $\fg,\fb,\ft$. 

 Denote the lattice of cocharacters $X_*(T)=\Lambda$ and the Weyl group 
$W$. Let the extended affine Weyl group be $\widetilde{W}:=\Lambda\rtimes W$. We use this convention to align with \cite{GKM2}. 

If $R$ is a $\C$-algebra and $F$ represents an fpqc sheaf out of $
\text{Aff}/\C$, we let $F(R)$ be the associated functor of points 
evaluated at $R$ (for an excellent introduction to these notions in the 
context we are interested in, see notes of Zhu \cite{Zhu}). Often when $R=\C$, we omit 
it from the notation and simply refer by $F$ to the closed points.

Denote the affine Grassmannian of $G$ by $\Gr_G$ and its affine flag 
variety by $\Fl_G$. These are naturally ind-schemes. If $G=GL_n$, we 
will often write just $\Gr_n$ and $\Fl_n$. Write $\cK=\C((t))$ and $
\cO=\C[[t]]$. Then $\Gr_G(\C)=G(\cK)/G(\cO)$ and $\Fl(\C)=G(\cK)/\bI$, 
where $\bI$ is the Iwahori subgroup corresponding to the choice of $B$ 
and the uniformizer $t$. Let $\tT:=T\rtimes \G_m^{rot}$ be the extended torus, where $a\in \G_m^{rot}$ scales $t$ by $t\mapsto at$.

There is a left action of $T(\C)$ on $\Gr_G(\C)$ and $\Fl_G(\C)=G(\cK)/
\textbf{I}$. This action is topological in the analytic topology. Its 
fixed points are determined using the following Bruhat decompositions: 
$$G(\cK)=\bigsqcup_{\lambda\in \Lambda} \bI t^\lambda G(\cO)=
\bigsqcup_{w\in \tW} \bI t^w \bI.$$
Since $T(\C)$ acts nontrivially on the real affine root spaces in $\bI$, and fixes the cosets $t^\lambda G(\cO)$, $t^w\bI$ respectively, we see that the 
fixed point sets are discrete, and in a natural bijection with $
\Lambda, \tW$. 

\begin{definition}
Let $\gamma\in \Lie(G)\otimes_\C \cK$. The affine Springer fibers $\Sp_
\gamma\subset \Gr_G$ and $\tSp_\gamma\subset \Fl_G$ are defined as the 
reduced sub-ind-schemes of $\Gr_G$ and $\Fl_G$ whose complex points are 
given by 
$$\Sp_\gamma(\C)=\{gG(\cK)|g^{-1}\gamma g\in \Lie(G)\otimes_\C \cO\}$$
$$\tSp_\gamma(\C)=\{g\textbf{I}|g^{-1}\gamma g\in \Lie(\textbf{I})\}.$$
\end{definition}

\section{Equivariant Borel-Moore homology of affine Springer fibers}
\label{sec:BMhomology}
In this section, we prove the main theorem of this paper, Theorem \ref{thm:mainthm}. We thank Eric Vasserot and Peng Shan for pointing out a mistake in the previous formulation and proof of Lemma \ref{lem:sl2}. They have informed us that they have independently found a similar solution to the issue.

\subsection{Borel-Moore homology}
We now review equivariant Borel-Moore homology. The paper \cite{Bri} is the main reference for this section.
For a projective (but not necessarily irreducible) variety $X$, one defines the Borel-Moore homology as 
$H_*(X):=H^{-*}(X,\omega_X),$ where $\omega_X$ is the Verdier dualizing complex in $D^b_c(X)$. Note that we use $H_*(-)$ for {\em Borel-Moore} homology, not the usual singular or {\'e}tale homologies.

For a $T$-variety $X$, where $T\cong \G_m^n$ is a diagonalizable torus, imitating the Borel construction of equivariant (co)homology is not completely straightforward, as the classifying space $BT$ is not a scheme-theoretic object. However, using approximation by $m$-skeleta as in \cite{Bri}, or a simplicial resolution of $BT$ as in \cite{BeLu}, one gets around the issue by defining $$H_k^T(X):=H_{k+2mn}(X\times^T ET_{m}), \; m\geq \dim X-k/2.$$
Here $ET_m:=(\C^{m+1}-0)^d$ with the $T$-action $(t_1,\ldots, t_d)\cdot(v_1,\ldots,v_d)=(t_1v_1,\ldots,t_dv_d)$. This action is free, and the quotient $ET_m\to (\P^m)^d$ is a principal $T$-bundle.

The above definition of $H_k^T(X)$ is independent of $m$ as follows from the Gysin isomorphism $H_{k+2m'n}(X\times^T ET_{m'})\to H_{k+2mn}(X\times^TET_{m})$ for $m'\geq m\geq \dim X-k/2$. Note that $H_*^T(X)$ is a graded module over $H^*_T(X)$ via the cap product and in particular a graded module over $H_*^T(pt)$.

Recall that $X$ is {\em equivariantly formal} (see \cite{GKM1, GKM2}) if the Leray spectral sequence 
$$H^p(BT,H^q(X))\Rightarrow H_T^{p+k}(X)$$ degenerates at $E_2$.
If $X$ is equivariantly formal, then $H_*^T(X)$ is a {\em free} $H_T^*(pt)$-module \cite[Lemma 2]{Bri}.

The above definition of $H_*^T(-)$ enjoys some of the usual localization properties, as studied e.g. in \cite{Bri}.
For example, we have an ''Atiyah-Bott" formula \cite[Lemma 1]{Bri}. 
\begin{theorem}
Suppose the $T$-action on $X$ has finitely many fixed points. Let $i_*: H_*^T(X^T)\to H_*(X)$ be the $\C[\ft]$-linear map given by the inclusion of the fixed-point set to $X$. Then $i_*$ becomes an isomorphism after inverting finitely many characters of $T$.
\end{theorem}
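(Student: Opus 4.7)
The plan is to follow Brion's Atiyah--Bott--type localization argument, adapted to equivariant Borel-Moore homology. The strategy combines the long exact sequence of equivariant BM homology for the closed--open decomposition around $X^T$ with a vanishing principle: equivariant BM homology of a $T$-variety with no fixed points is torsion over $\C[\ft]$, with support contained in finitely many hyperplanes of $\ft$.

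Since $X^T$ is finite and hence closed in $X$, the complement $U := X \setminus X^T$ is an open $T$-invariant subvariety with $U^T = \emptyset$. The closed--open long exact sequence in equivariant BM homology then reads
\begin{equation*}
\cdots \to H_{k+1}^T(U) \to H_k^T(X^T) \xrightarrow{i_*} H_k^T(X) \to H_k^T(U) \to H_{k-1}^T(X^T) \to \cdots,
\end{equation*}
so it suffices to exhibit a finite collection of characters $\chi_1,\ldots,\chi_N \in \ft^*$ whose product annihilates $H_*^T(U)$ after localization.

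To establish this, I would stratify $U$ into finitely many $T$-invariant locally closed subvarieties $\{S_\alpha\}$ on each of which the stabilizer is a constant proper subtorus $T_\alpha \subsetneq T$; such a stratification exists for any algebraic $T$-action on a variety of finite type. Iterating the long exact sequence across the stratification reduces the claim to producing, for each $\alpha$, a character $\chi_\alpha$ of $T$ acting nilpotently (in fact by zero) on $H_*^T(S_\alpha)$. Since $T_\alpha$ acts trivially on $S_\alpha$ while $T/T_\alpha$ acts freely, a fiber-bundle argument on the approximations $S_\alpha \times^T ET_m \simeq BT_{\alpha,m} \times S_\alpha/(T/T_\alpha)$ yields an identification of graded $\C[\ft]$-modules, up to shift,
\begin{equation*}
H_*^T(S_\alpha) \cong H_*^{T_\alpha}(\pt) \otimes_\C H_*(S_\alpha/(T/T_\alpha)).
\end{equation*}
Under this identification, any character $\chi \in (\ft/\ft_\alpha)^* \subset \ft^*$, i.e., a character of $T$ trivial on $T_\alpha$, acts via an equivariant Chern class on the finite-dimensional second factor and as zero on the first, hence nilpotently on the whole; any nonzero such $\chi_\alpha$ serves.

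The main obstacle, though mostly technical, is making the fiber-bundle approximation and K\"unneth identification precise in the Borel-Moore setting -- keeping track of the degree shifts in Brion's definition and verifying compatibility of the $\C[\ft]$-module structure with the product decomposition. Once these are in place, inverting the finite multiplicative set generated by $\{\chi_\alpha\}$ kills $H_*^T(U)$, and the long exact sequence then promotes $i_*$ to an isomorphism $H_*^T(X^T) \xrightarrow{\sim} H_*^T(X)$ after localization, completing the proof.
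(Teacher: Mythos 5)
The paper offers no proof here: it simply records the statement as \cite[Lemma 1]{Bri}. Your stabilizer-stratification argument is the standard one and is, in substance, how Brion establishes this lemma, so there is nothing genuinely divergent to compare; the plan is sound (open-closed long exact sequence around $X^T$, finiteness of stabilizer types on a finite-type $T$-variety, K\"unneth over the trivially-acting part, nilpotence of the equivariant Chern class on the bounded Borel--Moore homology of the free quotient, iterate and localize). One small inaccuracy: the parenthetical ``(in fact by zero)'' is an overclaim --- a character $\chi_\alpha$ trivial on $T_\alpha$ acts on $H_*^T(S_\alpha)$ as $1\otimes c_1(\chi_\alpha)$, which is a nonzero but nilpotent operator in general --- but your subsequent reasoning correctly invokes only nilpotence, so the argument is unaffected.
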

From the perspective of commutative algebra, it is useful to note the following from \cite[Proposition 3]{Bri}.
\begin{proposition}
\label{prop:dualizingmodule}
If $X$ is equivariantly formal, then $$H^T_*(X)\cong \Hom_{\C[\ft]}(H^*_T(X),\C[\ft]).$$
The map is given by $$\alpha \mapsto (\beta\mapsto p_{X*}(\beta\cap \alpha)),$$ where $p_X: X\times^T ET\to BT$ is the projection.
\end{proposition}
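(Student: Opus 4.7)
The plan is to reduce the statement to the non-equivariant Poincaré--Verdier duality by exploiting equivariant formality, which makes both source and target free over $\C[\ft]$ and enables a Nakayama-type argument.

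First, I would verify that the map $\Phi\colon \alpha \mapsto (\beta \mapsto p_{X*}(\beta\cap \alpha))$ is well-defined and $\C[\ft]$-linear. The $T$-equivariant cap product is bilinear and satisfies $(f\beta)\cap \alpha = f(\beta\cap \alpha)$ for $f\in \C[\ft] = H^*_T(pt)$, so that $\Phi(\alpha)$ is $\C[\ft]$-linear in $\beta$; linearity of $\Phi$ in $\alpha$ is clear from bilinearity of $\cap$. Thus $\Phi$ genuinely lands in $\Hom_{\C[\ft]}(H^*_T(X),\C[\ft])$.

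Next, exploit equivariant formality. By definition it gives $H^*_T(X)\cong H^*(X)\otimes_\C \C[\ft]$ as graded $\C[\ft]$-modules; the analogous argument applied to the Leray spectral sequence for BM homology (or, equivalently, via Verdier duality on the $m$-skeleta $X\times^T ET_m$) yields $H_*^T(X)\cong H_*(X)\otimes_\C \C[\ft]$, consistent with the freeness already recorded in the excerpt. Since $H^*(X)$ is finite-dimensional in each degree, the target $\Hom_{\C[\ft]}(H^*_T(X),\C[\ft])\cong \Hom_\C(H^*(X),\C)\otimes_\C\C[\ft]$ is likewise free. With both sides free graded $\C[\ft]$-modules, a graded Nakayama argument reduces the problem to showing that $\Phi\otimes_{\C[\ft]}\C$ is an isomorphism, where we mod out by the augmentation ideal of $\C[\ft]$.

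Finally, I would identify this base change with the classical cap-product pairing: $\Phi\otimes_{\C[\ft]}\C$ sends $\alpha\in H_k(X)$ to $\beta\mapsto p_*(\beta\cap \alpha)$, and since $p_*\colon H_0(X)\to \C$ is the augmentation, this is exactly the Kronecker pairing $\langle \beta,\alpha\rangle$. Its nondegeneracy is the non-equivariant Poincaré--Verdier duality $H_*(X)\cong \Hom_\C(H^*(X),\C)$, which, given $H_*(X) = H^{-*}(X,\omega_X)$, follows from the duality between $\omega_X$ and the constant sheaf on $X$. The main obstacle is the second step: verifying the BM-homological incarnation of equivariant formality and checking that the specialization of $\Phi$ really is the standard non-equivariant pairing without sign or grading mismatches. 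This amounts to unwinding how the cap product and the pushforward along $p_X\colon X\times^T ET\to BT$ interact with the isomorphisms supplied by the degeneration of the Leray spectral sequences, which is routine but requires fixing conventions carefully.
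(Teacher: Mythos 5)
The paper gives no proof of this proposition, simply citing Brion \cite[Proposition 3]{Bri}, and your sketch reconstructs exactly Brion's argument: equivariant formality makes both sides free over $\C[\ft]$ with identified fibers, so a graded Nakayama reduction brings it down to the non-equivariant cap-product (Kronecker) duality on the compact variety $X$. Your acknowledged loose end, namely verifying that the specialization of $\alpha\mapsto(\beta\mapsto p_{X*}(\beta\cap\alpha))$ modulo $\C[\ft]_+$ really is the ordinary pairing with the correct degree conventions, is indeed the only bookkeeping left, and it is routine.
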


Another localization theorem was proved in \cite[Theorem 7.2]{GKM1} for $T$-equivariant (co)homology. As in \cite[Corollary 1]{Bri}, it is translated to Borel-Moore homology as follows.

\begin{proposition}
\label{localization}
Let $X$ be an equivariantly formal $T$-variety containing only finitely many orbits of dimension $\leq 1$. Then $H^T_*(X)\cong i_*^{-1} H_*^T(X) \subset H_*^T(X^T)\otimes \C(\ft)$ consists of all tuples $(\omega_x)_{x\in X^T}$ of rational differential forms on $\ft$ satisfying the following conditions. 
\begin{enumerate}
\item The poles of each $\omega_x$ are contained in the union of singular hyperplanes and have order at most one. Recall that a {\em singular hyperplane} in $\ft$ is the vanishing set of $d\chi$, where $X^{\ker\chi}\neq X^T$ and $\ker\chi$ is the codimension one subtorus of $T$ defined by $\chi$.
\item For any singular character $\chi$ and for any connected component $Y$ of $X^{\ker \chi}$, we have $$\Res_{\chi=0}\left(\sum_{x\in Y^T}\omega_x\right)=0.$$
\end{enumerate}
\end{proposition}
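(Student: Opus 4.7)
The plan is to deduce this statement from the cohomological GKM theorem \cite[Theorem 7.2]{GKM1} together with Brion's duality, which was recorded above as Proposition \ref{prop:dualizingmodule}. In outline, one dualizes the explicit image description of the restriction $H^*_T(X)\hookrightarrow H^*_T(X^T)$ to obtain the image description of the localization $H_*^T(X)\hookrightarrow H_*^T(X^T)\otimes\C(\ft)$, and then reinterprets the resulting $\C[\ft]$-linear constraints as residue conditions on rational top differential forms on $\ft$. This is essentially the argument carried out in \cite{Bri}.

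First I would invoke the cohomological GKM statement: under our hypotheses, a tuple $(f_x)\in\bigoplus_{x\in X^T}\C[\ft]$ lies in the image of $H^*_T(X)$ if and only if, for every singular character $\chi$ and every connected component $Y$ of $X^{\ker\chi}$, the restrictions $f_x$ for $x\in Y^T$ all agree modulo $\chi$. Each such $Y$ is itself equivariantly formal for the one-dimensional quotient $T/\ker\chi$, which is what makes the inductive proof of the cohomological GKM statement possible. Now apply Proposition \ref{prop:dualizingmodule}: a tuple $(\omega_x)\in H_*^T(X^T)\otimes\C(\ft)$ represents a class of $H_*^T(X)$ precisely when the pairing $(f_x)\mapsto\sum_x f_x\omega_x$ takes values in $\C[\ft]$ for every GKM-compatible $(f_x)$. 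A local analysis near each singular hyperplane $\{\chi=0\}$, using test classes supported on a single fixed point up to the minimally required edge relations, forces the poles of each $\omega_x$ to lie only along singular hyperplanes and moreover to be at most simple, which is condition (1). Condition (2) then follows from residue calculus: testing against $(f_x)$ equal to $1$ on $Y^T$ and $0$ elsewhere (which is GKM-compatible modulo $\chi$) and extracting the coefficient of $\chi^{-1}$ in the pairing yields $\Res_{\chi=0}\bigl(\sum_{x\in Y^T}\omega_x\bigr)=0$.

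The hard part will be the converse: showing that any tuple satisfying (1) and (2) actually lies in the image of $H_*^T(X)$, rather than merely being orthogonal to one particular family of test classes. Here one must expand an arbitrary GKM-compatible $(f_x)$ into pieces adapted to each singular hyperplane and verify, via partial fractions in the $\chi$ direction, that all non-polynomial contributions to $\sum_x f_x\omega_x$ cancel precisely because of the residue relations summed over each component $Y$. The finiteness hypothesis on $1$-dimensional orbits is essential here: it ensures both that the singular hyperplane arrangement is finite (so the partial-fraction decomposition terminates) and that each component $Y$ is small enough to be controlled directly by a single-character localization. The rest of the argument is the routine translation of \cite[Theorem 7.2]{GKM1} into Borel-Moore homology along the lines of \cite[Corollary 1]{Bri}.
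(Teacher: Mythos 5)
Your proposal is correct and follows essentially the same route as the paper: the paper simply cites \cite[Theorem 7.2]{GKM1} together with \cite[Corollary 1]{Bri}, and what you sketch (dualizing the cohomological GKM image description through Proposition \ref{prop:dualizingmodule} and reinterpreting the integrality constraints as pole-order and residue conditions) is precisely the content of Brion's Corollary 1. The only difference is that the paper defers to the reference rather than spelling out the partial-fractions argument for the converse inclusion, which you correctly flag as the nontrivial step.
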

As the number of orbits of dimension $\leq 1$ is finite, and the closure of each one-dimensional orbit contains exactly two fixed points (see \cite{GKM1}), it is natural to form the graph whose vertices are the fixed points and edges correspond to one-dimensional orbits. We call the associated weighted graph whose edges are labeled by the differentials $d\chi$ of singular characters the {\em GKM graph}. 

Note that it is easy to recover $H_*(X)$ from $H_*^T(X)$ for equivariantly formal varieties by freeness, as shown in \cite[Proposition 1]{Bri}. Namely, we have
\begin{proposition}
Let $T'\subset T$ be a subtorus. Then 
$$H_*^{T'}(X)\cong \frac{H^T_*(X)}{\text{Ann}(\ft')\cdot H^*},$$
where $\text{Ann}(\ft')\subset \C[\ft]$ is the annihilator of $\ft'=\Lie(T')$.
In particular, when $T'$ is trivial, we get
$$H_*(X)=\frac{H_*^T(X)}{\C[\ft]_+ H_*^T(X)}.$$
\end{proposition}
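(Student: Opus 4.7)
The plan is to reduce the statement to the elementary algebraic fact that $\C[\ft] \twoheadrightarrow \C[\ft']$ is surjective with kernel $\text{Ann}(\ft')$, by exploiting equivariant formality. First, I would construct a natural change-of-groups map $\phi\colon H_*^T(X) \to H_*^{T'}(X)$. Using Brion's finite-dimensional approximations, the forgetful map $X \times^{T'} ET_m \to X \times^T ET_m$ is a smooth morphism with fibers isomorphic to $T/T'$ (as $ET_m$ is contractible and carries a free $T'$-action, so it also models $ET'$). Smooth pullback in Borel--Moore homology, shifted by the relative complex dimension $\dim T - \dim T'$, defines $\phi$, and the standard Gysin argument shows this is independent of $m$ for $m$ large.

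Next I would check that equivariant formality transfers from $T$ to $T'$. The composition $H^*_T(X) \to H^*_{T'}(X) \to H^*(X)$ is surjective by $T$-formality, hence so is the second factor $H^*_{T'}(X) \to H^*(X)$, which is equivalent to $E_2$-collapse of the Leray spectral sequence for $T'$. Applying Proposition \ref{prop:dualizingmodule} to both tori then provides canonical identifications $H_*^T(X) \cong H_*(X) \otimes_\C \C[\ft]$ and $H_*^{T'}(X) \cong H_*(X) \otimes_\C \C[\ft']$ as graded modules (up to the standard degree shift), with $H_*^T(X)$ free over $\C[\ft]$.

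The key step is to verify that under these identifications, $\phi$ corresponds to $\id_{H_*(X)} \otimes \mathrm{res}$, where $\mathrm{res}\colon \C[\ft] \twoheadrightarrow \C[\ft']$ is restriction of polynomial functions. This is the main technical point, and it follows from naturality of the cap product together with the characterization in Proposition \ref{prop:dualizingmodule}: both Borel--Moore homology groups are pinned down by their $\C[\ft]$-, resp.\ $\C[\ft']$-linear pairings with equivariant cohomology, and on the cohomology side the pullback $H^*_T(X) \to H^*_{T'}(X)$ is unambiguously the restriction map, so dualizing gives the asserted form of $\phi$. Once this is established, the conclusion is immediate: since $H_*^T(X)$ is free over $\C[\ft]$ and $\ker(\mathrm{res}) = \text{Ann}(\ft')$, tensoring preserves surjectivity and computes the kernel as $\text{Ann}(\ft') \cdot H_*^T(X)$. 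Specializing to $T' = \{1\}$ turns $\mathrm{res}$ into the augmentation $\C[\ft] \twoheadrightarrow \C$ with kernel $\C[\ft]_+$, yielding $H_*(X) = H_*^T(X)/\C[\ft]_+ \cdot H_*^T(X)$.
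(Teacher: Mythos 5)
The paper does not prove this statement itself --- it is quoted from Brion \cite[Prop.~1]{Bri} --- so there is no internal proof to compare against. Your sketch follows the route one would expect and is essentially sound, but two steps are stated more confidently than they are established. First, Proposition~\ref{prop:dualizingmodule} only identifies $H_*^T(X)$ with the restricted $\C[\ft]$-dual of $H^*_T(X)$; to rewrite this as $H_*(X)\otimes_\C\C[\ft]$ you must choose a $\C$-linear splitting of $H^*_T(X)\twoheadrightarrow H^*(X)$ (available by formality, but not canonical), and for your claim that $\phi$ ``is'' $\id\otimes\mathrm{res}$ to typecheck, the splittings chosen for $T$ and $T'$ must be compatible under the restriction $H^*_T(X)\to H^*_{T'}(X)$. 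Second, the identification of the geometric Gysin map $\phi$ with the algebraic base-change map on duals is the genuine content of the proof; invoking ``naturality of the cap product'' needs to be cashed out as a projection-formula compatibility between the smooth pullback along $X\times^{T'}ET_m\to X\times^T ET_m$ and the pairing $\alpha\mapsto(\beta\mapsto p_{X*}(\beta\cap\alpha))$ of Proposition~\ref{prop:dualizingmodule}.

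Neither issue is fatal, but a shorter route avoids both. The map $\phi$ is $\C[\ft]$-linear when $\C[\ft]$ acts on $H_*^{T'}(X)$ through $\mathrm{res}\colon\C[\ft]\twoheadrightarrow\C[\ft']$, so it automatically kills $\text{Ann}(\ft')\cdot H_*^T(X)$ and descends to $\bar\phi\colon H_*^T(X)/\text{Ann}(\ft')\to H_*^{T'}(X)$. Both sides are free graded $\C[\ft']$-modules: the source because $H_*^T(X)$ is free over $\C[\ft]\cong\C[\ft']\otimes\C[\ft/\ft']$, the target by the formality transfer you proved. Modulo $\C[\ft']_+$ both reduce to $H_*(X)$ and $\bar\phi$ induces the identity, so graded Nakayama (in each finite-dimensional graded piece, using the ind-scheme filtration) gives that $\bar\phi$ is an isomorphism --- with no need to choose splittings or to identify $\phi$ pointwise with $\id\otimes\mathrm{res}$.
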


Ultimately, we are interested in the equivariant Borel-Moore homology of the ind-projective varieties $\Sp_{t^dz}$. Suppose now that $X=\varinjlim X_i$ is an ind-scheme over $\C$ given by a diagram $$X_0\subset X_1\subset X_2\subset \cdots$$ where the maps are $T$-equivariant closed immersions and each $X_i$ is projective. By properness and the definition of $H_*^T(-)$, there are natural pushforwards  
$$H_*^T(X_i)\to H_*^T(X_{i+1}),$$ using which we
{\em define} $$H_*^T(X):=\varinjlim H_*^T(X_i).$$ The usual (non-equivariant) Borel-Moore homology is defined similarly. Note that since the $X_i$ are varieties we are still abusing notation and mean $X_i(\C)$ when taking homology.

\begin{remark}
While $H_*(-)$ and $H_*^T(-)$ could be defined for any finite-dimensional locally compact, locally contractible and $\sigma$-compact topological space $X$ using the sheaf-theoretic definition \cite[Corollary V.12.21.]{Bre}, it is {\em not} true that this definition gives the same answer for $X(\C)$ as the above definition (there's always a map in one direction). For example, if $X(\C)=\varinjlim [-m,m]\cong \Z$ is the colimit of the discrete spaces $[-m,m]\subset \Z$, which are of course also the $\C$-points of a disjoint union of $2m+1$ copies of $\A^0$, then $H^{-*}(X,\omega_X)\cong \C^{\Z}$ is the homology of the one-point compactification of $\Z$ with the cofinite topology, while treating $X$ as an ind-variety we get $H_*(X)\cong \C^{\oplus \Z}$. 
\end{remark}

Call a $T$-ind-scheme $X$ {\em equivariantly formal} if each $X_i$ is equivariantly formal and $T$-stable. Call it {\em GKM} if each $X_i$ has finitely many orbits of dimension $\leq 1$. We have the following corollary to Theorem \ref{localization}.

\begin{corollary}
\label{cor:localization}
Let $X$ be an equivariantly formal GKM $T$-ind-scheme. Then 
$H_*^T(X)\subset H_*^T(X^T)\otimes \C(\ft)$ consists of all tuples 
$(\omega_x)_{x\in X^T}$ of rational differential forms on $\ft$ satisfying the conditions in Theorem \ref{localization}.
\end{corollary}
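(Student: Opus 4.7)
The plan is to deduce the corollary by passing to the direct limit in the GKM localization of Proposition~\ref{localization}, applied to each finite-type stratum $X_i$. The key inputs are (i) the definition $H^T_*(X)=\varinjlim H^T_*(X_i)$ along the proper, $T$-equivariant closed immersions; (ii) compatibility of localization with pushforward, so that the inclusion $X_i^T\subset X_{i+1}^T$ corresponds under the localization injections to extension by zero on the new fixed points; and (iii) a finite-support argument showing that the $X$-conditions detect membership in $H^T_*(X_j)$ for some sufficiently large $j$.

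First I would apply Proposition~\ref{localization} to each projective $X_i$, obtaining injections $H^T_*(X_i)\hookrightarrow \bigoplus_{x\in X_i^T}\C(\ft)$ whose image is cut out by the level-$i$ pole/residue conditions. Because the pushforward along a closed immersion sends the fundamental class of a fixed point to the fundamental class of that same point on the target, these injections intertwine the transition maps $H^T_*(X_i)\to H^T_*(X_{i+1})$ with extension by zero on $X_{i+1}^T\setminus X_i^T$. Taking the colimit gives an injection $H^T_*(X)\hookrightarrow \bigoplus_{x\in X^T}\C(\ft)$ whose image consists precisely of those finitely supported tuples that lie in $H^T_*(X_j)$ for some $j$.

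It then remains to identify this image with the tuples satisfying the $X$-conditions. In one direction, every singular hyperplane of $X_j$ is singular for $X$, and every connected component of $X^{\ker\chi}$ is a union of components of $X_j^{\ker\chi}$, so the $X_j$-residue condition implies the $X$-residue condition. For the converse, given $\omega$ with finite support $S\subset X^T$ satisfying the $X$-conditions, I would choose $j$ large enough so that (a) $S\subset X_j^T$; (b) $X_j$ contains a 1-dimensional orbit realizing each singular character at which some $\omega_x$ has a pole; and (c) for every pair $x,y\in S$ lying in a common component of $X^{\ker\chi}$, a path of 1-dimensional orbits in $X^{\ker\chi}$ joining $x$ and $y$ already lies in $X_j$. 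Each requirement (a)--(c) is finite since $\omega$ has finite support, so such a $j$ exists. For this $j$, each connected component of $X_j^{\ker\chi}$ meeting $S$ has the same intersection with $S$ as its ambient component in $X^{\ker\chi}$, and components disjoint from $S$ contribute trivially; the pole condition is inherited from (b). Hence $\omega\in H^T_*(X_j)$ by Proposition~\ref{localization}, proving the converse.

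The main obstacle is step (iii): a connected component of $X^{\ker\chi}$ can in principle involve infinitely many 1-dimensional orbits, so one must argue that only finitely many of them are needed to certify the residue vanishing for a given $\omega$. This is exactly where the finite support of Borel--Moore classes in the colimit, together with the GKM hypothesis that each $X_i$ has finitely many orbits of dimension $\leq 1$, is used to produce the finite connecting paths in (c) inside a single level of the filtration.
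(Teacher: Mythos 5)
Your first two paragraphs (passing to the colimit, noting that the transition maps correspond to extension by zero on fixed-point sets, and observing that any class in $H_*^T(X)$ lifts to some $H_*^T(X_j)$, hence satisfies the $X_j$-conditions, which in turn imply the $X$-conditions) reproduce the paper's proof, which is terser but follows the same route and does not explicitly isolate the ``$X_j$-conditions imply $X$-conditions'' step. Note, however, that the paper's proof stops there: it establishes only the inclusion of $H_*^T(X)$ into the space of tuples satisfying the GKM conditions, and does not address the reverse containment at all.

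Your third paragraph attempts that reverse inclusion, and there you have correctly put your finger on a genuine issue without closing it. To pass from the $X$-conditions on a finitely supported tuple $\omega$ to the $X_j$-conditions at some finite level $j$, you need your condition (c): any two support points of $\omega$ lying in the same component $Y$ of $X^{\ker\chi}$ must already lie in the same component of $X_j^{\ker\chi}$. To arrange this you propose to include ``a path of $1$-dimensional orbits'' joining the two points inside $X^{\ker\chi}$, but the existence of such a path presupposes that the $1$-skeleton of each component $Y$ of $X^{\ker\chi}$ is connected. That is an extra hypothesis not contained in the definition of ``equivariantly formal GKM $T$-ind-scheme'' as given in the paper. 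It does hold for the affine Springer fibers actually under consideration (where each $X^{\ker\chi}$ is a rank-one unramified affine Springer fiber, a chain of Spaltenstein varieties whose $1$-skeleton is connected), but the corollary is phrased in greater generality. Without it, one cannot rule out that a component $Y$ of $X^{\ker\chi}$ contains two support points which are never joined by a finite chain of $1$-dimensional orbits inside any $X_j^{\ker\chi}$; the $X$-residue condition on the full sum over $Y^T$ would then not yield the $X_j$-residue conditions on the separate pieces. Since both containments are used later (for instance in the proof of Theorem~\ref{thm:mainthm}, where $H_*^T(X_d)$ is identified with $\bigcap_\alpha H_*^T(X_d^\alpha)$), this point deserves to be addressed explicitly, either by adding the $1$-skeleton connectivity to the hypotheses or by verifying it for the ind-schemes in question.
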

\begin{proof}
By assumption, we have inclusions of $T$-fixed points 
$X_i^T\to X_{i+1}^T$ and their union is $X^T$. 
Taking the colimit of $H_*^T(X_i)\hookrightarrow H_*^T(X_i^T)$, we get by exactness
$$\iota: H_*^T(X):=\varinjlim H_*^T(X_i)\hookrightarrow  \varinjlim H_*^T(X_i^T)=:H_*^T(X^T),$$
which becomes an isomorphism when tensoring with $\C(\ft)$. Any tuple $(\omega_x)_{x\in X^T}$ of rational differential forms (of top degree) on $\ft$ inside $\iota^{-1}_*H_*^T(X)$ has some $i$ such that it is in the image of $\iota^{-1}_* H^T(X_i)$. By Proposition \ref{localization}, it therefore satisfies the desired conditions.
\end{proof}
\begin{remark}
While the number of fixed points and one-dimensional orbits might now be infinite, we may still form the (possibly infinite) GKM graph.
\end{remark}

\subsection{The $SL_2$ case}
\label{sec:sl2}
We first prove Theorem \ref{thm:mainthm} in the case $G=SL_2$. Recall that $\tT=T(\C)\times \C^*\subset G((t))$ denotes the extended torus. As shown in \cite[Lemma 6.4]{GKM2}, for $G=SL_2$ the one-dimensional $\tT$-orbits of $X_d:=\Sp_{t^dz}$ are given as follows. If we identify $\Sp_{t^dz}^{\tT}=\Z$, then there is an orbit between $a,b\in \Z$ if and only if $|a-b|\leq d$. Moreover, $\tT$ acts on this orbit through the character (in fact, real affine root) $(\alpha, a+b)\in X_*(\tT)\cong \Lambda\times \Z$. Identify further the differential of this character by $y+(a+b)t\in \C[\widetilde{\ft}]$.

Recall that the affine Grassmannian of $SL_2$ decomposes as the the disjoint union of finite-dimensional Schubert cells $\Gr_{SL_2}^m:=SL_2(\cO)t^\lambda SL_2(\cO)$. Let $\Gr^{\leq m}_{SL_2}=\overline{\Gr^m_{SL_2}}=\bigsqcup_{l \leq m} \Gr_{SL_2}^l$.  It is clear that the subvarieties $X_d^{\leq m}:=(\Sp_{t^dz})^{\leq m}=\overline{\Sp_{t^dz}\cap \Gr_{SL_2}^{\leq m}}$ are $\tT$-stable. The corresponding GKM graph is just the induced subgraph formed by the vertices $[-m,m]\subset \Z$. In particular, we may compute $H_*^{\tT}(X_m)$ using Theorem \ref{localization} for the corresponding GKM graphs. Note that each such graph in this case is a chain of complete graphs on $d$ vertices glued along $d-1$
vertices. Let us first practice the case when the length of the chain is one, i.e. we are computing the $\tT$-equivariant Borel-Moore homology of the classical Springer fiber $sp_e\subset \Gr(2d,d)$, where $e$ is the square 
of a regular nilpotent element (see \cite{ChLa}).  This is essentially a projective space of dimension $d$.
\begin{example}
\label{ex:p1}
Let $d=1$. Then the GKM graph of $sp_e$ is two vertices joined by a line, with the character $y+t$.
Theorem \ref{localization} then tells us that 
$$i_*: H^T_*(sp_e^T)\to H^T_*(sp_e)$$ is injective and $(i_*)^{-1}H_*^T(sp_e)$ consists of rational differential forms $(\omega_0,\omega_1)$ so that 
$$\Res_{y=-t}(\omega_0+\omega_1)=0$$ with poles of order at most one and along $y=-t$. In particular, any polynomial linear combination of $a=(\frac{dydt}{y+t}, \frac{-dydt}{y+t})$ and $b=(dydt,0)$ satisfies these requirements and is the most general choice, so we conclude $H^T_*(X)$ is a free $\C[y,t]$-module with basis $a,b$. As $sp_e=\P^1$ is smooth, we further use the Atiyah-Bott localization theorem to conclude that $a=[\P^1]$.
\end{example}
From now on, we will save notation and write each tuple of differential forms $(\omega_1,\ldots, \omega_q)=(f_1dydt,\ldots,f_qdydt)$ simply as $(f_1,\ldots f_q)$.

Let us now compute $H_*^T(\Sp_{tz})$ for $G=SL_2$ for illustrative purposes. This is very similar to Example \ref{ex:p1}.
\begin{proposition}
\label{prop:sl2d1}
If $d=1$ and $G=SL_2$, then $H_*^{\tT}(\Sp_{tz})$ is the $\C[t,y]$-linear span of $$
a=(\ldots,0,0,1,0,0,\ldots)$$ and 
$$
b_i=(\ldots, 0,\frac{1}{(2i+1)t+y}, \frac{-1}{(2i+1)t+y},0,\ldots),$$
where the $1$ in $a$ is at the $0$th position and the nonzero entries in $b_i$ are at the $i$th and $(i+1)$th positions, respectively.
In particular, $$\frac{H_*^{\tT}(X)}{t\cdot H_*^{\tT}(X)}\cong H_*^T(X)$$ is isomorphic to the $\C[y]$-linear span of $a$ and $b_i'=(\ldots,0,1/y,-1/y,0,\ldots).$
\end{proposition}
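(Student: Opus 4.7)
The plan is to apply the ind-scheme localization theorem (Corollary~\ref{cor:localization}) directly to the GKM data recalled above the statement: the $\tT$-fixed locus is $\Z$, each pair of adjacent integers $k, k+1$ is joined by a single one-dimensional orbit, and the associated character differential is $y+(2k+1)t$. Consequently, suppressing $dy\,dt$, the module $H_*^{\tT}(X)$ embeds into $\bigoplus_{k\in\Z}\C(y,t)$ as those tuples $(f_k)_{k\in\Z}$ for which each $f_k$ has at most simple poles, only along the lines $y+(2k\pm 1)t=0$, and the edge compatibility
\[
\Res_{y+(2k+1)t=0}\bigl(f_k+f_{k+1}\bigr)=0
\]
holds for every $k\in\Z$. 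Membership of $a$ and each $b_i$ in this set is immediate: the two nonzero residues of $b_i$ cancel tautologically, while $a$ has no pole at all.

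To show spanning over $\C[y,t]$, I would use that the Schubert filtration equips $\Sp_{tz}$ with an ind-structure by $\tT$-stable projective subvarieties $X^{\leq m}$, so every class in $H_*^{\tT}(X)=\varinjlim H_*^{\tT}(X^{\leq m})$ is represented by a tuple $(f_k)$ of finite support $[m,M]$. I would then peel off contributions from left to right: at the leftmost nonzero position $m$ the residue condition forces no pole at $y+(2m-1)t$, so one can write $f_m=p(y,t)+s(t)/(y+(2m+1)t)$ for some $p\in\C[y,t]$ and $s\in\C[t]$. Subtracting $s(t)\cdot b_m$ simultaneously cancels the pole at $y+(2m+1)t$ in both $f_m$ and $f_{m+1}$. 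Combined with the identities
\[
e_{k+1}=e_k-(y+(2k+1)t)\,b_k,\qquad e_{k-1}=e_k+(y+(2k-1)t)\,b_{k-1},
\]
and $e_0=a$, an easy induction shows that every ``delta class'' $e_j$ lies in the $\C[y,t]$-span of $a$ and $\{b_i\}$; so I may further subtract $p(y,t)\cdot e_m$ to make $f_m=0$ and reduce the support to $[m+1,M]$. Iterating this a finite number of times expresses $(f_k)$ as a $\C[y,t]$-combination of the claimed generators.

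The second assertion follows from \cite[Proposition 1]{Bri}, which gives $H_*^T(X)\cong H_*^{\tT}(X)/tH_*^{\tT}(X)$; setting $t=0$ sends $a$ to itself and $b_i$ to $b_i'=(\ldots,0,1/y,-1/y,0,\ldots)$, as asserted.

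The only delicate inputs are the equivariant formality and GKM property of each $X^{\leq m}$, needed to invoke Corollary~\ref{cor:localization}; these rest on the cohomological purity of affine Springer fibers from \cite{GKM3}. Otherwise the argument is straightforward residue bookkeeping, and the peeling procedure terminates automatically because each representing tuple has finite support.
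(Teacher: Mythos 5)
Your proof is correct, and it takes a genuinely more explicit route than the paper on the spanning step. The paper's proof of this proposition is terse: it invokes Corollary~\ref{cor:localization} and the well-known fact that $X_1$ is an infinite chain of projective lines, implicitly deferring the justification of spanning. (The companion Lemma~\ref{lem:sl2} handles the spanning for general $d$ by a rank count on finite Schubert truncations, using the geometry of $X_d^{\leq m}$ as a chain of Spaltenstein varieties $sp_d$ glued along $sp_{d-1}$.) Your ``peeling'' argument replaces that geometric rank count with direct residue bookkeeping: reduce a finitely supported tuple from the leftmost nonzero slot, using that the edge condition with the (vanishing) left neighbour kills the pole at $y+(2m-1)t$, then subtract appropriate multiples of $b_m$ and the delta class $e_m$ to clear that slot, with the identities $e_{k+1}=e_k-(y+(2k+1)t)\,b_k$ and $e_{k-1}=e_k+(y+(2k-1)t)\,b_{k-1}$ (and $e_0=a$) guaranteeing that every $e_j$ lies in the span. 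This is more elementary and self-contained; the paper's route is shorter but rests on knowing the explicit geometry of the Schubert truncations. One small point worth making explicit: the peeling uses that finitely supported tuples satisfying the residue conditions of the full GKM graph of $X$ already lie in some $H_*^{\tT}(X^{\leq m})$, which holds here because each $X^{\leq m}$ induces the full subgraph on $[-m,m]$ (as recalled in Section~\ref{sec:sl2}), so the $X^{\leq m}$-conditions and the $X$-conditions agree on such tuples. With that noted, your argument is complete.
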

\begin{proof}
By the discussion above, the GKM graph has vertices $\Z$ and edges exactly between $i, i+1$ for all $i$. Indeed, it is well-known that $X_1$ is just an infinite chain of projective lines. The weights of the edges for the $\tT$-action are given by the character $(2i+1)t+y$ by \cite[Lemma 6.4.]{GKM2}. Applying Corollary \ref{cor:localization} we get the first claim. Setting $t$ to zero recovers $H_*^T(X)$, so that we get the second result.
\end{proof}

\begin{lemma}
\label{lem:sl2}
Let $d\geq 1$. Then the $\tT$-equivariant Borel-Moore homology of $X_d=\Sp_{t^dz}$ is the $\C[t,y]$-linear span of 

\begin{align*}
a_0&=(\ldots,0,0,1,0,0,\ldots)\\
a_1&=(\ldots,0,0,\frac{1}{y+t},\frac{-1}{y+t},0,\ldots)\\
& \vdots \\
a_{d-1}&=(\ldots,0,0,\frac{1}{\prod_{i=1}^{d-1}(y+it)},\frac{-\binom{d-1}{1}}{(y+t)\prod_{i=2}^{d-1}(y+(i+1)t)},\ldots, \frac{(-1)^{d-1}\binom{d-1}{d-1}}{(\prod_{i=1}^{d-1}(y+(d-1+i)t)},0,\ldots)\\
b_k&=(\ldots,0,0,\frac{\binom{d}{0}}{f_k^{(1)}},\frac{-\binom{d}{1}}{f_k^{(2)}},\ldots, \frac{(-1)^d\binom{d}{d}}{f_k^{(d)}},0,\ldots), \; k\in \Z,
\end{align*}
where 
$$f_k^{(j)}=\prod_{i=0}^{j-1}(y+(2k+i+j)t)\prod_{i=j+1}^d(y+(2k+i+j)t),\;\; j=1,\ldots,d.$$
Here the nonzero entries in $a_i$ are at $0,\ldots, i$ and the nonzero entries in $b_k$ are at $k,\ldots, k+d$.

In particular, letting $t=0$, $$H^T_*(X_d)\subseteq H^T_*(\Lambda)$$ is the $\C[y]$-linear span of 
\begin{align*}
a_0'&=(\ldots,0,0,1,0,0,\ldots)\\
a_1'&=(\ldots,0,0,\frac{1}{y},\frac{-1}{y},0,\ldots)\\
& \vdots \\
b_k'&=(\ldots,0,0,\frac{\binom{d}{0}}{y^d},\frac{-\binom{d}{1}}{y^d},\ldots, \frac{(-1)^{d-1}\binom{d}{d-1}}{y^d}\frac{(-1)^d\binom{d}{d}}{y^d},0,\ldots), \; k\in \Z.
\end{align*}
Note that if we write $\C[\Lambda]=\C[x^\pm]$, then in the monomial basis $a_0'=x^0$, $a_1'=\frac{1-x}{y}$, and $b_k'=x^k(1-x)^d/y^d$.
\end{lemma}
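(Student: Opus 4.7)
The plan is to prove the lemma by a direct application of the GKM/localization description of $H_*^{\tT}(X_d)$ given in Corollary~\ref{cor:localization}, combined with the explicit description of the GKM graph of $X_d$ recalled from \cite[Lemma 6.4]{GKM2}. Concretely, the $\tT$-fixed points are indexed by $\Z$, one-dimensional $\tT$-orbits connect pairs $(i,j)$ with $|i-j|\le d$, and each such orbit carries character $y+(i+j)t$.

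The first step is a combinatorial observation: two distinct edges cannot simultaneously share a vertex and a character. Indeed, an edge with character $y+mt$ has the form $\{i,m-i\}$, and two such unordered pairs sharing a vertex must coincide. Therefore the residue conditions of Corollary~\ref{cor:localization} decouple into one condition per edge, namely $\Res_{y=-(i+j)t}(\omega_i+\omega_j)=0$; moreover the pole clause sharpens, since for each $k$ the singleton $\{k\}$ is a connected component of $X_d^{\ker(y+mt)}$ whenever $m\notin\{2k\pm 1,\ldots,2k\pm d\}$. Hence $\omega_k$ is allowed simple poles only along the $2d$ characters $y+(2k\pm s)t=0$, $s=1,\ldots,d$.

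Second, I would verify by a direct partial-fractions computation that the classes $a_0,\ldots,a_{d-1}$ and $b_k$ for $k\in\Z$ satisfy every decoupled residue condition and respect the pole restriction at each position in their support. The binomial coefficients and sign alternation in $b_k$ arise precisely as the unique solution, up to a scalar, of the linear residue-cancellation identities at the internal edges of the support $\{k,\ldots,k+d\}$; concretely they reflect the finite-difference identity $\sum_{s=0}^{d}(-1)^s\binom{d}{s}f(s)=0$ for polynomials $f$ of degree less than $d$. The boundary classes $a_i$ are the analogous uniquely determined classes whose support is strictly contained in $\{0,1,\ldots,i\}$ and whose leading pole pattern is ``triangular.''

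The core step is the spanning argument via a peeling reduction. Given any GKM class $\omega=(\omega_k)$ of finite support $[N,M]$, the constraint $\omega_{M+1}=\cdots=\omega_{M+d}=0$ together with the pole restriction from Step~1 forces
\begin{equation*}
\omega_M=\frac{R(y,t)}{\prod_{j=1}^d\bigl(y+(2M-j)t\bigr)}
\end{equation*}
for some $R\in\C[t,y]$. The $M$-th entry of $b_{M-d}$ is, up to sign, the inverse of this product; subtracting an appropriate $\C[t,y]$-multiple of $b_{M-d}$ from $\omega$ therefore cancels the entry at $M$, and preserves all remaining residue conditions because $b_{M-d}$ itself lies in $H_*^{\tT}(X_d)$. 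A symmetric subtraction using $b_N$ trims the left end. Iterating, the support shrinks to $\{0,1,\ldots,d-1\}$, and a parallel inductive pole analysis on this finite set writes the residual class as a $\C[t,y]$-linear combination of $a_0,\ldots,a_{d-1}$. Linear independence over $\C[t,y]$ is immediate from the distinct supports and the nonvanishing of the leading (highest) nonzero entry of each generator, and setting $t=0$ in every formula yields the $H_*^T$ description in the second half of the statement.

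The main obstacle I anticipate is the bookkeeping in the verification step: one must track exactly which $y+mt$ appear in each denominator so that every cross-edge residue cancels, which amounts to (correctly indexed) binomial identities; the indexing in the stated formulas needs to be interpreted with the convention that $f_k^{(j)}$ extends to $j=0$ via the empty product. A secondary concern is confirming that the iterated subtractions in the spanning step do not create new poles at earlier positions, which is automatic once one observes that each $b_k$ is itself GKM-valid, so the subtraction takes place inside the GKM submodule.
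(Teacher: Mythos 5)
Your proof takes a genuinely different route from the paper's. The paper verifies the residue conditions for the $b_k$ directly (as you do), but for the spanning step it invokes the geometric description of $X_d$ from Chaudouard--Laumon as an infinite chain of Spaltenstein varieties $sp_d$ glued along $sp_{d-1}$, computes the rank of $H_*^T(X_d^{\leq m})$ over $\C[y,t]$ in each degree from that picture, and then concludes by a rank count against the linearly independent $a_i, b_k$. Your argument replaces this geometric input with a self-contained combinatorial peeling, which is attractive: it needs no external input beyond the GKM graph itself and makes the free-module structure visibly algorithmic. The price is that the bookkeeping has to be done carefully, and two places in your sketch are not yet airtight.

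First, the ``decoupling'' of the residue conditions. You observe that two distinct edges cannot share both a vertex and a character; that is correct, but Corollary~\ref{cor:localization} imposes residue vanishing per \emph{connected component of $X^{\ker\chi}$}, not per edge, so one must also rule out that two $\P^1$'s with the same character $y+mt$ sit inside a common higher-dimensional component of $X^{\ker\chi}$. This does hold here (the $\tT$-orbits of $X^{\ker\chi}$ all have dimension $\le 1$, and since the edges of weight $\pm(y+mt)$ are pairwise disjoint by your observation, each component is a single $\P^1$ or a point), but the extra sentence is needed; your stated combinatorial fact alone does not give it.

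Second, and more seriously, the termination of the peeling as you describe it (``subtract $b_{M-d}$, symmetrically subtract $b_N$, iterate'') is not automatic. Subtracting $b_{M-d}$ extends the support on the left to $\min(N,M-d)$, and subtracting $b_N$ extends it on the right to $\max(M,N+d)$; if you alternate these naively when $M-N$ is near $d$, the support merely slides back and forth and never reaches $\{0,\dots,d-1\}$. The fix is to order the operations: peel from the right with $b_{M-d}$ \emph{until} the right endpoint equals $d-1$ (at which stage the left endpoint is $\min(N,0)$), and only \emph{then} peel from the left with $b_{N'}$ (this keeps the right endpoint at $d-1$ because $N'+d\le d-1$ for $N'<0$) until the left endpoint is $0$; one must also treat the degenerate starting cases $M<d-1$ or $N>0$ separately. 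With this ordering the reduction to support $\subset\{0,\dots,d-1\}$ does go through, and your final inductive reduction against $a_{d-1},\dots,a_0$ finishes the spanning. So the argument is salvageable, but as written it has a real gap at the termination step.
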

\begin{proof}
Let us first check the residue conditions of Corollary \ref{cor:localization}. Note that $a_0,\ldots, a_{d-1}$ are just $b_0$ for some smaller $d$, in particular it is enough to check the conditions for $b_k$. There is an orbit between $k+j$ and $k+j'$ whenever $|j-j'|\leq d$, and $\tT$ acts on said orbit via $\chi=y+(2k+j+j')t$. In particular, we need to prove that 
$$\Res_{y=-(2k+j+j')t}\left(\frac{(-1)^j\binom{d}{j}}{f_k^{(j)}}+\frac{(-1)^{j'}\binom{d}{j'}}{f_k^{(j')}}\right)=0.$$
First, we compute that
$$f_k^{(j)}=\prod_{i\neq j,1\le i\le d}(y+(2k+i+j)t),$$ so the residue at $y=-(2k+j+j')t$ of 
$1/f_k^{(j)}$ equals
$$\frac{1}{\prod_{i\neq j,j'}(i-j')t}=\frac{(j-j')}{\prod_{i\neq j'}(i-j')t}=\frac{(j-j')}{(-1)^{j'} (j')! (d-j')!}.$$
If we multiply this by $$(-1)^j \binom{d}{j},$$ we get 
$$\frac{(j-j')d!}{(-1)^{j'+j} (j')! (d-j')!j!(d-j)!},$$
which is antisymmetric under switching $j$ and $j'$. By linearity of taking residues, we get the result.

We need to show the reverse inclusion. Let $sp_d$ be the Spaltenstein variety of $d$-planes in $\C^{2d}$ 
stable under the $(d,d)$-nilpotent element. From \cite[page 448]{ChLa}, we know that $X_d$ is an 
infinite chain of $sp_d$ glued along $sp_{d-1}$. In addition, $X_d^{\leq m}$ from the beginning 
of Section \ref{sec:sl2} is a chain of $2m$ copies of $sp_d$ glued along $sp_{d-1}$. From the 
form of the GKM graph it is immediate that the $T$-equivariant Borel-Moore homology of $X_{d}^{\leq m}$
 as a graded $\C[y,t]$-module looks like that of a chain of $2m$ copies of $\P^d$ consecutively glued along $\P^{d-1}$. In particular, 
$H_*^T(X_d^{\leq m})$ has rank $1$ over $\C[y,t]$ in degrees $\leq 2d-2$ and rank $2m$ in degree $2d$. Since the classes $b_i$ for $i=-m,\ldots, m$ are linearly independent over $\C[y,t]$ and there are $2m$ of them, the $b_i$ must span $H_{2d}^T(X_d^{\leq m})$. Taking the colimit, the first result follows. The second result is immediate from the form of $f_k^{(j)}$ and setting $t=0$.

\end{proof}

\begin{remark}
In \cite[Section 12]{GKM2}, the analogues of the classes $b_k$ are played by the polynomials denoted $f_{k,d}$ in {\it loc. cit.} They are the ones attached to ''constellations" of one-dimensional orbits.
\end{remark}
\begin{remark}
\label{rmk:sl2schubert}
In Proposition \ref{prop:sl2d1} and Lemma \ref{lem:sl2}, the polynomials $f_k^{(j)}$ that appear seem to be related to the affine Schubert classes in $H_*^T(X_d)$ given by intersections by $G(\cO)$-orbits on $\Gr_{SL_2}$. Since the components $\cong sp_d$ are rationally smooth (by e.g. the criteria in \cite[Theorem 1.4]{Brion2}), $f_k^{(j)}$ are exactly the inverses to $\tT$-equivariant Euler classes of the $k$th irreducible component at the fixed point $j\in \Lambda$. It seems that for higher rank groups, rational smoothness of the irreducible components no longer holds in general.
\end{remark}
\subsection{The general case}
In this section, we prove Theorem \ref{thm:mainthm}.
The GKM graph for $\tT$ acting on $\Sp_{t^dz}$ is always infinite; indeed we have the following. 
\begin{lemma}
The vertices of the GKM graph of $\Sp_{t^dz}$ are $\Lambda=X_*(T)$ and there is an edge $\lambda\to \mu$ whenever $\lambda-\mu=k\alpha$, where $\alpha\in \Phi^+$ and $k\leq d$.
\end{lemma}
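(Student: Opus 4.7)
The plan is (i) to identify the vertex set of the GKM graph with the $\tT$-fixed locus of $\Sp_{t^dz}$, and (ii) to enumerate the one-dimensional $\tT$-orbits of $\Gr_G$ that sit inside $\Sp_{t^dz}$. For (i), the $T$-fixed points of $\Gr_G$ are the cosets $t^\lambda G(\cO)$ with $\lambda\in\Lambda$, and since $\G_m^{rot}$ merely rescales $t$ these are also the $\tT$-fixed points. Because $z\in\ft$ commutes with $\Lie(T)$, we have $\Ad(t^{-\lambda})(t^d z)=t^d z\in\fg\otimes\cO$, so every $t^\lambda G(\cO)$ lies in $\Sp_{t^dz}$, giving $\Sp_{t^dz}^{\tT}=\Lambda$.

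For (ii), I would invoke the standard description that every one-dimensional $\tT$-orbit in $\Gr_G$ has the form $\{u_\alpha(st^k)\,t^\lambda G(\cO):s\in\G_m\}$ for a fixed point $t^\lambda G(\cO)$ and a real affine root $\tilde\alpha=\alpha+k\delta$ with $\alpha\in\Phi$ and $k\in\Z$, with closure a $\P^1$ joining $t^\lambda$ to $t^{s_{\tilde\alpha}(\lambda)}$, where $s_{\tilde\alpha}(\lambda)=\lambda-(\langle\alpha,\lambda\rangle-k)\alpha^\vee$. To decide which such orbits lie inside $\Sp_{t^dz}$, I would compute, for $g=u_\alpha(st^k)t^\lambda$, using $[z,e_\alpha]=\alpha(z)e_\alpha$ and $\Ad(t^{-\lambda})e_\alpha=t^{-\langle\alpha,\lambda\rangle}e_\alpha$,
$$\Ad(g^{-1})(t^d z)=t^d z+s\,\alpha(z)\,t^{d+k-\langle\alpha,\lambda\rangle}e_\alpha.$$
Since $\alpha(z)\ne 0$ by regularity of $z$, this lies in $\fg\otimes\cO$ for every $s\in\C$ iff $d+k\ge\langle\alpha,\lambda\rangle$, i.e.\ setting $m:=\langle\alpha,\lambda\rangle-k$ (so that the second endpoint is $\lambda-m\alpha^\vee$), iff $m\le d$. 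Replacing $\alpha$ by $-\alpha$ gives the symmetric condition $m\ge -d$, so the edges of the GKM graph are exactly the pairs $(\lambda,\lambda-m\alpha^\vee)$ with $\alpha\in\Phi^+$ and $1\le m\le d$, which is the claim (with the symbol $\alpha$ of the lemma read as a coroot, consistent with the $SL_2$ identification used in Lemma \ref{lem:sl2}).

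The main obstacle is the completeness half of (ii): one needs to know that every one-dimensional $\tT$-orbit in $\Gr_G$ has the form above. This follows from the Iwahori cell decomposition $\Gr_G=\bigsqcup_{\lambda\in\Lambda} \bI\cdot t^\lambda G(\cO)$ together with the identification of the $\tT$-eigenlines inside each cell with real affine root lines; alternatively, it can be verified directly by reducing to the affine $SL_2$-subgroups along each $\tilde\alpha$, as was done in Lemma \ref{lem:sl2}. Once that orbit classification is in hand, the short $\Ad$-computation above immediately pins down the condition $m\le d$ and the lemma follows.
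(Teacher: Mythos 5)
Your proof is correct, and it takes a genuinely different route from the paper's. The paper disposes of the lemma entirely by citation: \cite[Lemma 5.12]{GKM2} gives that the one-dimensional $\tT$-orbits of $\Sp_{t^dz}$ decompose as $\bigcup_{\alpha\in\Phi^+}(\Sp_{t^dz}^\alpha)_1$ with pairwise intersections equal to $\Lambda$, \cite[Lemma 8.1]{GKM2} reduces each semisimple-rank-one piece to $SL_2$, and \cite[Lemma 6.4]{GKM2} settles the $SL_2$ case. You instead parametrize the one-dimensional $\tT$-orbits of $\Gr_G$ by real affine roots and compute $\Ad(g^{-1})(t^dz)$ for $g=u_\alpha(st^k)t^\lambda$ directly, extracting the condition $d+k\ge\langle\alpha,\lambda\rangle$, i.e.\ $m=\langle\alpha,\lambda\rangle-k\le d$. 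This makes the role of $d$ transparent (it is literally a valuation shift) rather than being hidden inside the cited $SL_2$ computation, and your two-sided argument via $\alpha\leftrightarrow-\alpha$ correctly yields $1\le m\le d$ for each $\alpha\in\Phi^+$. The one fact you invoke without proof --- that every one-dimensional $\tT$-orbit in $\Gr_G$ is a real affine root line --- is exactly the input absorbed into \cite[Lemma 5.12]{GKM2}, so the two arguments rest on the same foundation, just factored differently. Your reading of the $\alpha$ in the statement as a coroot is also the intended one; as written the lemma abuses notation (harmless for $GL_n$, where roots and coroots are identified).
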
 
\begin{proof}
From \cite[Lemma 5.12]{GKM2}, we know that the one-dimensional $\tT$-orbits are $(\Sp_{t^dz})_1=\bigcup_{\alpha\in \Phi^+} (\Sp_{t^dz}^\alpha)_1$ and $\Sp_{t^dz}^\alpha\cap \Sp_{t^dz}^\beta=\Lambda$ unless $\beta=\alpha$.
In particular, we are reduced to the semisimple rank 1 case which is reduced to the $SL_2$ case by \cite[Lemma 8.1]{GKM2} and the $SL_2$ case is handled by Lemma 6.4 in {\em loc. cit.}. 
\end{proof}

We also need the following corollary to Lemma \ref{lem:sl2}.
\begin{corollary}
\label{cor:rk1}
Let $\alpha\in \Phi^+$, and let $y_\alpha\in \C[\ft]=H^*_T(pt)$ be the linear functional corresponding to $\alpha$. Denote
$X_d^\alpha:=\Sp_{zt^d}^\alpha:=\Sp_{zt^d}\cap \Gr_{H^\alpha}$. For any $G$ and $\alpha\in \Phi^+(G,T)$, we have
$$y_\alpha^d H_*^T(X_d^\alpha)=J^d_\alpha=\langle y_\alpha, 1-\alpha^\vee\rangle^d \subset H_*^T(\Lambda)= \C[\Lambda]\otimes \C[\ft].$$ Here $\langle S \rangle$ means the ideal in $\C[\Lambda]\otimes \C[\ft]$ generated by the subset $S$.
\end{corollary}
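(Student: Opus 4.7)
The plan is to reduce the claim to the rank-one calculation already performed in Lemma \ref{lem:sl2}. Since $z\in\Lie(T)^{reg}$ commutes with the coroot lattice and the semisimple rank-one subgroup $H^\alpha$ contains $T$, every lattice point $t^\lambda$ with $\lambda\in\Lambda$ lies in $X_d^\alpha$; in particular $(X_d^\alpha)^T=\Lambda$. Combining \cite[Lemma 8.1]{GKM2} (rank-one reduction) with \cite[Lemma 6.4]{GKM2}, the one-dimensional $\tT$-orbits in $X_d^\alpha$ connect $\lambda$ to $\lambda+k\alpha^\vee$ precisely when $1\leq|k|\leq d$ and are labeled by real affine roots of the form $y_\alpha+nt$. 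In particular, every edge of the $\tT$-equivariant GKM graph points in the $\alpha^\vee$-direction, so the residue conditions of Corollary \ref{cor:localization} split into independent conditions on each coset $\lambda_0+\Z\alpha^\vee$ of $\Lambda/\Z\alpha^\vee$.

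On any one such coset the GKM data is precisely that of the $SL_2$ affine Springer fiber $\Sp_{t^dz}$ treated in Lemma \ref{lem:sl2}. Applying that lemma coset-by-coset and then passing to the $T$-equivariant quotient by setting $t=0$ yields an explicit $\C[\ft]$-linear basis of $H_*^T(X_d^\alpha)$. The natural action of $\Lambda$ on $X_d^\alpha$ by left translation (which preserves $X_d^\alpha$ because $\Lambda\subset H^\alpha(\cK)$ commutes with $\gamma=zt^d$) induces a $\C[\Lambda]$-module structure on $H_*^T(X_d^\alpha)$ which, under the GKM embedding, acts by multiplication by the monomial $x^\mu\in\C[\Lambda]$. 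Assembling the single-coset generators across all cosets by this action, one concludes that $H_*^T(X_d^\alpha)$ is the $\C[\Lambda]\otimes\C[\ft]$-submodule of $(\C[\Lambda]\otimes\C[\ft])[y_\alpha^{-1}]$ generated by
$$1,\ \frac{1-\alpha^\vee}{y_\alpha},\ \frac{(1-\alpha^\vee)^2}{y_\alpha^2},\ \ldots,\ \frac{(1-\alpha^\vee)^d}{y_\alpha^d}.$$

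Finally, multiplication by $y_\alpha^d$ sends this generating set to $\{y_\alpha^{d-i}(1-\alpha^\vee)^i:0\leq i\leq d\}$, which is the standard generating set of the ideal $\langle y_\alpha,1-\alpha^\vee\rangle^d\subset \C[\Lambda]\otimes\C[\ft]$; this will identify $y_\alpha^d H_*^T(X_d^\alpha)$ with $J_\alpha^d$. The main obstacle I expect is carefully verifying the decoupling of the GKM description over cosets of $\Z\alpha^\vee\subset\Lambda$ and checking that the $\C[\Lambda]$-action from lattice translation is compatible with the GKM embedding; once those are in place, the remainder is a direct manipulation of the explicit generators supplied by Lemma \ref{lem:sl2}.
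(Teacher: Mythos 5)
Your proposal is correct and follows essentially the same route as the paper: reduce the GKM graph of $X_d^\alpha$ to a disjoint union of rank-one chains indexed by $\Lambda/\Z\alpha^\vee$, apply Lemma~\ref{lem:sl2} to each, and read off the resulting ideal. The one small refinement you add — justifying the $\C[\Lambda]$-module structure geometrically via $\Lambda$-left-translation on $X_d^\alpha$ acting through multiplication by $x^\mu$ on the fixed-point side — is a cleaner way to package what the paper treats somewhat implicitly (and addresses the subtlety flagged in the remark following Theorem~\ref{thm:mainthm}).
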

\begin{proof}
Since $X_d^\alpha$ is an unramified affine Springer  fiber of valuation $d$ for a semisimple rank one group, it is a disjoint union of infinite chains of Spaltenstein varieties $sp_d$, as explained in Section \ref{sec:sl2}. More precisely,
it is a disjoint union of such over $\Lambda/\langle \alpha^\vee\rangle$ inside $X_d$. Identify $H_*^T(\Lambda)$ with $\C[\Lambda]\otimes \C[\ft]$ and write its elements $\C[\ft]$-linear combinations of $x^\lambda:=x^\lambda\otimes 1$.  From Lemma \ref{lem:sl2} and \cite[Lemma 6.4]{GKM2}, we have that 
$H^*_T(X_d^\alpha)\subset H_*(\Lambda)\otimes \C(\ft)$ is the $\C[\ft]$-linear span of 
$$\frac{x^\lambda(1-x^{\alpha^\vee})^d}{y^d_\alpha}$$ and 
$$\frac{(1-x^{\alpha^\vee})^k}{y^k_\alpha}$$ for $k=0,\ldots, d-1$. In particular, $y^d_\alpha H^*_T(\Sp_{zt}^\alpha)\subset \C[\Lambda]\otimes \C[\ft]$ is identified with the ideal $$J^d_\alpha=\langle (1-x^{\alpha^\vee})^d,(1-x^{\alpha^\vee})^{d-1}y_\alpha,\ldots, (1-x^{\alpha^\vee})y_\alpha^{d-1},y_\alpha^{d}\rangle.$$
\end{proof}

\begin{theorem}
\label{thm:mainthm}
Let $\Delta=\prod_\alpha y_\alpha \in H^*_T(pt)$ be the Vandermonde element. The equivariant Borel-Moore homology of $X_d:=\Sp_{t^dz}$ for a reductive group $G$ is up to multiplication by $\Delta^d$ canonically isomorphic as a (graded) $\C[\Lambda]\otimes\C[\ft]$-module to the ideal
$$J^{(d)}=\bigcap_{\alpha\in \Phi^+} J_\alpha^d \subset \C[\Lambda]\otimes \C[\ft].$$ In particular, there is a natural algebra structure on $\Delta^d H_*^T(\Sp_\gamma)$ inherited from $\C[\Lambda]\otimes \C[\ft]$, and $J^{(d)}$ is a free module over $\C[\ft]$.
\end{theorem}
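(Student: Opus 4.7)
The strategy is to use GKM localization to identify $H_*^T(X_d)$ as a submodule of $\C[\Lambda]\otimes\C(\ft)$ cut out by residue conditions, then match these conditions root by root against the rank-one ideals $J_\alpha^d$ via Corollary \ref{cor:rk1}. To avoid the nuisance of higher-order poles when several edges of the GKM graph share the same character, I would first work $\tT$-equivariantly, where the weights $y_\alpha + kt$ are distinct and Corollary \ref{cor:localization} applies with only simple poles; I would then specialize $t=0$ at the end.

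The first key step is to observe, using \cite[Lemma 5.12]{GKM2}, that the one-dimensional $\tT$-orbits on $X_d$ decompose as $\bigcup_{\alpha \in \Phi^+} (X_d^\alpha)_1$, so the GKM graph of $X_d$ is the union along the common vertex set $\Lambda = X_d^T$ of the rank-one GKM subgraphs of the $X_d^\alpha$. Consequently the residue condition imposed at the hyperplane $y_\alpha + kt = 0$ in Corollary \ref{cor:localization} involves only edges from the $\alpha$-strand, and an element of $\C[\Lambda]\otimes\C(\widetilde{\ft})$ lies in $H_*^{\tT}(X_d)$ if and only if its restriction to each rank-one strand lies in $H_*^{\tT}(X_d^\alpha)$. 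By equivariant formality of $X_d$, proved in \cite{GKM3} via cohomological purity, $H_*^{\tT}(X_d)$ is free over $\C[\widetilde{\ft}]$, so specialization at $t=0$ is flat and yields $H_*^T(X_d) \cong H_*^{\tT}(X_d)/t H_*^{\tT}(X_d)$ together with the analogous root-by-root description.

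The second key step is to apply Corollary \ref{cor:rk1} to each root: multiplying by $y_\alpha^d$ sends $H_*^T(X_d^\alpha)$ isomorphically onto $J_\alpha^d \subset \C[\Lambda]\otimes\C[\ft]$. Multiplying $H_*^T(X_d)$ by $\Delta^d = \prod_{\alpha\in\Phi^+} y_\alpha^d$ therefore clears all root-hyperplane poles and, via the splitting of residue conditions, lands in $\bigcap_{\alpha} J_\alpha^d = J^{(d)}$; the opposite containment is obtained by running the correspondence in reverse. The $\C[\Lambda]\otimes\C[\ft]$-algebra structure on $\Delta^d H_*^T(X_d)$ is then simply the one inherited from the ambient ring, and freeness of $J^{(d)}$ over $\C[\ft]$ is transported from the freeness of $H_*^T(X_d)$ over $\C[\ft]=H_T^*(\mathrm{pt})$ granted by equivariant formality.

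The main obstacle I expect is justifying the equality $\Delta^d H_*^T(X_d) = \bigcap_\alpha J_\alpha^d$ (rather than merely $\subseteq$) after the $t\to 0$ specialization. The subtlety is that what were simple poles at the distinct hyperplanes $y_\alpha + kt = 0$ coalesce into higher-order poles at $y_\alpha = 0$, and one must verify that the coalesced residue conditions are precisely the rank-one conditions identified in Corollary \ref{cor:rk1}. Flatness of specialization, combined with the explicit $SL_2$ basis from Lemma \ref{lem:sl2} and the graph decomposition above, should reduce the check to a rank-one statement that has already been done.
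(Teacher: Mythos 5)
Your outline matches the paper's up to the last step: you use the GKM root-by-root decomposition of \cite[Lemma 5.12]{GKM2} to write $H_*^T(X_d)=\bigcap_\alpha H_*^T(X_d^\alpha)$, you invoke Corollary~\ref{cor:rk1} to identify $y_\alpha^d H_*^T(X_d^\alpha)=J_\alpha^d$, and you get the easy containment $\Delta^d H_*^T(X_d)\subseteq\bigcap_\alpha J_\alpha^d$ by observing that $\Delta^d = y_\alpha^d\cdot\prod_{\beta\neq\alpha}y_\beta^d$ keeps you in the ideal $J_\alpha^d$ for each $\alpha$. So far this is exactly the paper's argument.

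However, the reverse containment is not ``obtained by running the correspondence in reverse,'' and the obstacle is not where you locate it. The issue is not that simple poles in the $\tT$-picture coalesce into higher-order poles at $t=0$ (Corollary~\ref{cor:rk1} already absorbs that, root by root); the issue is genuinely of higher rank and cannot be reduced to a rank-one check. Concretely, $h\in H_*^T(X_d)=\bigcap_\alpha y_\alpha^{-d}J_\alpha^d$ means $y_\alpha^d h\in J_\alpha^d$ for every $\alpha$, whereas $f\in J^{(d)}=\bigcap_\alpha J_\alpha^d$ only says $f\in J_\alpha^d$. For $f/\Delta^d$ to lie in $H_*^T(X_d)$ you would need $f/\prod_{\beta\neq\alpha}y_\beta^d \in J_\alpha^d$ for every $\alpha$, which a priori is a much stronger condition than $f\in J_\alpha^d$ and is not a rank-one statement: it compares the interaction of the ideals $J_\alpha^d$ across \emph{different} roots. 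Multiplication by $\Delta^d$ and intersection over $\alpha$ do not commute, and flatness of the $t\to 0$ specialization says nothing about this.

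What actually closes the gap in the paper is a commutative-algebra input you do not mention. Equivariant formality of $X_d$ (which rests on the purity theorem of \cite{GKM3}) makes $\Delta^d H_*^T(X_d)$ a \emph{free} $\C[\ft]$-module. One then observes that the inclusion $\Delta^d H_*^T(X_d)\subseteq J^{(d)}$ becomes an equality after inverting $\Delta$, since localization at $\Delta$ turns each containment $\Delta^d H_*^T(X_d)\subseteq J_\alpha^d = y_\alpha^d H_*^T(X_d^\alpha)$ into an isomorphism. Finally, the paper invokes \cite[Lemma 6.14]{GoHo}, a lemma asserting that under this freeness a $\C[\ft]$-submodule sandwiched between $\Delta^d H_*^T(X_d)$ and its $\Delta$-localization must already equal $\Delta^d H_*^T(X_d)$. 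This freeness-plus-saturation argument is the genuine extra ingredient; without it (or some substitute) your proof does not reach the equality $\Delta^d H_*^T(X_d)=J^{(d)}$, and in particular does not establish freeness of $J^{(d)}$ over $\C[\ft]$, which is part of the claim.
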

\begin{proof}
By \cite[Lemma 5.12]{GKM2} and Corollary \ref{cor:localization}, we have that $H_*^T(X_d)=\bigcap_\alpha H_*^T(X_d^\alpha)\subset H_*^T(\Lambda)\otimes \C(\ft)$.
By equivariant formality and Corollary \ref{cor:localization}, we furthermore have that  $$\Delta^d\cdot H_*^T(X_d)\subset H^T_*(\Lambda)$$ is a free $\C[\ft]$-module.
Since $J_\alpha^d=y_\alpha^d H_*^T(\Sp_{t^dz}^\alpha)$ contains $\Delta$, we must have $\Delta^d\cdot H_*^T(X_d)\subseteq J_\alpha^d$ for all $\alpha$. Inverting $\Delta$, we see that $$\Delta^d\cdot H_*^T(X_d)_\Delta\cong \left(\bigcap_\alpha J_\alpha^d\right)_\Delta.$$ But $\Delta^d\cdot H_*^T(\Sp_{tz})$ was free over $\C[\ft]$, so by \cite[Lemma 6.14]{GoHo}, we have that $J^{(d)}=\Delta^d\cdot H_*^T(X_d)$.
\end{proof}

\begin{remark}
A priori, it is not at all obvious that $H_*^T(\Sp_{t^dz}^\alpha)$ would be a $\C[\Lambda]$-submodule of $H^T_*(\Lambda)$. The product structure on $H^T_*(\Lambda)$, while obvious in the algebraic statements, is geometrically a {\em convolution product}. In fact, it is the convolution product on the affine Grassmannian of $T$, as discussed in \cite{BFM}, and more recently \cite{BFN2} in the guise of a ''$3d$ $\cN=4$ Coulomb branch for $(T,0)$".
Moreover, it is also nontrivial that $y_\alpha^d H^T_*(\Sp_{t^dz}^\alpha)$ should have a natural subalgebra structure.
\end{remark}
\begin{remark}
It seems difficult to carry out analysis similar to Remark \ref{rmk:sl2schubert} for the case of general $G$. Erik Carlsson has informed us that he has performed computations related to $X_d$ using affine Schubert calculus (see also \cite{CO}). It would be interesting to relate the two approaches.
\end{remark}

\subsubsection{The affine flag variety}
In this section, we consider $Y_d=\tSp_\gamma,$ where $\gamma=zt^d$. We focus on the case $d=1$.
The $\tT$-fixed points of $Y_d$ are in a natural bijection with $\tW=\Lambda \rtimes W$. 
For $G=SL_2$, it is known that $Y_1$ is an infinite chain of projective lines again, and if we write elements of $\tW$ as $(k, w)$, $k\in \Z$, $w\in \{1, s\}$, there are one-dimensional orbits precisely between $(k,1)$ and $(k,s)$ as well as $(k+1,1)$ and $(k,s)$, see \cite[Section 13]{GKM2}. 
\begin{lemma}
When $G=SL_2$, we have that $H_*^{\tT}(Y_1)\subset H_*^{\tT}(\tW)$ is the $\C[y,t]$-linear span of the classes 
\begin{align*}
a_0&=(\ldots, 0,0,1,0,0,\ldots)\\
b_k&=(\ldots 0,0,\frac{1}{y+2kt},-\frac{1}{y+2kt},0,0,\ldots)\\
b_k'&=(\ldots 0,0,\frac{1}{y+(2k-1)t},0,0,-\frac{1}{y+(2k-1)t},0,0,\ldots)
\end{align*} 
where $b_k$ has nonzero entries at positions $(k,1)$ and $(k,s)$ and similarly $b_k'$ has nonzero entries at $(k,1)$ and $(k-1,s)$. In particular, by setting $t=0$, we get that 
$H_*^T(Y_1)$ is 
$$\left\lbrace \frac{1-s}{y}, \frac{1-x}{y}, 1 \right\rbrace \cdot \C[\Lambda]\otimes \C[\ft]\subset \C[\tW]\otimes \C[\ft].$$
\end{lemma}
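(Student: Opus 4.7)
My plan mirrors the proof of Lemma \ref{lem:sl2}. Following \cite[Section 13]{GKM2}, I would first record the GKM graph of $Y_1$: its vertices are $\tW = \Z \rtimes \{1,s\}$, with edges $(k,1)$--$(k,s)$ of weight $y + 2kt$ and $(k,s)$--$(k+1,1)$ of weight $y + (2k+1)t$, coming from the description of $Y_1$ as an infinite $\tT$-stable chain of projective lines. By Corollary \ref{cor:localization}, $H_*^{\tT}(Y_1)$ embeds in $H_*^{\tT}(\tW) \otimes \C(\ft)$ as the space of tuples with at most simple poles along the singular characters and vanishing residue sum on each edge.

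With this graph in hand, the residue conditions for $a_0$, $b_k$, $b_k'$ are immediate: each $b_k$ (respectively $b_k'$) is supported on the two endpoints of a single edge with character $\chi$, with entries $\pm 1/\chi$ whose residues at $\chi=0$ cancel, while $a_0$ is a regular scalar supported at one vertex. For the reverse inclusion, I would truncate to a closed $\tT$-stable sub-ind-scheme $Y_1^{\leq m}$ consisting of the finite chain of $\P^1$'s whose fixed points lie in a window of $\tW$ containing $(0,1)$. This is proper and equivariantly formal (each $\P^1$ has an affine paving), so $H_*^{\tT}(Y_1^{\leq m})$ is free over $\C[y,t]$ of rank equal to the number of $\tT$-fixed points in the window. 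Restricting $a_0$ together with those $b_k, b_k'$ supported in the window yields one point class at $(0,1)$ plus exactly one fundamental class per $\P^1$, matching that rank; linear independence over $\C[y,t]$ follows because each successive $b_k$ or $b_k'$ has support reaching a new fixed point not touched by any earlier class, so a triangular argument with respect to any linear extension of the chain order suffices. Passing to the colimit over $m$ establishes the first claim.

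For the $t=0$ specialization, using $H_*^T(Y_1) \cong H_*^{\tT}(Y_1)/t H_*^{\tT}(Y_1)$, the classes reduce to $a_0 = 1$, $b_k|_{t=0} = x^k(1-s)/y$ and $b_k'|_{t=0} = x^{k-1}(x-s)/y$ as formal expressions in $\C[\tW]\otimes \C(\ft)$. Using $(x-s)/y = (1-s)/y - (1-x)/y$, the $\C[\Lambda]\otimes \C[\ft]$-span of these elements coincides with that of $\{1, (1-s)/y, (1-x)/y\}$, which gives the second claim. The hard part will be Step~2: matching the linear independence of the chosen classes to the exact rank of $H_*^{\tT}(Y_1^{\leq m})$ at each truncation and then controlling the colimit, precisely as in the proof of Lemma~\ref{lem:sl2}.
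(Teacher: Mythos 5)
Your proposal is correct and follows the paper's own argument closely: you verify the GKM residue conditions for $a_0,b_k,b_k'$, you use a truncation to finite chains of $\P^1$'s plus a rank count and triangular linear-independence argument to obtain the reverse inclusion, and at $t=0$ you reduce $b_k,b_k'$ to $x^k(1-s)/y$ and $x^{k-1}(x-s)/y = x^{k-1}\bigl((1-s)/y - (1-x)/y\bigr)$, which is precisely the group-algebra identity the paper computes. The paper compresses the forward direction and the reverse inclusion into a one-line appeal to Proposition~\ref{prop:sl2d1} (and implicitly the truncation argument of Lemma~\ref{lem:sl2}), so your write-up supplies detail that the paper elides rather than a genuinely different route.
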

\begin{proof}
The residue conditions needed to apply Corollary \ref{cor:localization} are almost exactly the same as in Proposition \ref{prop:sl2d1}. The second claim follows from the fact that in $\C[\tW]$, we may compute $$-(1-s)\cdot (\lambda,1)+(1-\alpha^\vee)\cdot (\lambda,1)=
-(\lambda,1)+(\lambda,s)+(\lambda,1)-(\lambda+1,1)=(\lambda,s)-(\lambda+1,1)= -b_k'|_{t=0}y.$$
\end{proof}
\begin{corollary}
\label{cor:affineflagsl2}
Let $y_\alpha\in \C[\ft]=H^*_T(pt)$ be the linear functional corresponding to $\alpha$ and 
$Y_d^\alpha:=\tSp_{zt^d}^\alpha:=\tSp_{zt^d}\cap \Fl_{H^\alpha}$. For any $G$ and $\alpha\in \Phi^+(G,T)$, we have
$$\tJ_\alpha:=y_\alpha H_*^T(Y_1^\alpha)=\left\lbrace 1-s_\alpha, 1-x^{\alpha^\vee}, y_\alpha \right\rbrace \cdot \C[\Lambda]\otimes \C[\ft]\subset \C[\tW]\otimes \C[\ft].$$ 
\end{corollary}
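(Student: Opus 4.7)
The plan is to reduce to the preceding $SL_2$ computation, mimicking the strategy of Corollary \ref{cor:rk1}. The variety $Y_1^\alpha = \tSp_{zt}\cap \Fl_{H^\alpha}$ is an unramified valuation-one affine Springer fiber inside the affine flag variety of the semisimple rank-one subgroup $H^\alpha \subset G$ corresponding to $\alpha^\vee$, so $H^\alpha$ is isogenous to $SL_2$ or $PGL_2$. Applying the analogue of \cite[Lemma 5.12]{GKM2} in the affine flag setting, the one-dimensional $\tT$-orbits of $\tSp_{zt}$ that lie in $Y_1^\alpha$ are exactly those whose weights are proportional to $\alpha$.

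The first step is to identify the $\tT$-fixed points of $Y_1^\alpha$ as a subset of $\tW$: they form a disjoint union of ``rank-one strings'' indexed naturally by $\Lambda/\langle \alpha^\vee\rangle \times W/\langle s_\alpha\rangle$, each string being a copy of $\langle \alpha^\vee\rangle \rtimes \langle s_\alpha\rangle \cong \Z \rtimes \Z/2$. The second step is to verify that the GKM edges and their weights on each such string match precisely the $SL_2$ picture of the preceding lemma under the substitutions $y \mapsto y_\alpha$, $x \mapsto x^{\alpha^\vee}$, $s \mapsto s_\alpha$; this is essentially the content of the rank-one reduction together with the $SL_2$ computation of \cite[Section 13]{GKM2}. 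In particular, the ``vertical'' edges (coming from the affine reflections adjacent to $s_\alpha$) contribute the weights $y_\alpha + k t$ for appropriate integers $k$, just as in the preceding lemma.

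Given these two ingredients, Corollary \ref{cor:localization} (applied component-by-component and then with $t$ set to zero) identifies $H_*^T(Y_1^\alpha)$ with the $\C[\Lambda] \otimes \C[\ft]$-linear span of $\{(1-s_\alpha)/y_\alpha,\, (1-x^{\alpha^\vee})/y_\alpha,\, 1\}$ inside $\C[\tW]\otimes \C[\ft]$, and multiplication by $y_\alpha$ yields the claimed ideal $\tJ_\alpha$. The main obstacle lies in the bookkeeping of the $\tT$-fixed-point decomposition of $Y_1^\alpha$ inside $\tW$ and in checking that the isogeny type of $H^\alpha$ (i.e.\ $SL_2$ versus $PGL_2$) does not affect the answer; fortunately its only effect is on the cocharacter lattice indexing the connected components, which is absorbed into the $\C[\Lambda]$-module structure once everything is written inside $\C[\tW]\otimes\C[\ft]$. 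No computation beyond the preceding $SL_2$ lemma is needed.
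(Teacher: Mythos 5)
Your approach mirrors the paper's: reduce to the $SL_2$ computation of the preceding lemma, read off the GKM data string by string, then set $t=0$. However, there is a concrete error in your bookkeeping of the fixed-point set. You claim that $(Y_1^\alpha)^{\tT}$ is all of $\tW$, decomposed into strings indexed by $\Lambda/\langle\alpha^\vee\rangle\times W/\langle s_\alpha\rangle$. But $Y_1^\alpha$ is defined as $\tSp_{zt}\cap\Fl_{H^\alpha}$, and the embedding $\Fl_{H^\alpha}\hookrightarrow\Fl_G$ has $\tT$-fixed points $\tW(H^\alpha)=\Lambda\rtimes\langle s_\alpha\rangle$, a \emph{proper} subset of $\tW=\Lambda\rtimes W$ once $G$ has semisimple rank $>1$. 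Hence $(Y_1^\alpha)^{\tT}=\Lambda\rtimes\langle s_\alpha\rangle$, and the chains are indexed by $(\Lambda\rtimes\langle s_\alpha\rangle)/(\langle\alpha^\vee\rangle\rtimes\langle s_\alpha\rangle)\cong\Lambda/\langle\alpha^\vee\rangle$, exactly as the paper states. Your indexing overcounts by a factor of $|W/\langle s_\alpha\rangle|$.

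This overcount also makes your argument internally inconsistent: if the strings really were indexed by $\Lambda/\langle\alpha^\vee\rangle\times W/\langle s_\alpha\rangle$, then translating the $SL_2$ classes along all those strings would produce the $\C[\tW]\otimes\C[\ft]$-span (i.e.\ the left span under all of $\C[W]$) of $\{(1-s_\alpha)/y_\alpha,\,(1-x^{\alpha^\vee})/y_\alpha,\,1\}$, a rank-$|W|$ module over $\C[\Lambda]\otimes\C[\ft]$, not the rank-$2$ $\C[\Lambda]\otimes\C[\ft]$-span you (and the Corollary) actually write down. With the correct fixed-point set $\Lambda\rtimes\langle s_\alpha\rangle$, the $\C[\Lambda]\otimes\C[\ft]$-span does exhaust everything, and the identity follows from the preceding lemma exactly as you outline. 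Your treatment of the isogeny type of $H^\alpha$ is fine -- since the relevant cocharacter lattice is $X_*(T)=\Lambda$ for the full torus $T\subset H^\alpha$, not the cocharacter lattice of the derived group, the $SL_2$ versus $PGL_2$ issue never actually arises.

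Aside from this one misidentification, the remaining steps (rank-one reduction, matching the GKM edge weights with the $SL_2$ picture under $y\mapsto y_\alpha$, $x\mapsto x^{\alpha^\vee}$, $s\mapsto s_\alpha$, and applying Corollary \ref{cor:localization} followed by $t\mapsto 0$) are correct and are the same route the paper takes.
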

\begin{proof}
This is similar to Corollary \ref{cor:rk1} and \cite[page 547]{GKM2}. The affine Springer fiber $Y^\alpha_1$ is again a disjoint union of infinite chains of projective lines indexed by $\Lambda/\langle \alpha^\vee\rangle$. From this fact and the previous Corollary, we get that 
$H_*^T(Y^\alpha_1)$ is the $\C[\ft]$-linear span of $\frac{x^\lambda (1-x_\alpha)}{y_\alpha}, \frac{(1-s_\alpha)x^\lambda}{y_\alpha}$ and $1$. Multiplying by $y_\alpha$, we get the result.
\end{proof}
\begin{theorem}
\label{thm:affineflagmain}
For any reductive group $G$, 
$$\Delta \cdot H_*^{T}(Y_1)=\bigcap_\alpha \tJ_\alpha \subset \C[\tW]\otimes \C[\ft]$$
and furthermore $\tJ_G$ is a free module over $\C[\ft]$. Here $\Delta=\prod_\alpha y_\alpha$ as before.
\end{theorem}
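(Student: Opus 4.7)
The plan is to mirror the proof of Theorem \ref{thm:mainthm} almost verbatim, with the affine Grassmannian replaced by the affine flag variety. The essential new inputs are the rank-one computation already handled in Corollary \ref{cor:affineflagsl2} and the affine-flag analogue of the one-dimensional orbit decomposition from \cite[Lemma 5.12]{GKM2}.

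First I would invoke the $\tSp_\gamma$-version of \cite[Lemma 5.12]{GKM2}: the union of one-dimensional $\tT$-orbits on $Y_1$ decomposes as $\bigcup_{\alpha\in \Phi^+} (Y_1^\alpha)_1$, and for distinct roots $\alpha,\beta$ the loci $Y_1^\alpha$ and $Y_1^\beta$ meet only in the fixed-point set $\tW$. Combining this with the (infinite GKM) Corollary \ref{cor:localization} gives the identification
$$H_*^T(Y_1) \;=\; \bigcap_{\alpha\in \Phi^+} H_*^T(Y_1^\alpha) \;\subset\; H_*^T(\tW)\otimes_{\C[\ft]} \C(\ft).$$
Multiplying through by $\Delta=\prod_\alpha y_\alpha$ and using Corollary \ref{cor:affineflagsl2} to identify $y_\alpha H_*^T(Y_1^\alpha)$ with $\tJ_\alpha$, we obtain the inclusion $\Delta\cdot H_*^T(Y_1)\subseteq \bigcap_\alpha \tJ_\alpha$.

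For the reverse inclusion I would follow the localization trick from the proof of Theorem \ref{thm:mainthm}. Equivariant formality of $Y_1$ (which is part of the purity package established by Goresky--Kottwitz--MacPherson \cite{GKM3}) ensures that $H_*^T(Y_1)$, and hence $\Delta\cdot H_*^T(Y_1)$, is a free $\C[\ft]$-module. After inverting $\Delta$, both $\Delta\cdot H_*^T(Y_1)_\Delta$ and $(\bigcap_\alpha \tJ_\alpha)_\Delta$ agree with the full localization $H_*^T(\tW)_\Delta$, since the fixed-point pullback becomes an isomorphism after inverting the finitely many characters appearing on the one-dimensional orbits. Because $\Delta\cdot H_*^T(Y_1)$ is free over $\C[\ft]$ and agrees with $\bigcap_\alpha\tJ_\alpha$ after inverting $\Delta$, the lemma \cite[Lemma 6.14]{GoHo} upgrades the inclusion to equality, establishing the first claim. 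Freeness of $\tJ_G=\bigcap_\alpha\tJ_\alpha$ over $\C[\ft]$ is then immediate from the same equivariant formality statement.

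The main obstacle is the verification of the $\tSp_\gamma$-version of the one-dimensional orbit decomposition; the proof reduces, exactly as in \cite[Lemma 8.1]{GKM2}, to the semisimple rank one setting where it follows from the explicit $SL_2$ analysis of the infinite chain of $\P^1$'s described just before Corollary \ref{cor:affineflagsl2}. The only other delicate point is checking that Corollary \ref{cor:localization} applies despite $\tW$ being infinite, but since each Schubert-closed piece $Y_1^{\leq m}$ is $\tT$-stable with finitely many fixed points and finitely many one-dimensional orbits, one takes the colimit along $m$ as in the proof of Corollary \ref{cor:localization}.
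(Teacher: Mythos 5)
Your argument is correct and is exactly the expansion of the paper's one-line proof ("entirely similar to Theorem \ref{thm:mainthm}"): you establish the intersection formula $H_*^T(Y_1)=\bigcap_\alpha H_*^T(Y_1^\alpha)$ via the affine-flag version of the GKM orbit decomposition and Corollary \ref{cor:localization}, use Corollary \ref{cor:affineflagsl2} for the rank-one inputs, and then combine equivariant formality (freeness over $\C[\ft]$), the localization-by-$\Delta$ comparison, and \cite[Lemma 6.14]{GoHo} to upgrade the inclusion to an equality, just as in the proof of Theorem \ref{thm:mainthm}.
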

\begin{proof}
The proof is entirely similar to Theorem \ref{thm:mainthm}.
\end{proof}
\begin{remark}
It is not at all clear from this description whether $\Delta \cdot H_*^{\tT}(Y_1)$ has an algebra structure. Based on Conjecture \ref{conj:affineflag} and the fact that there is a (noncommutative) algebra structure when $d=0$, it seems that this could be the case. 
\end{remark}

\subsubsection{Equivariant K-homology}
In this section, we state a version of Theorem \ref{thm:mainthm} in K-homology. We omit detailed proofs because they are entirely parallel to those in previous sections. 

In \cite{HHH}, more general equivariant cohomology theories, such as the equivariant K-theory of (reasonably nice) $T$-varieties is studied from the GKM perspective. Let $K^T(X)$ be the equivariant (topological) K-theory of a $T$-variety $X$. Following Proposition \ref{prop:dualizingmodule}, {\em define} the equivariant K-homology of $X$ as $$\Hom_{R(T)}(K^T(X),R(T)),$$ where $R(T)$ is the representation ring of $T$ over $\C$. In particular, fixing an isomorphism $T\cong \G_m^n$, we have $R(T)\cong \C[y_1^\pm,\ldots, y_n^\pm]$.

Adapting the description of \cite[Theorem 3.1]{HHH}, Proposition \ref{localization}, and Lemma \ref{cor:localization}, we have an analogue of Corollary \ref{cor:localization} in K-homology.

\begin{proposition}
Let $X$ be an equivariantly formal GKM $T$-ind-scheme. Then 
$K^T(X)\subset K^T(X^T)\otimes \C(\ft)$ consists of all tuples 
$(\omega_x)_{x\in X^T}$ of rational differential forms on $T$  satisfying the following conditions. 
\begin{enumerate}
\item The poles of each $\omega_x$ are contained in the union of singular divisors (i.e. of the form $\{y^\chi=1\}$ and have order at most one.
\item For any singular character $\chi$ and for any connected component $Y$ of $X^{\ker \chi}$, we have $$\Res_{y^\chi=1}\left(\sum_{x\in Y^T}\omega_x\right)=0.$$
\end{enumerate}
\end{proposition}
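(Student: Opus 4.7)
The plan is to mirror the proof of Corollary \ref{cor:localization} step by step, with equivariant Borel--Moore homology replaced by the K-theoretic dual $K^T_*(X) := \Hom_{R(T)}(K^T(X), R(T))$. The roles of $\C[\ft]$, the differentials $d\chi$, and residues at hyperplanes $\{d\chi = 0\} \subset \ft$ are played here by $R(T)$, the class functions $1 - y^\chi$, and residues along the divisors $\{y^\chi = 1\} \subset T$.

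First I would handle the finite-dimensional case. For each projective equivariantly formal GKM piece $X_i$ of $X = \varinjlim X_i$, the K-theoretic GKM theorem (Theorem 3.1 of \cite{HHH}) identifies $K^T(X_i)$ inside $K^T(X_i^T) = \prod_{x \in X_i^T} R(T)$ as those tuples $(f_x)$ satisfying, for every one-dimensional orbit joining $p, q$ with weight $\chi$, the congruence $f_p \equiv f_q \pmod{1 - y^\chi}$. By equivariant formality, $K^T(X_i)$ is a free $R(T)$-module, so $\Hom_{R(T)}(-, R(T))$ is exact and dualizing the fixed-point inclusion yields an embedding $K^T_*(X_i) \hookrightarrow K^T_*(X_i^T) \otimes_{R(T)} \mathrm{Frac}\, R(T)$ that becomes an isomorphism after inverting the finitely many relevant $1 - y^\chi$ (the K-theoretic Atiyah--Bott localization). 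Under the natural $R(T)$-linear residue pairing along $\{y^\chi = 1\}$, the congruence conditions dualize to precisely the two conditions in the statement: simple poles only along singular divisors, and vanishing of the sum of residues at $y^\chi = 1$ over the $T$-fixed points of each connected component of $X_i^{\ker \chi}$. This is the direct K-theoretic analog of Proposition \ref{localization}, along the lines of \cite[Proposition 3, Corollary 1]{Bri}.

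Passing to the colimit is then straightforward: the closed immersions $X_i \hookrightarrow X_{i+1}$ induce, upon dualizing, injections $K^T_*(X_i) \hookrightarrow K^T_*(X_{i+1})$ compatible with fixed-point restrictions, and any tuple satisfying the global residue conditions already satisfies them on some finite $X_i^T$, hence arises from $K^T_*(X_i)$. Exactness of filtered colimits concludes the argument exactly as in the proof of Corollary \ref{cor:localization}. The main obstacle will be making the duality between K-cohomological congruences and K-homological residues both precise and covariant: one must verify that the residue pairing is $R(T)$-linear and additive over the $T$-fixed points of each connected component of $X^{\ker \chi}$, so that the resulting local-to-global residue relation truly captures the dual of the GKM congruence condition. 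In the Borel--Moore setting this is automatic from Verdier duality and the classical residue formalism on $\ft$; here one must instead appeal to equivariant coherent duality along the divisor $\{y^\chi = 1\} \subset T$, together with equivariant formality of each component $Y \subset X^{\ker \chi}$. Once that K-theoretic residue formalism is in place the rest of the argument is purely formal.
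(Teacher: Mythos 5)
Your proposal follows exactly the route the paper indicates: the paper explicitly omits the proof, stating only that one should ``adapt'' \cite[Theorem 3.1]{HHH} together with Proposition \ref{localization} (Brion's dualization of the GKM/Chang--Skjelbred description) and Corollary \ref{cor:localization} (the colimit step), and your plan carries out precisely that transport --- GKM congruences mod $1-y^\chi$ dualize under $\Hom_{R(T)}(-,R(T))$ to simple-pole-plus-residue conditions along $\{y^\chi=1\}$, then pass to the filtered colimit. The one place where your write-up is slightly looser than necessary is the appeal to ``equivariant coherent duality'' for the residue pairing: what is really used is just the elementary commutative-algebra fact that, for each singular $\chi$, the pairing between $R(T)/(1-y^\chi)$ and the $(1-y^\chi)$-torsion of $\mathrm{Frac}\,R(T)/R(T)$ is perfect and $R(T)$-linear, which is the exact analogue of the residue pairing along $\{d\chi=0\}\subset\ft$ used in Brion's argument; once phrased that way no extra formalism is needed.
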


From this, it directly follows that we have the following complementary versions of Theorems \ref{thm:mainthm} and \ref{thm:affineflagmain}.
\begin{theorem}
\label{thm:mainthmktheory}
Let $\Delta'=\prod_{\alpha\in\Phi^+}(1-y^\alpha)\in R(T)$ be the Vandermonde element. The equivariant $K$-homology of $X_d:=\Sp_{t^dz}$ for a reductive group $G$ is up to multiplication by $(\delta')^d$ canonically isomorphic as a $\C[\Lambda]\otimes R(T)$-module to the ideal 
$$(J')^{(d)}:=\bigcap_{\alpha\in \Phi+} (J_\alpha')^d\subset \C[\Lambda]\otimes R(T).$$ Here $J'_\alpha:=\langle 1-y^\alpha, 1-x^{\alpha^\vee}\rangle$. The algebra structure on $(\Delta')^dH_*^T(\Sp_\gamma)$ is given by the convolution product on $K^T(\Lambda)$
\end{theorem}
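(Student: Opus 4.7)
The plan is to mirror, step by step, the proof of Theorem~\ref{thm:mainthm}, replacing every use of Corollary~\ref{cor:localization} by the K-theoretic GKM criterion stated just above. Since the author explicitly flags that details are parallel, the task is to identify the K-theoretic replacement at each step and to check that the residue arithmetic still goes through.

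First I would establish the rank-one reduction
\[
K^T_*(X_d) \;=\; \bigcap_{\alpha \in \Phi^+} K^T_*(X_d^\alpha)
\]
inside $K^T_*(\Lambda) \otimes \mathrm{Frac}(R(T))$. The decomposition of one-dimensional $\tT$-orbits by positive roots in \cite[Lemma 5.12]{GKM2} is purely a statement about the GKM graph, so it applies verbatim and reduces the computation to the semisimple rank one case exactly as in Corollary~\ref{cor:rk1}. In the $SL_2$ case I would then rerun the proof of Lemma~\ref{lem:sl2}: the GKM graph is still an infinite chain of complete graphs on $d+1$ vertices on the vertex set $\Lambda$, the edge between $k+j$ and $k+j'$ now carries the character $y^\alpha q^{2k+j+j'}$ (which restricts to $y^\alpha$ on the non-rotation torus $T$), and the basic K-theoretic classes supported on a window of $d+1$ adjacent fixed points take the form $(1-x^{\alpha^\vee})^j (1-y^\alpha)^{d-j}$ divided by the product of $(1-y^\alpha)$-factors coming from the other vertices of the window. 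The residue identity along the divisor $\{y^\alpha = 1\}$ reduces, after clearing denominators, to exactly the same binomial calculation as in Lemma~\ref{lem:sl2}. Multiplying through by $(1-y^\alpha)^d$ then identifies
\[
(1-y^\alpha)^d\, K^T_*(X_d^\alpha) \;=\; (J'_\alpha)^d \;=\; \langle 1-y^\alpha,\; 1-x^{\alpha^\vee}\rangle^d.
\]

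Taking the intersection over $\alpha \in \Phi^+$ gives the containment $(\Delta')^d K^T_*(X_d) \subseteq (J')^{(d)}$, with equality after inverting $\Delta'$. To upgrade to equality before localization, I would apply the K-theoretic analogue of \cite[Lemma 6.14]{GoHo}, which requires $(\Delta')^d K^T_*(X_d)$ to be free over $R(T)$. The algebra structure on the left-hand side is then manifest from the identification with an ideal in $\C[\Lambda] \otimes R(T)$, and matches the convolution product on $K^T_*(\Lambda)$ from \cite{BFM, BFN2}.

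The step I expect to be the main obstacle is the freeness of $(\Delta')^d K^T_*(X_d)$ over $R(T)$. In equivariant Borel--Moore homology this comes from the equivariant formality of $\Sp_\gamma$ via the purity result of Goresky--Kottwitz--MacPherson \cite{GKM3}; the K-theoretic counterpart is not packaged as cleanly in the literature. The cleanest route seems to be to deduce it from a $\tT$-equivariant affine paving of $X_d$ induced by the Bia\l ynicki--Birula decomposition underlying \cite{GKM3}, which simultaneously forces freeness in both $H^T_*$ and $K^T_*$ with matching ranks; alternatively, one can argue abstractly that the rank of the free part is computed combinatorially from the GKM graph, which is the same in both theories. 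Granting this freeness, the remainder of the proof of Theorem~\ref{thm:mainthm} transplants verbatim to give the K-theoretic statement.
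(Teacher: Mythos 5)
Your proposal is correct and follows the same route the paper (implicitly) takes: the author explicitly states the proof is ``entirely parallel'' to that of Theorem~\ref{thm:mainthm}, and your step-by-step transplant --- rank-one reduction via \cite[Lemma 5.12]{GKM2}, the K-theoretic re-run of Lemma~\ref{lem:sl2} with the multiplicative characters $y^\alpha q^{2k+j+j'}$ replacing $y_\alpha+(2k+j+j')t$, the identification $(1-y^\alpha)^d K^T_*(X_d^\alpha) = (J'_\alpha)^d$, and passage to the intersection via localization and \cite[Lemma 6.14]{GoHo} --- is exactly that parallel. Your flagging of the freeness of $K^T(X_d)$ over $R(T)$ as the one step without a clean citation is a legitimate observation; the paper relies on the GKM framework of \cite{HHH}, which supplies this from the $\tT$-stable paving of $\Sp_\gamma$, and your proposed resolution (Bia\l ynicki--Birula / affine paving, giving matching ranks in $H^T_*$ and $K^T$) is precisely the standard way to fill it in.
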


\begin{theorem}
\label{thm:affineflagktheory}
For any reductive group $G$, 
$$\Delta \cdot K^{T}(Y_1)=\bigcap_\alpha \tJ_\alpha' \subset \C[\tW]\otimes R(T).$$ Here $$\tJ_\alpha'=\left\lbrace 1-x^{\alpha^\vee},1-y^\alpha,1-s_{\alpha}\right\rbrace\C[\Lambda]\otimes R(T)\subset \C[\tW]\otimes R(T).$$
\end{theorem}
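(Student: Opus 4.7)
The plan is to follow the proof of Theorem~\ref{thm:affineflagmain} line by line, replacing cohomological GKM localization with its K-theoretic version and the additive Vandermonde $\Delta$ by the multiplicative one $\Delta'=\prod_{\alpha\in\Phi^+}(1-y^\alpha)\in R(T)$ (I read the $\Delta$ appearing in the statement as $\Delta'$, matching Theorem~\ref{thm:mainthmktheory}).

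First I would invoke \cite[Lemma~5.12]{GKM2}, which decomposes the one-dimensional $\tT$-orbits as $(Y_1)_1=\bigcup_{\alpha\in\Phi^+}(Y_1^\alpha)_1$, with pairwise intersections equal to the fixed set $\tW=Y_1^{\tT}$. Combined with the K-theoretic GKM description stated just above, this yields
$$K^T(Y_1)=\bigcap_{\alpha\in\Phi^+}K^T(Y_1^\alpha)\subset K^T(\tW)\otimes\C(T),$$
so it suffices to compute each $K^T(Y_1^\alpha)$ and then intersect.

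Next I would handle the rank-one piece. By \cite[Lemma~8.1]{GKM2} the fiber $Y_1^\alpha\subset\Fl_{H^\alpha}$ breaks as a disjoint union of infinite chains of $\P^1$'s indexed by $\Lambda/\langle\alpha^\vee\rangle$, and the weights of $\tT$ on its one-dimensional orbits are the multiplicative analogues of the affine real roots appearing in \cite[Section~13]{GKM2}. Applying the K-theoretic residue criterion to the two kinds of edges (the ``horizontal'' edges between $(k,1)$ and $(k,s_\alpha)$, and the ``cross'' edges between $(k+1,1)$ and $(k,s_\alpha)$) and then clearing the common denominator $1-y^\alpha$ should yield, in direct parallel to Corollary~\ref{cor:affineflagsl2},
$$(1-y^\alpha)\cdot K^T(Y_1^\alpha)=\bigl\langle 1-x^{\alpha^\vee},\,1-y^\alpha,\,1-s_\alpha\bigr\rangle\cdot(\C[\Lambda]\otimes R(T))=\tJ'_\alpha.$$

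Finally, each $Y_1^\alpha$ is equivariantly formal and GKM, so $\Delta'\cdot K^T(Y_1)$ is free over $R(T)$. Intersecting over $\alpha$ and inverting $\Delta'$ gives $\bigl(\Delta'\cdot K^T(Y_1)\bigr)_{\Delta'}\cong \bigl(\bigcap_\alpha\tJ'_\alpha\bigr)_{\Delta'}$, and freeness, combined with the K-theoretic analogue of \cite[Lemma~6.14]{GoHo}, should upgrade this localized equality to an honest equality $\Delta'\cdot K^T(Y_1)=\bigcap_\alpha\tJ'_\alpha$. The main obstacle I expect is the rank-one step: one must carefully replace the additive residues at $y_\alpha=-mt$ used in the cohomological $SL_2$ computation with multiplicative residues at $y^\alpha x^{m\alpha^\vee}=1$, and verify that the interaction between the lattice and Weyl parts of $\tW$ still produces precisely the third generator $1-s_\alpha$ along the ``cross'' edges; checking that the freeness criterion of \cite[Lemma~6.14]{GoHo} goes through verbatim in K-theory is a secondary but necessary issue.
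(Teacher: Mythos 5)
Your proposal matches the paper's approach exactly: the paper explicitly omits a detailed proof here, stating only that the K-theoretic results are ``entirely parallel'' to Theorems \ref{thm:mainthm} and \ref{thm:affineflagmain}, and what you have written is precisely that parallel argument spelled out — the K-theoretic GKM localization, the reduction via \cite[Lemma 5.12, Lemma 8.1]{GKM2} to rank one, the identification of $(1-y^\alpha)K^T(Y_1^\alpha)$ with $\tJ'_\alpha$ mirroring Corollary \ref{cor:affineflagsl2}, and the final localize-and-use-freeness step via \cite[Lemma 6.14]{GoHo}. Your reading of the $\Delta$ in the statement as the multiplicative Vandermonde $\Delta'=\prod_{\alpha\in\Phi^+}(1-y^\alpha)$ is also the intended one, consistent with Theorem \ref{thm:mainthmktheory}.
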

\section{The isospectral Hilbert scheme}
\label{sec:isospectral}
\subsection{Definitions}
In this section, we define the relevant Hilbert schemes of points and list some of their properties. We then discuss the relationship of the results in Section \ref{sec:affinespringer} to the Hilbert scheme of points and the isospectral Hilbert scheme.

\begin{definition}
The Hilbert scheme of points on the complex plane, denoted $\Hilb^n(\C^2)$, is defined as the moduli space of length $n$ subschemes of $\C^2$. Its closed points are given by $$\{I\subset \C[x,y]|\dim_\C \C[x,y]/I=n\},$$ where $I$ is an ideal.
\end{definition}
\begin{definition}
The isospectral Hilbert scheme $X_n$ is defined as the following  reduced fiber product:
$$\begin{tikzcd}
X_n \arrow[d, "\rho"] \arrow[r]& \C^{2n}\arrow[d,"\cdot/\Sn"]\\
\Hilb^n(\C^2)\arrow[r,"\sigma"] &\C^{2n}/\Sn
\end{tikzcd}$$
\end{definition}
We have the following localized versions of these statements.
\begin{definition}\label{def:localized hilb definition}
The Hilbert scheme of points on $\C^*\times \C$ is the moduli space of length $n$ subschemes of $\C^*\times \C$. 
\end{definition}
Note that $\C^*\times \C$ is affine, so that the closed points of $\Hilb^n(\C^*\times \C)$ are given by $\{I\subset \C[x^\pm,y]|\dim_\C \C[x^\pm,y]/I=n, I \text{ ideal}\}$. In fact, $\Hilb^n(\C^*\times\C)$ is naturally identified with the preimage $\pi^{-1}((\C^*\times \C)^n/\Sn)$ under the Hilbert-Chow map 
$$\Hilb^n(\C^2)\to \C^{2n}/\Sn.$$
\begin{definition}\label{def: localized isospectral definition}
The isospectral Hilbert scheme on $\C^*\times \C$ is denoted $Y_n$, and defined to be the following reduced fiber product:
$$\begin{tikzcd}
Y_n \arrow[d, "\rho"] \arrow[r]& (\C^*\times \C)^n\arrow[d,"\cdot/\Sn"]\\
\Hilb^n(\C^*\times \C)\arrow[r,"\sigma"] &(\C^*\times \C)^n/\Sn
\end{tikzcd}$$
\end{definition}

Let $A=\C[\bx,\by]^{sgn}$ be the space of alternating polynomials. This is to be interpreted in two sets of variables, ie. taking the $sgn$-isotypic part for the diagonal action. We recall the following theorem of Haiman.

\begin{theorem}[\cite{Hai1}]
Consider the ideal $I\subset \C[\bx,\by]$ generated by $A$. Then for all $d\geq 0$, \begin{equation}\label{eq:anti-invts equal diagonals}
I^d=J^{(d)}=\bigcap_{i\neq j}\langle x_i-x_j, y_i-y_j\rangle^d\subseteq 
\C[\textbf{x},\textbf{y}].
\end{equation}
Moreover, $I^d$ is a free $\C[\by]$-module, and by symmetry, a free $\C[\bx]$-module. 
\end{theorem}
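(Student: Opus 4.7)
My plan is to follow Haiman's original strategy via the Polygraph Theorem. For the easy inclusion $I^d\subseteq J^{(d)}$, any $\Sn$-alternating polynomial $f\in A$ vanishes identically on the pairwise diagonal $\{x_i=x_j,\,y_i=y_j\}$, hence lies in $\langle x_i-x_j,\,y_i-y_j\rangle$ for every $i\neq j$. A $d$-fold product of elements of $A$ therefore lies in the $d$-th power of each such ideal, so $I^d\subseteq J^{(d)}$.

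For the reverse inclusion and the freeness, I would introduce the polygraph variety
\[
Z(\ell, n)=\bigcup_{f:[\ell]\to [n]} W_f \subset \C^{2\ell}\times \C^{2n},
\]
where $W_f=\{(\mathbf{a},\mathbf{b},\bx,\by)\mid a_i=x_{f(i)},\ b_i=y_{f(i)}\}$, and study its reduced coordinate ring $R(\ell,n)$. The central technical step is to prove by induction on $\ell$ that $R(\ell,n)$ is a free $\C[\by]$-module, by constructing an explicit combinatorial basis. Concretely, I would set up a short exact sequence relating $R(\ell,n)$ to $R(\ell-1,n)$ via specialization of the ``last point'' coordinates $(a_\ell,b_\ell)$, and use the inductive hypothesis together with an explicit lift of a basis to show that the sequence splits as a sequence of $\C[\by]$-modules.

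Given freeness of $R(d,n)$ over $\C[\by]$, I would identify $I^d$ with the sign-isotypic component of $R(d,n)$ under the natural $\mathfrak{S}_d$-action on the $(\mathbf{a},\mathbf{b})$ variables; this yields freeness of $I^d$ over $\C[\by]$ (and symmetrically over $\C[\bx]$). Matching generating functions with $J^{(d)}$, whose $\bx$-localization is controlled by Theorem \ref{thm:freeness}, then forces the equality $I^d=J^{(d)}$.

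The principal obstacle is the polygraph freeness itself: the variety $Z(\ell,n)$ is a union of many linear subspaces whose intersection pattern is intricate, so neither the existence of the inductive short exact sequence nor its splitting is automatic. Both steps demand delicate commutative-algebra bookkeeping to control torsion introduced by specialization, and this is precisely where Haiman's proof does its real work. A more conceptual replacement -- for example, combining the localized freeness of Theorem \ref{thm:freeness} with a direct argument that neither $J^{(d)}$ nor $I^d$ has $\bx$-torsion and that they coincide after inverting $\bx$ -- would bypass the polygraph machinery, but would still require an independent input to force the equality $I^d=J^{(d)}$ globally rather than only away from the coordinate hyperplanes.
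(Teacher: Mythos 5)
The paper does not prove this theorem---it is cited directly from Haiman's polygraph paper \cite{Hai1}, so there is no proof in the present text to compare against. Your sketch accurately describes the shape of Haiman's argument: the easy inclusion $I^d\subseteq J^{(d)}$ by vanishing on the pairwise diagonals, and the hard inclusion together with freeness obtained from freeness of the polygraph coordinate ring $R(\ell,n)$ over $\C[\by]$, established by Haiman's delicate induction on $\ell$ with an explicit common basis. You correctly flag that essentially all of the real content lives in the polygraph freeness, which your plan treats as a black box; that is a fair assessment of the difficulty.

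One caution on the proposed ``conceptual bypass.'' Theorem \ref{thm:freeness} of this paper proves (independently of Haiman) that $\Delta^d H_*^T(\Sp_\gamma) \cong J^{(d)}_\bx$ and that $J^{(d)}$ is free over $\C[\by]$, but it says nothing about the ideal $I$ generated by the alternants. The only statement in the paper about $I$ that does not cite Haiman is the easy inclusion $I_G^d\hookrightarrow J_G^{(d)}$ of Theorem \ref{thm:introanti}. So the paper's machinery does not even give you $I^d_\bx = J^{(d)}_\bx$, let alone the unlocalized equality. Even if you had the localized equality, the cokernel $J^{(d)}/I^d$ could a priori be a nonzero $\bx$-torsion module supported on the coordinate hyperplanes, and ruling this out is precisely where a genuinely global input (such as the polygraph theorem, or a matching of bigraded Hilbert series which itself ultimately rests on polygraph-type machinery) is required. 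In short, the affine-Springer description in the paper recovers freeness of $J^{(d)}$, but it cannot substitute for Haiman in establishing $I^d = J^{(d)}$. You acknowledge this limitation in your final paragraph, which is the right level of caution.
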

\begin{remark}
$J^{(d)}$ is not free over $\C[\bx,\by]$.
\end{remark}
We have the following corollary to Theorem \ref{thm:mainthm}, as stated earlier.
\begin{corollary}
The ideal $J^{(d)}\subset \C[\bx,\by]$ is free over $\C[\by]$.
\end{corollary}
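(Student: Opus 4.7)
The plan is to use the additive translation invariance of $J^{(d)} = I^d \subset \C[\bx, \by]$ in the $\bx$-variables to descend the freeness of the localized ideal $J^{(d)}_{GL_n} = I^d_\bx \subset \C[\bx^\pm, \by]$ over $\C[\by]$, which is given by Theorem \ref{thm:mainthm} in the case $G = GL_n$, to the un-localized ideal $I^d$.

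First I would observe that $I^d$ is stable under the $\G_a$-action $x_i \mapsto x_i + c$ on $\C[\bx, \by]$, since the ideal $I$ is generated by alternating polynomials for the diagonal $\Sn$-action and these are visibly $\bx$-translation invariant. Writing $R_0 := \C[\bx, \by]^{\G_a} = \C[x_2 - x_1, \ldots, x_n - x_1, \by]$, we obtain $\C[\bx, \by] = R_0 \otimes_\C \C[x_1]$, and since $I^d$ is generated by $\G_a$-invariants already lying in $R_0$, one has the decomposition $I^d = I_0 \otimes_\C \C[x_1]$ with $I_0 := I^d \cap R_0$. This immediately reduces freeness of $I^d$ over $\C[\by]$ to freeness of $I_0$ over $\C[\by]$.

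The next step is faithful flat descent. The extension $R_0 \hookrightarrow R_\bx := \C[\bx^\pm, \by]$ is faithfully flat: $R_\bx$ is the localization of $R_0[x_1]$ at the multiplicative set generated by $x_1$ and $x_1 + (x_i - x_1)$ for $i = 2, \ldots, n$, and each fiber of $\Spec R_0[x_1] \to \Spec R_0$ is an affine line from which only finitely many points have been removed. Moreover $I^d_\bx = I_0 \otimes_{R_0} R_\bx$, and for any $\C[\by]$-module $N$ the flatness of $R_\bx$ over $R_0$ yields a natural isomorphism $\mathrm{Tor}^{\C[\by]}_i(N, I^d_\bx) \cong \mathrm{Tor}^{\C[\by]}_i(N, I_0) \otimes_{R_0} R_\bx$. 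Faithful flatness then allows us to deduce from the vanishing of the left-hand side for $i \geq 1$ (which holds by Theorem \ref{thm:mainthm}) the vanishing of $\mathrm{Tor}^{\C[\by]}_i(N, I_0)$, so that $I_0$ is $\C[\by]$-flat. Decomposing $I_0 = \bigoplus_i I_0^{(i)}$ by $\bx$-degree (which is preserved by the $\C[\by]$-action) presents $I_0$ as a direct sum of finitely generated $\C[\by]$-submodules of the free $\C[\by]$-modules $R_0^{(i)}$; each $I_0^{(i)}$ is then a finitely generated $\C[\by]$-flat, hence $\C[\by]$-free, module, and summing gives $I_0$, and therefore $I^d$, free over $\C[\by]$.

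The main obstacle I expect is the descent step: one must carefully track the two compatible module structures on $I_0$ (over $R_0$ and over $\C[\by]$) in order to commute $\mathrm{Tor}^{\C[\by]}$ with the $R_0$-base change. While this commutation is routine once arranged, it is the heart of the argument, substituting the equivariant formality of $\Sp_\gamma$ underlying Theorem \ref{thm:mainthm} for the delicate commutative-algebra content of Haiman's Polygraph Theorem.
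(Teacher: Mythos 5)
Your proof is correct and is essentially a careful, fully-detailed execution of the translation-invariance argument the paper sketches in one line in the introduction; the faithfully flat descent along $R_0\hookrightarrow\C[\bx^\pm,\by]$ is exactly the right way to pass from the localized freeness of Theorem \ref{thm:mainthm} back to $J^{(d)}$. A small wording point: alternating polynomials are not individually $\bx$-translation invariant (only the \emph{space} of alternants is translation-stable), but this is immaterial, since $J^{(d)}=\bigcap_{i\ne j}\langle x_i-x_j,y_i-y_j\rangle^d$ is visibly $\G_a$-stable because each generator $x_i-x_j,y_i-y_j$ is fixed by $x_i\mapsto x_i+c$, so one needs neither Haiman's identity $I^d=J^{(d)}$ nor the covariance of alternants to get started.
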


The ideals $I^d=J^d=J^{(d)}$ and the space of alternating polynomials naturally emerge in the study of Hilbert schemes of points on the plane. 
\begin{theorem}\label{thm: blow-up description hilb}
The schemes $\Hilb^n(\C^2)$ and $X_n$ admit the following descriptions:
\begin{equation}
\Hilb^n(\C^2)\cong \Proj\left(\bigoplus_{d\geq 0} A^d\right)
\end{equation}
and 
\begin{equation}
X_n\cong \Proj\left(\bigoplus_{d\geq 0} J^d\right).
\end{equation}
\end{theorem}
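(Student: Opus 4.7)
The plan is to derive both isomorphisms from Haiman's foundational work, invoking the results already quoted in the discussion above. Although the first statement appeared informally in the exposition preceding the theorem, I would formalize it with an ampleness argument rather than leaving it as a citation.

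For the first isomorphism, I would work with the tautological rank-$n$ bundle $\cB$ on $\Hilb^n(\C^2)$ whose fiber at a length-$n$ subscheme $[I]$ is $\C[\bx,\by]/I$, and consider the line bundle $\cL:=\det\cB$. The two key points to establish are: (i) $\cL$ is relatively ample for the Hilbert--Chow morphism $\sigma:\Hilb^n(\C^2)\to \C^{2n}/\Sn=\Spec\C[\bx,\by]^{\Sn}$; and (ii) there is a natural isomorphism $\sigma_*\cL^{\otimes d}\cong A^d$ of graded $\C[\bx,\by]^{\Sn}$-modules, obtained by sending an alternating polynomial to the section of $\cL$ it determines via the tautological bundle. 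Both are recorded in \cite[Proposition 2.6]{HaiMacdonald}. Combining them with the standard Proj construction for a relatively ample line bundle on a projective morphism yields $\Hilb^n(\C^2)\cong \Proj\bigoplus_d A^d$.

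For the second isomorphism, I would use the reduced fiber product diagram from the definition of $X_n$. Pulling $\cL$ back along $\rho:X_n\to \Hilb^n(\C^2)$ gives a line bundle $\cL'$ on $X_n$, and the plan is to compute its pushforward to $\C^{2n}$. Flat base change along $\C^{2n}\to \C^{2n}/\Sn$ (together with the reducedness built into the definition of $X_n$) identifies this pushforward, in degree $d$, with
\[ A^d\otimes_{\C[\bx,\by]^{\Sn}}\C[\bx,\by]. \]
The natural multiplication map embeds this tensor product into $\C[\bx,\by]$ with image the $d$-th power of the ideal $I$ generated by the alternating polynomials. By the theorem of Haiman quoted above as Eq.~\eqref{eq:anti-invts equal diagonals}, $I^d=J^d$, and invoking Proj once more gives $X_n\cong \Proj\bigoplus_d J^d$.

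The hard part is not the Proj formalism but the commutative-algebraic input. Haiman's polygraph theorem \cite{Hai1} simultaneously supplies the identification $I^d=J^d$, the requisite freeness of $I^d$ over $\C[\bx]$ and $\C[\by]$, and the reducedness of the fiber product $\C^{2n}\times_{\C^{2n}/\Sn}\Hilb^n(\C^2)$ used to define $X_n$. Any independent proof of the second isomorphism would have to reprove some version of the polygraph theorem; taking it as a black box, however, reduces both statements to a routine application of relative Proj for an ample line bundle.
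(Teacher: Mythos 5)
The paper's own proof of this theorem is a one-line citation to \cite[Proposition 2.6]{Haiqt} (and implicitly to \cite{Hai1} for the second isomorphism), so there is no detailed argument in the paper to compare against. Your treatment of the first isomorphism—$\cL=\det\cB$ is $\sigma$-ample, $\sigma_*\cL^{\otimes d}\cong A^d$, then relative Proj—is exactly Haiman's argument and is fine.

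Your argument for the second isomorphism has a genuine gap. You invoke flat base change along $\C^{2n}\to\C^{2n}/\Sn$, but this morphism is \emph{not} flat: the transposition $(ij)$ acts on $\C^{2n}$ with fixed locus of codimension two, so $\Sn$ is not generated by pseudoreflections, and by Chevalley--Shephard--Todd the invariant ring $\C[\bx,\by]^{\Sn}$ is singular and the quotient map fails to be flat. Likewise the Hilbert--Chow morphism is a proper birational map and is not flat either, so no version of flat base change applies to the square defining $X_n$. As a result the identification of $\tilde f_*\cL'^{\otimes d}$ with $A^d\otimes_{\C[\bx,\by]^{\Sn}}\C[\bx,\by]$ (and then, via the multiplication map, with $I^d$) is exactly the hard content, not a formal consequence. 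In fact, the statement $H^0(\Hilb^n(\C^2),\cP\otimes\cO(d))\cong J^d$ that would result \emph{is} a nontrivial corollary of the $n!$/polygraph machinery, and even the injectivity of the multiplication map $A^d\otimes_{\C[\bx,\by]^{\Sn}}\C[\bx,\by]\to I^d$ needs proof. Haiman's actual route (see \cite[Prop.\ 3.4.2 and \S3]{Hai1}) does not go through base change: it constructs a morphism from the Rees blowup $\Proj\bigoplus_d J^d$ to the fiber product directly, and then uses normality and integral closure of the ideals $J^d$ (together with the reducedness of the fiber product, itself a theorem) to show it is an isomorphism. If you want a self-contained proof, you should replace the base-change step with that argument, or simply cite Haiman as the paper does.
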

\begin{proof}
See \cite[Proposition 2.6]{Haiqt}.
\end{proof}
\begin{corollary}
\label{cor:trighilb}
We have
\begin{equation}
\Hilb^n(\C^*\times \C)\cong \Proj\left(\bigoplus_{d\geq 0} A^d_\bx\right)
\end{equation}
and
\begin{equation}
Y_n\cong \Proj\left(\bigoplus_{d\geq 0} J^d_\bx\right),
\end{equation}
where the subscript $\bx$ denotes localization in the $x_i$.
\end{corollary}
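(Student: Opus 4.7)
The plan is to derive Corollary \ref{cor:trighilb} from Theorem \ref{thm: blow-up description hilb} by a simple localization argument, using that $\Hilb^n(\C^*\times\C)$ and $Y_n$ are naturally open subschemes of $\Hilb^n(\C^2)$ and $X_n$, respectively.

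First I would identify the relevant open loci. By Definition \ref{def:localized hilb definition} and the remark following it, $\Hilb^n(\C^*\times\C)$ equals $\sigma^{-1}((\C^*\times\C)^n/\Sn)$, the preimage of the open subset $(\C^*\times\C)^n/\Sn\subset \C^{2n}/\Sn$ under the Hilbert--Chow morphism. Similarly, by Definition \ref{def: localized isospectral definition}, $Y_n\subset X_n$ is the open subscheme obtained as the preimage of $(\C^*\times\C)^n\subset \C^{2n}$. Under the identifications of Theorem \ref{thm: blow-up description hilb}, the Hilbert--Chow map is the structure morphism $\Proj(\bigoplus_{d\geq 0}A^d)\to \Spec A^0=\C^{2n}/\Sn$, and the map $X_n\to \C^{2n}$ is the structure morphism $\Proj(\bigoplus_{d\geq 0}J^d)\to \Spec J^0=\C^{2n}$.

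Next I would invoke the standard fact that for a graded ring $R=\bigoplus_{d\geq 0}R^d$ and a homogeneous element $f\in R^0$, the preimage in $\Proj(R)$ of the basic open $D(f)\subset \Spec R^0$ is canonically $\Proj(R_f)=\Proj\bigl(\bigoplus_{d\geq 0}(R^d)_f\bigr)$. Taking $f=e_n(\bx)=x_1\cdots x_n$, which lies in both $A^0=\C[\bx,\by]^{\Sn}$ and $J^0=\C[\bx,\by]$, and observing that $(\C^*\times\C)^n/\Sn=D(e_n(\bx))\subset \C^{2n}/\Sn$ (and similarly for $(\C^*\times\C)^n\subset\C^{2n}$), this yields
\begin{equation*}
\Hilb^n(\C^*\times\C)\cong \Proj\Bigl(\bigoplus_{d\geq 0}(A^d)_{e_n(\bx)}\Bigr),\qquad Y_n\cong \Proj\Bigl(\bigoplus_{d\geq 0}(J^d)_{e_n(\bx)}\Bigr).
\end{equation*}

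Finally I would match these with the corollary as stated. Since each $x_i$ divides $e_n(\bx)$, inverting $e_n(\bx)$ is the same as inverting each $x_i$ individually, so $(A^d)_{e_n(\bx)}$ and $(J^d)_{e_n(\bx)}$ are the images of $A^d$ and $J^d$ in $\C[\bx^\pm,\by]$, which by definition are $A^d_\bx$ and $J^d_\bx$. As already remarked earlier in this section, since $e_n(\bx)$ is symmetric, an element of $\C[\bx^\pm,\by]$ is alternating if and only if its numerator (taken over a power of $e_n(\bx)$) is alternating in $\C[\bx,\by]$, so the localization is compatible with the alternating condition defining $A$. No step here presents a real obstacle; the only points requiring care are the (elementary) verification that $\Proj$ commutes with localization at a degree-zero element, and the observation that inverting the single symmetric element $e_n(\bx)$ is equivalent to inverting all of the $x_i$.
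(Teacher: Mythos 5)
Your proof is correct and follows essentially the same route as the paper: identify $\Hilb^n(\C^*\times\C)$ and $Y_n$ as the open preimages of $(\C^*\times\C)^n/\Sn$ and $(\C^*\times\C)^n$ under the respective structure maps, then pass to the localization. The paper phrases the final step by citing the fact that blow-ups commute with restriction to open subsets (Stacks Project 30.30.3) and noting that $(J^{(d)})_\bx = J^{(d)}_\bx$ since localization commutes with intersection; you instead spell out the equivalent algebraic fact that for a graded ring $R$ with degree-zero part $R^0$ and $f\in R^0$ one has $\Proj(R)\times_{\Spec R^0} D(f)\cong\Proj(R_f)$, applied to $f=e_n(\bx)$. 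This is the same observation in a slightly more explicit form, and your remark that inverting the single symmetric element $e_n(\bx)$ is the same as inverting all $x_i$ is exactly the point that makes the localization compatible with both the $\Sn$-equivariance on the Hilbert scheme side and the definition of $A^d_\bx$, $J^d_\bx$ in the statement.
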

\begin{proof}
Both of these equations describe blow-ups; the first along the diagonals in $(\C^*\times\C)^n/\Sn$ and the second along the diagonals
in $(\C^*\times \C)^n$. Note that $(J^{(d)})_\bx=J^{(d)}_\bx$ since localization commutes with intersection. Since blowing up commutes with restriction to open subsets \cite[Lemma 30.30.3]{StacksProject}, Theorem \ref{thm: blow-up description hilb} gives the result.
\end{proof}

There are several relevant sheaves on $\Hilb^n(\C^2)$ and $X_n$ that 
relate to $H_*^T(\Sp_\gamma)$ and $H_*^T(\tSp_\gamma)$ naturally. From 
the Proj construction we naturally get very ample line bundles $\cO_?(1)$ on both $?=X_n$ and $?=\Hilb^n(\C^2)$. Note that it is immediate 
from the construction that $$\cO_{X_n}(1)=\rho^*\cO_{\Hilb^n(\C^2)}(1).
$$ On $\Hilb^n(\C^2)$ there is also a tautological rank $n$ bundle $\cT
$ whose fiber at $I$ is given by $\C[\bx,\by]/I$. Its determinant 
bundle can be shown to equal $\cO(1)$.

As noted before, $\Hilb^n(\C^*\times \C)$ is the preimage under the Hilbert-Chow map of $(\C^*\times \C)^n/\Sn$, it is a (Zariski) open subset of $\Hilb^n(\C^2)$. Similarly, $Y_n=\rho^{-1}(\Hilb^n(\C^*\times\C))\subset X_n$ is an open subset.
Restriction then gives very ample line bundles $$\cO_{Y_n}(1)=\cO_{X_n}(1)|_{Y_n},\;
\cO_{\Hilb^n(\C^*\times\C)}(1)=\cO_{\Hilb^n(\C^2)}(1)|_{\Hilb^n(\C^*\times\C)}.$$

\begin{definition}
\label{def:procesi}
Let $\cO_{X_n}$ be the structure sheaf of the isospectral Hilbert scheme. Define the {\em Procesi bundle} $\cP:=\rho_*\cO_X$  on $\Hilb^n(\C^2)$.
\end{definition}
In particular, $H^0(\Hilb^n(\C^2),\cP\otimes \cO(d))=J^d$.
\begin{theorem}[The $n!$ theorem, \cite{Hai1}]
The Procesi bundle is locally free of rank $n!$ on $\Hilb^n(\C^2)$.
\end{theorem}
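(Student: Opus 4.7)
The plan is to reduce the theorem to flatness of $\rho: X_n \to \Hilb^n(\C^2)$ via miracle flatness, compute the generic rank of $\cP$ directly by counting orderings, and extract the required Cohen--Macaulayness of $X_n$ from the polygraph freeness already invoked from \cite{Hai1}. Once the first two steps are in place, the heart of the argument is the Cohen--Macaulayness claim, which is where the real difficulty lies.

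First I would determine the generic rank of $\cP$. Over the Zariski-open locus $U \subset \Hilb^n(\C^2)$ parametrizing unordered configurations of $n$ distinct points in $\C^2$, the morphism $\rho^{-1}(U) \to U$ is a reduced \'etale $\Sn$-cover: an unordered $n$-tuple of distinct points admits precisely $n!$ orderings, and each lifts uniquely to a point of the fiber product of Definition \ref{def: localized isospectral definition}. Hence $\cP|_U$ is locally free of rank $n!$. By Fogarty's theorem $\Hilb^n(\C^2)$ is smooth of dimension $2n$, and $\rho$ is finite since $\C^{2n}\to \C^{2n}/\Sn$ is finite. Miracle flatness --- a finite morphism from an equidimensional Cohen--Macaulay scheme to a regular scheme of the same dimension is automatically flat --- would then reduce the whole theorem to the single assertion that $X_n$ is Cohen--Macaulay of pure dimension $2n$, since flatness combined with the generic rank computation forces $\rho_*\cO_{X_n}$ to be locally free of rank $n!$ everywhere.

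The main obstacle is thus the Cohen--Macaulayness of $X_n$. The plan is to derive it from the freeness of $J^{(d)}$ as a $\C[\by]$-module (cited earlier from \cite{Hai1}) together with the blow-up description $X_n \cong \Proj \bigoplus_{d\geq 0} J^d$. Freeness of each graded piece over $\C[\by]$ ensures that the composite $\by$-projection $X_n \to \Spec\C[\by] \cong \C^n$ is flat with fibers of pure dimension $n$. A direct fiberwise analysis using the explicit $\C[\by]$-generators of the $J^{(d)}$ shows these fibers are Cohen--Macaulay, and the fiberwise criterion over the regular base $\C^n$ upgrades this to Cohen--Macaulayness of $X_n$ in dimension $2n$. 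The genuine depth lies in the polygraph theorem powering the freeness of $J^{(d)}$: an intricate induction on reduced unions $Z(\ell,n)\subset \C^{2\ell}\times \C^{2n}$ of graph subvarieties, using carefully constructed short exact sequences to propagate the free $\C[\by]$-module property. It is this commutative-algebra input, rather than anything geometric about $\Hilb^n(\C^2)$, that carries the true weight of the argument.
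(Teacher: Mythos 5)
The paper itself does not prove this theorem; it is cited verbatim from Haiman \cite{Hai1}, so there is no ``paper's own proof'' to compare against beyond Haiman's original argument.

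Your overall architecture is the right one and does match Haiman's: compute the generic rank of $\cP$ over the locus of $n$ distinct points (an \'etale $\Sn$-cover, so $n!$), invoke Fogarty's smoothness of $\Hilb^n(\C^2)$ and finiteness of $\rho$, and apply miracle flatness to reduce everything to the assertion that $X_n$ is Cohen--Macaulay of pure dimension $2n$. These reductions are correct. The trouble is in how you propose to obtain Cohen--Macaulayness. You assert that ``a direct fiberwise analysis using the explicit $\C[\by]$-generators of the $J^{(d)}$ shows these fibers are Cohen--Macaulay,'' and then invoke the fiberwise criterion over the regular base $\C^n$. But no such direct analysis is given, and this is precisely where the actual difficulty lives. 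The fibers of $X_n \to \Spec\C[\by]$ over non-generic points --- above all the fiber over $\by = 0$, which parametrizes (ordered) length-$n$ subschemes supported on the $x$-axis together with the blow-up data --- are highly singular, and their Cohen--Macaulayness is not something you can read off from a $\C[\by]$-basis of the Rees algebra. Indeed, if Cohen--Macaulayness of these fibers admitted a ``direct'' proof, the $n!$ theorem would be far easier than it is; in practice one would more likely deduce Cohen--Macaulayness of the zero fiber \emph{from} that of $X_n$ rather than the other way around.

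Haiman's actual passage from the Polygraph Theorem to Cohen--Macaulayness and Gorensteinness of $X_n$ is substantially more delicate than a fiberwise criterion: it establishes arithmetic normality of $X_n$, identifies the canonical sheaf $\omega_{X_n} \cong \cO_{X_n}(-1)$ using the blow-up structure and local duality, and combines these with the free $\C[\by]$-module structure (and, by the $\bx\leftrightarrow\by$ symmetry, the dual statement) to extract the needed depth. That chain of arguments occupies most of Section~3 of \cite{Hai1} and is not a corollary of flatness over $\C^n$ plus a look at the fibers. So while your miracle-flatness reduction and your generic-rank computation are both sound, the step you describe in one sentence --- ``these fibers are Cohen--Macaulay'' --- is a genuine gap: it is where the theorem's content is concentrated, and the mechanism you gesture at does not supply it.
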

Localizing the ideal $J$ at $\bx$, we get the following result.
\begin{proposition}
\label{prop:procesi}
Let $\gamma=zt^d\in \fg\fl_n\otimes \cK$ as before. Then
\begin{equation}
H^0(\Hilb^n(\C\times \C^*),\cP\otimes \cO(d))=J^{(d)}_\bx\cong \Delta^dH_*^T(\Sp_\gamma).
\end{equation}
\end{proposition}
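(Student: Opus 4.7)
The plan is to deduce Proposition \ref{prop:procesi} by combining three ingredients: (i) the \textbf{Proj} description of $Y_n$ from Corollary \ref{cor:trighilb}, (ii) the projection formula applied to $\rho|_{Y_n}$, and (iii) Theorem \ref{thm:mainthm} identifying $J_{GL_n}^{(d)}$ (localized in $\bx$) with $\Delta^d H_*^T(\Sp_\gamma)$. The substantive input is Haiman's work on $\Hilb^n(\C^2)$ and $X_n$, which guarantees that the graded ring $\bigoplus_d J^d$ behaves well enough that taking global sections of $\cO(d)$ recovers the degree-$d$ component.

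First I would set up the comparison of Procesi bundles. Since $\Hilb^n(\C^*\times \C)$ is open in $\Hilb^n(\C^2)$ and $Y_n = \rho^{-1}(\Hilb^n(\C^*\times\C))$ is its preimage, flat base change (for the open immersion) gives
\begin{equation*}
\cP|_{\Hilb^n(\C^*\times\C)} \;=\; (\rho_*\cO_{X_n})|_{\Hilb^n(\C^*\times\C)} \;=\; (\rho|_{Y_n})_*\, \cO_{Y_n}.
\end{equation*}
Next, because $\cO_{X_n}(1) = \rho^*\cO_{\Hilb^n(\C^2)}(1)$ (stated right after Definition \ref{def:procesi}), restriction gives $\cO_{Y_n}(1) = (\rho|_{Y_n})^*\cO_{\Hilb^n(\C^*\times \C)}(1)$. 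Applying the projection formula to $\rho|_{Y_n}$ yields
\begin{equation*}
(\rho|_{Y_n})_*\,\cO_{Y_n}(d) \;\cong\; \cP|_{\Hilb^n(\C^*\times\C)} \otimes \cO_{\Hilb^n(\C^*\times\C)}(d),
\end{equation*}
so that $H^0(\Hilb^n(\C^*\times\C), \cP\otimes \cO(d)) \cong H^0(Y_n, \cO_{Y_n}(d))$.

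Second, I would compute the latter via the Proj description. By Corollary \ref{cor:trighilb}, $Y_n \cong \Proj\bigoplus_{d\geq 0} J^d_\bx$. To identify $H^0(Y_n,\cO_{Y_n}(d))$ with the degree-$d$ piece of this graded ring, I would invoke Haiman's result that $X_n$ is normal and Cohen--Macaulay (a consequence of the $n!$ theorem), which implies that the graded ring $\bigoplus_d J^d$ is equal to its own saturation; localizing in $\bx$ preserves this. Thus $H^0(Y_n, \cO_{Y_n}(d)) = J^d_\bx$. Combined with Haiman's identity $J^d = J^{(d)}$ from Eq.~\eqref{eq:anti-invts equal diagonals}, this gives
\begin{equation*}
H^0(\Hilb^n(\C^*\times\C), \cP\otimes \cO(d)) \;=\; J^{(d)}_\bx.
\end{equation*}

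Finally, I would invoke Theorem \ref{thm:mainthm} (in the form of Theorem \ref{thm:freeness}) to identify $J^{(d)}_\bx$ with $\Delta^d H_*^T(\Sp_\gamma)$ for $\gamma = zt^d$, completing the proof. The only non-bookkeeping point is the claim that global sections of $\cO_{Y_n}(d)$ are given by $J^d_\bx$ on the nose (not just for $d \gg 0$); this is where Haiman's deep results on the isospectral Hilbert scheme enter, and it is the only step that is not formal manipulation. Everything else reduces to projection formula and the explicit \textbf{Proj} presentations already recorded in this section.
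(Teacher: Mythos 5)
Your proof is correct and follows essentially the same three-step structure as the paper: (1) identify $H^0(\Hilb^n(\C^*\times\C),\cP\otimes\cO(d))$ with $H^0(Y_n,\cO_{Y_n}(d))$, (2) identify the latter with $J^d_\bx$, (3) invoke Theorem~\ref{thm:mainthm}/\ref{thm:freeness}. Where you and the paper differ is only in bookkeeping: you spell out step (1) via flat base change and the projection formula (the paper dismisses it as true ``by definition''), and for step (2) you gesture at Cohen--Macaulayness and saturation of $\bigoplus_d J^d$, whereas the paper simply observes that $Y_n\subset X_n$ is a principal open subset cut out by inverting $\prod x_i$, so that restriction of global sections is literally localization of $H^0(X_n,\cO(d))=J^d$; the latter is the more elementary and explicit formulation, and both ultimately rest on the same Haiman input ($H^0(X_n,\cO(d))=J^d$, stated just after Definition~\ref{def:procesi}).
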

\begin{proof}
We have by definition that
$$H^0(\Hilb^n(\C\times \C^*),\cP\otimes \cO(d))=H^0(Y_n,\cO_{Y_n}(d)).$$ Since $Y_n\subset X_n$ is in fact a principal open subset determined by $\prod_{i=1}^n x_i\in \C[\bx^\pm, \by]^{\Sn}$, restriction to the open subset coincides with localization. So we get 
$$H^0(Y_n,\cO_{Y_n}(d))=J_\bx^{(d)}.$$
By Theorem \ref{thm:mainthm}, we conclude $$H^0(\cP\otimes \cO(d), \Hilb^n(\C^*\times \C))\cong \Delta^dH_*^T(\Sp_\gamma).$$
\end{proof}

Although it is not clear to us what the cohomology of the affine Springer fiber $\tSp_\gamma$ in $\Fl_G$ describes in these terms, we make the following conjecture.
\begin{conjecture}
\label{conj:affineflag}
As graded $\C[y_1,\ldots,y_n]$-modules, we have 
\begin{equation}
H^0(\cP\otimes \cP^*\otimes \cO(d), \Hilb^n(\C^*\times \C))\cong
\Delta^d \cdot H_*^T(\tSp_{zt^d}).
\end{equation}
\end{conjecture}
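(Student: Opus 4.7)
The plan is to extend the proof of Proposition \ref{prop:procesi} from the affine Grassmannian to the affine flag variety, working in parallel with how Theorem \ref{thm:affineflagmain} extends Theorem \ref{thm:mainthm}. The strategy breaks into three main steps.

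First, I would generalize Theorem \ref{thm:affineflagmain} from $d=1$ to arbitrary $d\geq 1$, obtaining a GKM-type description
$$\Delta^d \cdot H_*^T(\tSp_{zt^d}) \cong \bigcap_{\alpha\in \Phi^+}\widetilde{J}_\alpha^{(d)}\subset \C[\tW]\otimes \C[\ft],$$
where each $\widetilde{J}_\alpha^{(d)}$ is the rank-one ideal determined by $H_*^T(\tSp_{zt^d}^\alpha)$. By \cite[Lemma 5.12]{GKM2} the reduction to semisimple rank one is automatic, so the work is to analyze the $SL_2$ case for general $d$. In that case $\tSp_{zt^d}$ is (a union of) chains of Bott--Samelson-like varieties of length $d$ glued along smaller affine Spaltenstein varieties, and one computes the local contributions to the GKM graph using Corollary \ref{cor:localization}, in the style of Lemma \ref{lem:sl2}. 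Freeness over $\C[\ft]$ is then a consequence of equivariant formality, as before.

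Second, I would interpret $H^0(\Hilb^n(\C^*\times \C), \cP\otimes \cP^*\otimes \cO(d))$ geometrically. Using the projection formula and the fact that $\cP=\rho_*\cO_{X_n}$ is fiberwise the regular representation of $\Sn$, one obtains
$$H^0(\Hilb^n(\C^*\times \C), \cP\otimes \cP^*\otimes \cO(d))\cong H^0(Y_n\times_{\Hilb^n(\C^*\times\C)} Y_n,\mathcal{L}(d))$$
for a suitable line bundle $\mathcal{L}(d)$ pulled back from $\Hilb^n(\C^*\times\C)$. The doubled isospectral scheme has generic fiber $\Sn\times \Sn$ over $(\C^*\times \C)^n/\Sn$, which is the right combinatorial count to match the $|\Sn|^2$ pairs of $\tilde{T}$-fixed points in $\tSp_{zt^d}$ lying over a common pair of lattice points under the affine Bott--Samelson-type projection $\tSp_{zt^d}\to \Sp_{zt^d}\times_{\text{Sym}^n} \Sp_{zt^d}$.

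Third, after localizing in the $\bx$ variables to restrict from $X_n$ to $Y_n$, I would identify both sides explicitly. Both are free $\C[\by]$-modules of the same rank (on the left by Step 1 and equivariant formality, on the right by Haiman's $n!$ theorem together with Corollary \ref{cor:trighilb}), and the $\Sn\times\Sn$-action on both sides provides enough structure to pin down an isomorphism, analogously to the argument following Theorem \ref{thm:mainthm}.

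The main obstacle will be Step 2: precisely matching the $\Sn\times\Sn$-equivariant structure on $\cP\otimes \cP^*$ with the structure on $\tW$-indexed localization data from $\tSp$. The ``internal'' $\Sn$-symmetry of $\cP$ (from the $\Sn$-cover $X_n\to \Hilb^n(\C^2)$) should correspond to the $W$-component of $\tW=\Lambda\rtimes W$ via the Bruhat decomposition of $\Fl_G$, while the ``external'' $\Sn$ (acting trivially on $\Hilb^n$) should match the Springer-theoretic $W$-action on $H_*^T(\tSp_\gamma)$. Establishing this dictionary compatibly with the ideal description from Step 1 is the crux of the conjecture; it is closely related to (but strictly stronger than) the Bezrukavnikov--Qi--Shan--Vasserot conjecture mentioned in the introduction.
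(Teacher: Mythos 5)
This statement is a \emph{conjecture} in the paper, not a theorem: the author explicitly says ``Although it is not clear to us what the cohomology of the affine Springer fiber $\tSp_\gamma$ in $\Fl_G$ describes in these terms, we make the following conjecture,'' and the only justification offered is a remark pointing to heuristic size-matching and the trigonometric DAHA actions of Gordon--Stafford and Oblomkov--Yun. So there is no paper proof to compare against, and your ``proposal'' is a research program, not an argument; indeed you yourself flag Step 2 as unresolved and call it ``the crux of the conjecture.'' As written, this does not establish the claim.

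Two concrete gaps beyond the one you already acknowledge. First, Step 1 (extending Theorem \ref{thm:affineflagmain} to $d\ge 1$) is not actually in the paper: Lemma \ref{lem:sl2} handles arbitrary $d$ only for the Grassmannian, while the flag-variety computation (Corollary \ref{cor:affineflagsl2}, Theorem \ref{thm:affineflagmain}) is carried out only for $d=1$. The general-$d$ $SL_2$ GKM graph in the affine flag variety has more one-dimensional orbits and a more intricate residue pattern than you indicate, and it is not obvious that the ideal description stays as clean as $\bigcap_\alpha \tJ_\alpha^{(d)}$; this would need to be proved, not assumed. Second, in Step 2 you pass from $\cP\otimes\cP^*$ to sections over $Y_n\times_{\Hilb}Y_n$ ``using the projection formula,'' but the projection formula produces $\cP\otimes\cP\cong \rho_*\cO_{Y_n}\otimes\rho_*\cO_{Y_n}$, not $\cP\otimes\cP^*$. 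These differ by a nontrivial twist (Haiman's self-duality for the Procesi bundle involves $\cO(1)$ and the determinant of the tautological bundle), and the conjecture as stated uses $\cP^*$, so the identification you propose does not match the left-hand side without additional care. Finally, matching ranks of free $\C[\by]$-modules (Step 3) only shows abstract isomorphism of graded modules after one fixes gradings; it does not produce the canonical isomorphism the conjecture asserts, which is the whole content. As the remark following the conjecture in the paper suggests, the expected mechanism is that both sides are the same module for the trigonometric DAHA at $c=0$, and that structure--rather than a rank count--is what would pin the isomorphism down.
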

\begin{example}
When $d=0$, the above conjecture states $$H^0(\cP\otimes \cP^*,
\Hilb^n(\C^*\times \C))=\C[\tW]\otimes \C[\by]=\C[\bx^\pm, \by]\rtimes W\cong H_*^T(\tSp_{z}).$$
If it is also true for $d=1$, Theorem \ref{thm:affineflagmain} implies that $$H^0(\cP\otimes \cP^*\otimes \cO(1), \Hilb^n(\C^*\times \C))\cong \tJ_{GL_n}.$$
\end{example}
\begin{remark}
The motivation for Conjecture \ref{conj:affineflag} is as follows. In \cite{GoSt}, Gordon and Stafford relate $J_n^{(d)}$ and the Procesi bundle to the spherical representation of the trigonometric DAHA in type A. For $d=1$, the antisymmetrized version of this representation has the same size (as an $S_n$-representation) as $\cP\otimes \cP$, as does $H_*^T(\tSp_{tz})$. Since 
$H_*^T(\tSp_{tz})$ also carries a trigonometric DAHA-action (at $c=0$) by results of Oblomkov-Yun \cite{OY1}, it is plausible to conjecture that it is ''the same" module as the Gordon-Stafford construction would give.\end{remark}

\subsection{Diagonal coinvariants and a conjecture on the lattice action}
\label{sec:diagonalcoinvariants}
When $G=GL_n$, 
it is known that the fibers of the Procesi bundle $\cP$, as introduced in the previous section, at torus-fixed 
points in $\Hilb^n(\C^2)$ afford the regular representation of $\Sn$ \cite{Hai1}, and in particular have dimension $n!$. On the other hand, 
they appear as quotients of the ring of {\em diagonal coinvariants} (sometimes also called diagonal harmonics)
$$DR_n:=\C[\bx,\by]/\C[\bx,\by]_+^{\Sn},$$ which is now known to be $(n+1)^{n-1}$-
dimensional. Additionally, it is known that the isotypic component  $DH^{sgn}_n$ has dimension $C_n$, where $C_n$ is the $n$th Catalan number, and that its bigraded character is given by $$(e_n,\nabla e_n).$$ Here $(-,-)$ is the Hall inner product on symmetric 
functions over $\Q(q,t)$ and $e_j$ denotes the $j$th elementary symmetric function. The operator $\nabla$ is the nabla operator introduced by Garsia and Bergeron \cite{GaBe}.

As far as the relation with affine Springer theory goes, from work of Oblomkov-Yun, Oblomkov-Carlsson and Varagnolo-Vasserot \cite{OY1}, \cite{CO}, \cite{VV1}, it follows that we have, up to regrading, 
$$H^*(\tSp_{\gamma'})\cong DR_n, H^*(\Sp_{\gamma'})\cong DR_n^{sgn},$$ where $\gamma'$ is an endomorphism of $\cK^n=\text{span}\{e_1,\ldots,e_n\}_{\cK}$ given by $\gamma'(e_i)=e_{i+1}, i=1,\ldots, n-1$ and $\gamma'(e_{n})=te_1$. Note that in this case, $\gamma'$ is elliptic so that $\Sp_{\gamma'}$ and $\tSp_{\gamma'}$ are projective schemes of finite type  and thus their cohomologies are finite dimensional. In fact, after adding some equivariance to the picture the cohomologies in question become the finite-dimensional representations of the trigonometric and rational Cherednik algebras with parameter $c=\frac{n+1}{n}.$

It is a conjecture of Bezrukavnikov-Qi-Shan-Vasserot (private communication) that under the lattice action of $\Lambda$ on 
$H^*(\tSp_\gamma)$, where $\gamma=zt$, we also have 
$$H^*(\tSp_\gamma)^\Lambda\cong DR_n$$ and $$H^*(\Sp_\gamma)^\Lambda\cong DR_n^{sgn}.$$

So far, we are not able to prove this conjecture, but can deduce the sign character part as follows.

\begin{theorem}
\label{thm:bezconj}
We have $$H_*(\Sp_\gamma)_\Lambda \cong DR_n^{sgn}.$$
\end{theorem}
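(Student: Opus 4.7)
My plan is to derive the theorem from Theorem~\ref{thm:mainthm} (specialized to $d=1$) combined with Haiman's identification $J=I$, where $I$ is the ideal generated by diagonally alternating polynomials. From Theorem~\ref{thm:mainthm} we have an isomorphism of $\C[\bx^\pm,\by]$-modules
\[
H_*^T(\Sp_\gamma)\;\xrightarrow{\,\sim\,}\;J_\bx\;\subset\;\C[\bx^\pm,\by],
\]
induced by multiplication by $\Delta$, where $J=\bigcap_{i\neq j}\langle x_i-x_j,y_i-y_j\rangle$. Under the GKM embedding into $H_*^T(\Sp_\gamma^T)=\C[\Lambda]\otimes\C[\ft]$, the natural translation action of $\Lambda$ on $\Sp_\gamma$ corresponds to multiplication by monomials $x^\lambda\in\C[\bx^\pm]$, so $\Lambda$-coinvariants amount to modding out by $(x_1-1,\ldots,x_n-1)$. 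Passing from equivariant to ordinary Borel--Moore homology (using equivariant formality, \cite{GKM3}) corresponds to modding out $(y_1,\ldots,y_n)=\C[\ft]_+$.

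First I would combine these two quotient operations. They commute because the $\Lambda$-action commutes with the $\C[\ft]$-module structure on $H_*^T(\Sp_\gamma)$. Moreover, the maximal ideal $(\bx-\mathbf{1},\by)$ is supported in the open locus $(\C^*)^n\times\C^n$, so $\bx$-localization is trivial there, giving $H_*(\Sp_\gamma)_\Lambda\cong J/(\bx-\mathbf{1},\by)\,J$. Since $J$ depends only on the differences $x_i-x_j$, the translation $x_i\mapsto x_i+1$ preserves $J$ and induces an isomorphism $J/(\bx-\mathbf{1},\by)J\;\xrightarrow{\,\sim\,}\;J/(\bx,\by)J$.

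The final and main step is to identify $J/(\bx,\by)J$ with $DR_n^{\mathrm{sgn}}$ as bigraded $S_n$-modules. Using Haiman's $J=I$, there is a natural $S_n$-equivariant surjection $A\twoheadrightarrow J/(\bx,\by)J$: any $f\in J$ writes as $\sum_i a_ip_i$ with $a_i\in A$, $p_i\in\C[\bx,\by]$, and modulo $(\bx,\by)J$ we may replace each $p_i$ by its constant term. To identify the kernel I would use an antisymmetrization (Reynolds-operator) argument, together with the $S_n$-stability of $J$, to show $A\cap(\bx,\by)J=\mathfrak{m}^{S_n}_+\cdot A$, where $\mathfrak{m}^{S_n}_+=(\C[\bx,\by]^{S_n})_+$. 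Granting this,
\[
J/(\bx,\by)J \;\cong\; A/\mathfrak{m}^{S_n}_+\cdot A \;=\; DR_n^{\mathrm{sgn}},
\]
since $A$ is the $\mathrm{sgn}$-isotypic component of $\C[\bx,\by]$ and $\mathfrak{m}^{S_n}_+\cdot A$ is the $\mathrm{sgn}$-isotypic component of the ideal $\mathfrak{m}^{S_n}_+\cdot\C[\bx,\by]$ defining $DR_n$.

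The subtle point is the kernel computation: $(\bx,\by)$ is strictly larger than $\mathfrak{m}^{S_n}_+\cdot\C[\bx,\by]$, so showing that every alternating element in $(\bx,\by)J$ lies in $\mathfrak{m}^{S_n}_+\cdot A$ likely requires invoking Haiman's polygraph theorem or the $\C[\by]$-freeness of $J$ established in Theorem~\ref{thm:mainthm}, to control the $\mathrm{sgn}$-isotypic structure of $(\bx,\by)J$. A careful accounting of the sign twist arising from multiplication by the alternating Vandermonde $\Delta$ should then match the conventions for the $S_n$-action on $H_*(\Sp_\gamma)_\Lambda$ used in the conjecture.
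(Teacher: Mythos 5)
Your reduction steps are exactly the paper's: pass from equivariant to ordinary Borel--Moore homology by modding out $\C[\ft]_+=(\by)$, identify $\Lambda$-coinvariants with modding out $(\bx-\mathbf{1})$, commute the two quotients, transport along $\Delta^{-1}J_\bx\cong H_*^T(\Sp_\gamma)$, and use translation invariance of $J$ in $\bx$ to replace $(\bx-\mathbf{1})$ by $(\bx)$, arriving at $J/(\bx,\by)J$. All of this matches the published proof.

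Where you diverge is the last step, and here your proposal has a genuine gap. The paper finishes by simply quoting Haiman's \cite[Proposition 6.1.5]{Hai1}, which identifies $J/(\bx,\by)J$ with $\Gamma(\cO(1))$ on the zero fiber of the Hilbert--Chow map, hence with $DR_n^{\mathrm{sgn}}$. You instead try to re-derive this by producing a surjection $A\twoheadrightarrow J/(\bx,\by)J$ and computing its kernel. But observe that, once the easy containments are in place, the asserted kernel identity $A\cap(\bx,\by)J=\mathfrak m^{S_n}_+A$ is \emph{equivalent} to the statement $J/(\bx,\by)J\cong DR_n^{\mathrm{sgn}}$ that you are trying to prove, not a stepping stone toward it. The Reynolds-operator argument handles the identification of $A/\mathfrak m^{S_n}_+A$ with $DR_n^{\mathrm{sgn}}$, and it gives the containment $\mathfrak m^{S_n}_+A\subseteq A\cap(\bx,\by)J$, but the reverse containment is the full content of Haiman's result; there is no elementary antisymmetrization trick that produces it, and you acknowledge this yourself when you say it ``likely requires invoking Haiman's polygraph theorem.'' So the proposal is structurally correct but does not close the argument; the clean fix is to cite \cite[Proposition 6.1.5]{Hai1} at this point, exactly as the paper does, rather than attempting to reconstruct it. The concluding remark about a ``sign twist from $\Delta$'' is also not needed here: the theorem is an isomorphism of (bi)graded vector spaces, and the $\Delta$-twist was already absorbed in the identification $\Delta H_*^T(\Sp_\gamma)\cong J_\bx$ of Theorem~\ref{thm:mainthm}.
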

\begin{proof}
Using Theorem \ref{thm:mainthm}, we compute that $$H_*(\Sp_\gamma)\cong\frac{H^T_*(\Sp_\gamma)}{\langle \by \rangle}.$$ As the actions of $\C[\bx^\pm]$ and $\C[\by]$ commute, the result is still a $\C[\bx^\pm]$-module. Taking coinvariants, we have 
$$H_*(\Sp_\gamma)_\Lambda:=\frac{H_*(\Sp_\gamma)}{\langle 1-\bx\rangle H_*(\Sp_\gamma)}\cong\frac{H^T_*(\Sp_\gamma)}{\langle 1-\bx, \by\rangle H^T_*(\Sp_\gamma)}.$$ The last equality follows from the isomorphism theorems for modules. Here $\langle 1-\bx\rangle$ means the set $\{1-x_1,\ldots, 1-x_n\}$ and $\by$ means the set $\{y_1,\ldots, y_n\}$.

On the other hand, $$J_{GL_n}/\langle x_1-1,\ldots, x_n-1, y_1,\ldots, y_n\rangle J_{GL_n}$$ may be identified with $J/\langle x_1-1,\ldots, x_n-1, y_1,\ldots, y_n\rangle J,$ where $$J:=\bigcap_{i\neq j} \langle x_i-x_j,y_i-y_j\rangle\subset \C[\bx,\by]$$ since quotient and localization commute. Since $J$ is translation-invariant with respect to $x_i\mapsto x_i+c, i=1,\ldots, n$, so that $$J/\langle x_1-1,\ldots, x_n-1, y_1,\ldots, y_n\rangle J\cong J/\langle \bx, \by\rangle J.$$ On the other hand, we have that
$J/\langle \bx, \by\rangle J\cong DR_n^{sgn}$ by the fact that the left-hand side is the space of sections of $\cO(1)$ on the zero-fiber of the Hilbert-Chow map inside $\Hilb^n(\C^2)$ \cite[Proposition 6.1.5]{Hai1}. 
\end{proof}
\begin{corollary}
We have $$H^*(\Sp_\gamma)^\Lambda\cong \Hom_{\C}(DR_n^{sgn},\C).$$
\end{corollary}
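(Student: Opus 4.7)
The plan is to derive this corollary from Theorem \ref{thm:bezconj} by a standard duality argument. The key observation is the elementary fact that for any $\C[\Lambda]$-module $V$ one has a canonical isomorphism
$$\Hom_\C(V_\Lambda,\C)\;\cong\;\Hom_\C(V,\C)^\Lambda,$$
since a linear functional is $\Lambda$-invariant precisely when it annihilates the subspace spanned by $\lambda v - v$. Thus, once $H^*(\Sp_\gamma)$ is identified with $\Hom_\C(H_*(\Sp_\gamma),\C)$ as a $\C[\Lambda]$-module, the corollary will follow directly from Theorem \ref{thm:bezconj}.

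First I would establish the duality between cohomology and Borel-Moore homology. Write $\Sp_\gamma = \varinjlim_i X_i$ as an ascending union of $\tT$-stable projective subvarieties, e.g. the intersections with the closures of the $G(\cO)$-orbits on $\Gr_G$. By \cite{GKM3}, each $X_i$ is equivariantly formal and admits a paving by affines; in particular $H^*(X_i)$ is finite-dimensional in each degree and paired perfectly with $H_*(X_i)$ via universal coefficients. Using $H^*(\Sp_\gamma) = \varprojlim_i H^*(X_i)$ and $H_*(\Sp_\gamma) = \varinjlim_i H_*(X_i)$, together with the standard duality between filtered colimits and cofiltered limits of vector spaces, one obtains a natural isomorphism
$$H^*(\Sp_\gamma)\;\cong\;\Hom_\C(H_*(\Sp_\gamma),\C).$$

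Next I would check that this isomorphism is $\C[\Lambda]$-equivariant. Although the individual $X_i$ are not preserved by $\Lambda$, the exhaustion can be refined so that for every $\lambda\in\Lambda$ and every $i$ there exists $j$ with $\lambda\cdot X_i\subseteq X_j$; this forces the transition maps between the $H_*(X_i)$ (resp.\ $H^*(X_i)$) to intertwine the $\Lambda$-actions in the colimit (resp.\ limit), and the pairing to be $\Lambda$-equivariant. Applying $(-)^\Lambda$ to the duality and using Theorem \ref{thm:bezconj} then yields
$$H^*(\Sp_\gamma)^\Lambda\;\cong\;\Hom_\C(H_*(\Sp_\gamma),\C)^\Lambda\;\cong\;\Hom_\C(H_*(\Sp_\gamma)_\Lambda,\C)\;\cong\;\Hom_\C(DR_n^{sgn},\C).$$

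The main obstacle is the bookkeeping in the previous step: the natural exhaustion of $\Sp_\gamma$ is not $\Lambda$-stable, so one must be careful to phrase the duality in a way that sees the full lattice action simultaneously. A clean way to bypass this is to work $T$-equivariantly, using the $\C[\ft]$-linear duality $H^T_*(X)\cong\Hom_{\C[\ft]}(H^*_T(X),\C[\ft])$ of Proposition \ref{prop:dualizingmodule} (which is manifestly $\C[\Lambda]$-equivariant since the $\Lambda$-action commutes with the $T$-action), and then specializing $\by=0$ to recover the non-equivariant duality at the end.
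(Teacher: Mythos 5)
Your recommended route in the final paragraph — invoking the $\C[\ft]$-linear duality of Proposition \ref{prop:dualizingmodule}, noting its manifest $\C[\Lambda]$-equivariance since the $\Lambda$- and $T$-actions commute, specializing $\by=0$, and finishing with $\Hom_\C(V,\C)^\Lambda\cong\Hom_\C(V_\Lambda,\C)$ — is precisely what the paper does, written out as a chain of six isomorphisms starting from $(H^*(X))^\Lambda \cong (H^*_T(X)/\C[\ft]_+H^*_T(X))^\Lambda$. Your first, non-equivariant route through the exhaustion $\Sp_\gamma=\varinjlim X_i$ is a genuine alternative, but as you yourself flag, it requires additional bookkeeping to see the $\Lambda$-action through the non-$\Lambda$-stable filtration, and the paper sidesteps this entirely by staying in the $T$-equivariant world where the lattice action and the $\C[\ft]$-duality are both visible at once.
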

\begin{proof}
Let $X$ be an equivariantly formal $T$-ind-scheme with a (commuting) action of $\Lambda$. Then we have 
\begin{align*}
&(H^*(X))^\Lambda &\\
\cong&\left(\frac{H_T^*(X)}{\C[\ft]_+H^*_T(X)}\right)^\Lambda &\\
\cong&\left(\Hom_{\C[\ft]}(H^T_*(X),\C[\ft])\otimes_{\C[\ft]} \C \right)^\Lambda &\\
\cong&\left(\Hom_{\C[\ft]}(H^T_*(X),\C)\right)^\Lambda &\\
\cong&\Hom_{\C}(H_*(X),\C)^\Lambda &\\
\cong& \Hom_{\C[\Lambda]}(H_*(X),\C)&\\
\cong& \Hom_{\C}(H_*(X)_\Lambda,\C). & 
\end{align*}
The second isomorphism follows from the fact that whereas $H_*^T(X)$ is defined as the {\em restricted dual} of $H^*_T(X)$ over $\C[\ft]$, the ordinary dual of $H_*^T(X)$ over $\C[\ft]$ is $H^*_T(X)$.
\end{proof}
\begin{remark}
By the above corollary and conjecture, it seems that it is best to think of $H^*(\tSp_\gamma)^\Lambda$ as the (isomorphic) dual space to $DR_n$, called the {\em diagonal harmonics}, that can be described also as 
$f\in \C[\bx,\by]$ annihilated by all $P\in \C[\partial_{x_1},\ldots,\partial_{x_n},\partial_{y_1},\ldots,\partial_{y_n}]_+^{S_n}$.
\end{remark}
\begin{corollary}
One has 
$$\dim_{q,t} H_*(\Sp_\gamma)_\Lambda=\langle e_n, \nabla e_n\rangle,$$
and $\dim_\C H_*(\Sp_\gamma)_\Lambda=C_n$, where $C_n$ is the $n$th Catalan number.
\end{corollary}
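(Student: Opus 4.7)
The plan is to combine Theorem \ref{thm:bezconj} with classical results on diagonal coinvariants. By that theorem, $H_*(\Sp_\gamma)_\Lambda \cong DR_n^{sgn}$, so up to a grading check the statement reduces to computing the bigraded Hilbert series of $DR_n^{sgn}$ and specializing.

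First I would make the bigrading on $H_*(\Sp_\gamma)_\Lambda$ explicit. Via Theorem \ref{thm:mainthm} we have a bigrading on $J_{GL_n} \subset \C[\bx,\by]$ coming from the cohomological grading (in the $y_i$) and the loop/lattice grading (in the $x_i$). The proof of Theorem \ref{thm:bezconj} passes through the chain
\[
\frac{J_{GL_n}}{\langle 1-\bx,\by\rangle J_{GL_n}} \cong \frac{J}{\langle \bx,\by\rangle J} \cong DR_n^{sgn},
\]
where the first isomorphism uses the translation invariance $x_i \mapsto x_i+1$. Under this translation, the $\bx$-grading on the left matches the usual polynomial grading on the right, so the isomorphism $H_*(\Sp_\gamma)_\Lambda \cong DR_n^{sgn}$ is in fact an isomorphism of bigraded vector spaces with $x_i$ and $y_i$ of the expected bidegrees.

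With the grading accounted for, the computation of $\dim_{q,t} DR_n^{sgn}$ is a classical input. The bigraded Frobenius character of $DR_n$ equals $\nabla e_n$; this is the Garsia--Haiman conjecture proven by Haiman as a consequence of the $n!$ theorem (and is also a corollary of the shuffle theorem of Carlsson--Mellit). Extracting the sign-isotypic component amounts to pairing the Frobenius character with $e_n$, giving
\[
\dim_{q,t} DR_n^{sgn} = \langle e_n, \nabla e_n\rangle,
\]
which is the first claim. For the second claim, I would specialize $q=t=1$: the fact that $\langle e_n, \nabla e_n\rangle|_{q=t=1} = C_n$ is the classical evaluation of the $q,t$-Catalan polynomial, due to Garsia--Haglund and Haglund--Haiman--Loehr (and also follows from the shuffle theorem together with Haglund's combinatorial model).

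The substantive step is not the dimension count itself, since both $\nabla e_n$ and its $q=t=1$ specialization are well established; rather, the subtle point is checking that the bigrading on $H_*(\Sp_\gamma)_\Lambda$ induced from the equivariant/lattice structure is the correct one, i.e.\ matches the standard $(q,t)$-bigrading on $DR_n^{sgn}$ under the isomorphism of Theorem \ref{thm:bezconj}. This boils down to verifying that the translation $x_i \mapsto x_i+1$ used in that proof respects the filtration by powers of the maximal ideal, which is a routine check on associated graded.
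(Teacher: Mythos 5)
Your approach is exactly the one the paper intends; in fact the paper gives no proof at all, treating the corollary as immediate from Theorem \ref{thm:bezconj} together with the facts about $DR_n^{sgn}$ (its bigraded character is $\langle e_n,\nabla e_n\rangle$ and its dimension is $C_n$) already recalled earlier in Section \ref{sec:diagonalcoinvariants}. Your extra paragraph verifying that the translation $x_i\mapsto x_i+1$ in the proof of Theorem \ref{thm:bezconj} is compatible with the bigrading (it preserves the $\by$-degree on the nose and acts unipotently on the $\bx$-filtration, so the induced map on associated graded respects both degrees) is a reasonable sanity check that the paper leaves implicit, but it does not change the route.
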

\begin{remark}
In the spirit of Conjecture \ref{conj:affineflag}, it seems likely that the approach from above can be used to show that $H_*(\tSp_\gamma)_\Lambda \cong DR_n$. Both would follow from an explicit description of $H^0(\cP\otimes \cP,\Hilb^n(\C^2))$.
\end{remark}

\subsection{Rational and elliptic versions}
\label{sec:ratell}
We now comment on the relation of our results to $\Hilb^n(\C^2)$ and $\Hilb^n(\C^*\times \C^*)$. These are known to quantize to the full DAHA and the rational Cherednik algebra of $\mathfrak{gl}_n$. Let us start with the elliptic version. In Theorem \ref{thm:mainthmktheory}, the description of the K-homology of $\Sp_\gamma$ is given. As blow-up commutes with restriction to opens, we have the following  analogue to Theorem \ref{thm: blow-up description hilb} and Corollary \ref{cor:trighilb}.
\begin{corollary}
We have 
\begin{equation}\Hilb^n(\C^*\times \C^*)\cong \Proj \left(\bigoplus_d A_{\bx,\by}^d\right)
\end{equation}
and 
\begin{equation}
Y'_n\cong \Proj \left( \bigoplus_d (J')^d\right).
\end{equation}
Here the subscript $\bx, \by$ denotes localization in $\prod x_i$ and $\prod y_i$, and $Y'_n$ is the isospectral Hilbert scheme on $\C^*\times \C^*$.
\end{corollary}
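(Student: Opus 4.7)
The plan is to mimic the proof of Corollary \ref{cor:trighilb} essentially verbatim, replacing the single localization in the $\bx$-variables by a simultaneous localization in both the $\bx$- and $\by$-variables. The starting point is Theorem \ref{thm: blow-up description hilb}, which exhibits $\Hilb^n(\C^2)$ and $X_n$ as the blow-ups of $\C^{2n}/S_n$ and $\C^{2n}$ along the (scheme-theoretic) big diagonal, via the Proj constructions $\Proj\bigoplus_d A^d$ and $\Proj\bigoplus_d J^d$ respectively.

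The next step is to identify $\Hilb^n(\C^*\times \C^*)$ and $Y'_n$ as open subschemes of $\Hilb^n(\C^2)$ and $X_n$. Since $\C^*\times \C^*\subset \C^2$ is the principal open defined by the non-vanishing of $xy$, the symmetric product $(\C^*\times \C^*)^n/S_n$ is the principal open in $\C^{2n}/S_n$ where the $S_n$-invariant function $(\prod x_i)(\prod y_i)$ is invertible, and analogously on $\C^{2n}$. Exactly as in Definition \ref{def:localized hilb definition} and Definition \ref{def: localized isospectral definition}, $\Hilb^n(\C^*\times \C^*)$ and $Y'_n$ are then the preimages of these opens under the Hilbert–Chow map and its isospectral cover, i.e. the principal opens cut out by the corresponding invertible sections.

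Now I would invoke \cite[Lemma 30.30.3]{StacksProject}, which says that the formation of blow-ups commutes with restriction to open subschemes, to conclude that the restriction of $\Proj\bigoplus_d A^d$ to the relevant open equals $\Proj\bigoplus_d A^d_{\bx,\by}$, and similarly for $X_n$. Here one uses that localization in both $\bx$ and $\by$ is exact and commutes with the formation of symmetric powers, and that $J^{(d)}_{\bx,\by}=(J^d)_{\bx,\by}$ since localization commutes with finite intersections, so $(J')^d$ indeed denotes the common localization $J^d_{\bx,\by}$ appearing on the right-hand side.

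There is no real obstacle here; the only point deserving a moment's care is the compatibility of the subscript notation, namely that forming $\bigoplus_d A^d$ (resp.\ $\bigoplus_d J^d$) and then localizing at $\prod x_i\prod y_i$ yields the graded ring $\bigoplus_d A^d_{\bx,\by}$ (resp.\ $\bigoplus_d (J^d)_{\bx,\by}$), which is the content needed for the Stacks Project lemma to apply directly. Once this bookkeeping is in place, both equations follow immediately.
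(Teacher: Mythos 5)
Your proof is correct and takes essentially the same route as the paper: the paper supplies only the one-sentence justification ``As blow-up commutes with restriction to opens,'' pointing to the same argument already spelled out for Corollary~\ref{cor:trighilb}, which is exactly what you replicate with localization in both $\bx$ and $\by$ rather than just $\bx$. One small wording slip: you say localization ``commutes with the formation of symmetric powers,'' but what is actually needed (and what you do correctly invoke a line later) is that localization commutes with products and finite intersections of ideals, so that $\bigoplus_d(J^d)_{\bx,\by}$ is the localization of the Rees algebra and $(J^{(d)})_{\bx,\by}=J^{(d)}_{\bx,\by}$.
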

 Analogously to Proposition \ref{prop:procesi}, we have the following.
\begin{proposition}
We have 
\begin{equation}
H^0(\cP\otimes \cO(d), \Hilb^n(\C^*\times \C^*))\cong (\Delta')^dK^T(\Sp_{t^dz})
\end{equation}
\end{proposition}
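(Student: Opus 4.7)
The plan is to mirror the proof of Proposition \ref{prop:procesi}, replacing each cohomological/additive input by its $K$-theoretic/multiplicative counterpart. By definition $\cP = \rho_*\cO_{X_n}$ on $\Hilb^n(\C^2)$, and $Y'_n = \rho^{-1}(\Hilb^n(\C^*\times\C^*))$ is the preimage of an open subscheme; flat (open-immersion) base change therefore gives $\cP|_{\Hilb^n(\C^*\times\C^*)} \cong \rho_*\cO_{Y'_n}$, so that
$$H^0(\cP\otimes \cO(d),\Hilb^n(\C^*\times\C^*)) \cong H^0(Y'_n,\cO_{Y'_n}(d)).$$

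Next I would invoke the preceding corollary, which identifies $Y'_n \cong \Proj\bigl(\bigoplus_d (J')^d\bigr)$. The scheme $\Hilb^n(\C^*\times \C^*) \subset \Hilb^n(\C^2)$ is the principal open subset cut out by the $\Sn$-invariant regular function $(\prod_i x_i)(\prod_j y_j)$, and hence so is $Y'_n \subset X_n$. Restriction of $\cO_{X_n}(d)$ to this principal open then corresponds to localization at the same function, giving
$$H^0(Y'_n,\cO_{Y'_n}(d)) \cong (J')^d_{\bx,\by}.$$
Since both $\C[\Lambda] = \C[\bx^\pm]$ and $R(T) = \C[\by^\pm]$ are already localized in the $x_i$ and $y_j$, this bigraded piece sits naturally in $\C[\Lambda]\otimes R(T)$, and the standard localization-commutes-with-intersection argument identifies it with $(J')^{(d)} = \bigcap_\alpha (J'_\alpha)^d$. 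Applying Theorem \ref{thm:mainthmktheory} to rewrite $(J')^{(d)}$ as $(\Delta')^d K^T(\Sp_{t^dz})$ completes the proof.

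The main obstacle I anticipate is justifying $(J')^d = (J')^{(d)}$, the multiplicative analogue of Haiman's identity $J^d = J^{(d)}$, inside the corollary that underlies step two. In the localized setting required here one can avoid invoking a full elliptic polygraph theorem by mimicking the argument used for Theorem \ref{thm:mainthmktheory}: multiplication by $\Delta'$ realizes $(\Delta')^d K^T(\Sp_{t^dz})$ as a torsion-free submodule of $\C[\Lambda]\otimes R(T)$ containing every $(J'_\alpha)^d$, and Lemma 6.14 of \cite{GoHo} forces the equality after inverting $\Delta'$. Verifying that the resulting $R(T)$-algebra structure on the graded pieces of $\bigoplus_d H^0(Y'_n,\cO(d))$ matches the convolution product on $K^T(\Lambda)$ appearing in Theorem \ref{thm:mainthmktheory} is the remaining delicate point one should check.
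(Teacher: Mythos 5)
Your outline follows the same route the paper implicitly takes when it says ``Analogously to Proposition \ref{prop:procesi}'': identify $H^0(\cP\otimes\cO(d),\Hilb^n(\C^*\times\C^*))$ with $H^0(Y'_n,\cO_{Y'_n}(d))$, use the $\Proj$ description of $Y'_n$, observe that restriction to the principal open cut out by $(\prod_i x_i)(\prod_j y_j)$ is localization, and then invoke Theorem \ref{thm:mainthmktheory}. That chain of reductions is correct.

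One small correction, though, on the point you flag as the ``main obstacle.'' You do not need any multiplicative (or ``elliptic'') version of Haiman's polygraph theorem, nor does the argument you sketch in the last paragraph actually deliver $(J')^d = (J')^{(d)}$: what multiplication by $\Delta'$ together with \cite[Lemma 6.14]{GoHo} gives is precisely Theorem \ref{thm:mainthmktheory}, i.e.\ $(\Delta')^d K^T(\Sp_{t^dz}) = (J')^{(d)}$, which is the \emph{other} side of the identification and does not touch $(J')^d$. The equality $(J')^d = (J')^{(d)}$ is an immediate consequence of Haiman's original identity $J^d = J^{(d)}$ in $\C[\bx,\by]$: localizing at $\prod x_i \prod y_j$ commutes both with taking powers of an ideal and with finite intersections, so $(J')^d = (J^d)_{\bx,\by} = (J^{(d)})_{\bx,\by} = (J')^{(d)}$. (This is exactly the mechanism the paper uses in the trigonometric case, where the proof of Corollary \ref{cor:trighilb} notes $(J^{(d)})_\bx = J^{(d)}_\bx$ and the proof of Proposition \ref{prop:procesi} then uses $J^d = J^{(d)}$ silently.) With that substitution your proof is complete, and the compatibility of the graded product with the convolution product on $K^T(\Lambda)$ is likewise inherited from the trigonometric case by localization, so it does not need a separate verification.
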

Let now $\Gr+_{GL_n}:=\bigsqcup_{\lambda \in \Lambda^+} \Gr^\lambda$ be the positive part of the affine Grassmannian. Let $\Sp_{t^d z}\cap \Gr^+_{GL_n}$ Then the $T$-fixed points in both are identified with $\Lambda^+$ and their classes in $\C[\Lambda]$ with the monomials without negative powers. Intersecting $\Delta^d H_*^T(\Sp_{t^dz})$ with $H_*^T(\Lambda^+)$ gives $J^{(d)}\subset \C[\bx,\by]$. From the proof of Theorem \ref{thm:mainthm}, it is not hard to see that this agrees with $\Delta^d H_*^T(\Sp_{t^d z}^+)$. In particular,  we have
\begin{theorem}
\label{thm:rationalprocesi}
$$H^0(\cP\otimes \cO(d), \Hilb^n(\C^2))\cong \Delta^dH_*^T(\Sp_{t^d z}^+).$$
\end{theorem}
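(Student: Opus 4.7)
The plan is to identify both sides with the ideal $J^{(d)} \subset \C[\bx,\by]$ and then chain the identifications. For the left-hand side, combining $\cP = \rho_* \cO_{X_n}$ with the Proj presentation of Theorem \ref{thm: blow-up description hilb} and the projection formula gives
$$H^0(\Hilb^n(\C^2), \cP \otimes \cO(d)) \;=\; H^0(X_n, \cO_{X_n}(d)) \;=\; J^d,$$
and Haiman's polygraph identity \eqref{eq:anti-invts equal diagonals} then yields $J^d = J^{(d)}$.

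For the right-hand side, I would work through the GKM description. The positive part $\Gr_{GL_n}^+ = \bigsqcup_{\lambda \in \Lambda^+} \Gr^\lambda$ is a closed ind-subscheme of $\Gr_{GL_n}$ (cut out by the condition that the lattice is contained in $\cO^n$), so $\Sp_{t^dz}^+ = \Sp_{t^dz} \cap \Gr_{GL_n}^+$ is a $T$-stable closed sub-ind-scheme of $\Sp_{t^dz}$ with $T$-fixed point set exactly $\Lambda^+$. Under the identification of $H_*^T(\Lambda) = \C[\Lambda] \otimes \C[\ft]$ with $\C[\bx^\pm, \by]$, the subspace $H_*^T(\Lambda^+)$ corresponds to $\C[\bx, \by]$. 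Via Corollary \ref{cor:localization} applied to the induced GKM subgraph on $\Lambda^+$, the pushforward $H_*^T(\Sp_{t^dz}^+) \hookrightarrow H_*^T(\Sp_{t^dz})$ identifies its source with the classes in $H_*^T(\Sp_{t^dz})$ whose GKM tuples vanish outside $\Lambda^+$. Since multiplication by the $\C[\ft]$-scalar $\Delta^d$ preserves support, Theorem \ref{thm:mainthm} then gives
$$\Delta^d H_*^T(\Sp_{t^dz}^+) \;=\; \Delta^d H_*^T(\Sp_{t^dz}) \cap H_*^T(\Lambda^+) \;=\; J^{(d)}_\bx \cap \C[\bx,\by].$$

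The remaining task is the saturation identity $J^{(d)}_\bx \cap \C[\bx, \by] = J^{(d)}$. Since localization commutes with finite intersection, this reduces to showing that $\prod_k x_k$ is a nonzerodivisor modulo $J^{(d)} = \bigcap_{i \neq j} \langle x_i - x_j, y_i - y_j \rangle^d$. Each associated prime of the intersection is contained in some pairwise diagonal prime $\langle x_i - x_j, y_i - y_j \rangle$, and no coordinate $x_k$ lies in such a prime, as the quotient $\C[\bx,\by]/\langle x_i - x_j, y_i - y_j\rangle$ is a polynomial ring in $2(n-1)$ variables on which $x_k$ maps to a nonzero coordinate function. Combining the three identifications proves the theorem.

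The step I expect to be the main obstacle is the middle one: justifying that the image of $H_*^T(\Sp_{t^dz}^+)$ in $H_*^T(\Sp_{t^dz})$ is precisely the subspace of classes supported on $\Lambda^+$. In the GKM picture this is essentially formal (the residue conditions for the induced subgraph are implied by those of the full graph once terms indexed by $\Lambda \setminus \Lambda^+$ are zeroed out), but one has to be careful about compatibility of the finite-dimensional Schubert-type exhaustions of $\Sp_{t^dz}^+$ and $\Sp_{t^dz}$ used in the colimit definition of equivariant Borel-Moore homology, and to verify that $\Sp_{t^dz}^+$ is indeed closed (not merely locally closed) as an ind-subscheme. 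Once these points are in hand, the rest of the argument is a direct unwinding of previously established results.
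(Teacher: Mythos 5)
Your proposal follows the same route as the paper's (very terse) justification: identify the left side with $J^{(d)}$ via the Procesi bundle and Haiman, identify the right side with $J^{(d)}_\bx \cap \C[\bx,\by]$ via the GKM localization picture for the closed sub-ind-scheme $\Sp^+_{t^dz}\subset \Sp_{t^dz}$, and then observe these coincide. Where the paper merely asserts that intersecting with $H^T_*(\Lambda^+)$ gives $J^{(d)}$, you have supplied the missing commutative-algebra justification, and it is correct: since each $\langle x_i-x_j, y_i-y_j\rangle$ is a complete intersection, $\langle x_i-x_j, y_i-y_j\rangle^d$ is primary, so the associated primes of $J^{(d)}=\bigcap\langle x_i-x_j, y_i-y_j\rangle^d$ lie among the diagonal primes, none of which contains any $x_k$; hence $\prod_k x_k$ is a nonzerodivisor mod $J^{(d)}$ and $J^{(d)}_\bx\cap\C[\bx,\by]=J^{(d)}$.

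On the GKM step you flag as the main obstacle: your worry is legitimate but resolvable in this setting. The potential problem when passing to a closed $T$-stable ind-subscheme is that a connected component $Y$ of $\Sp_{t^dz}^{\ker\chi}$ could meet $\Sp^+_{t^dz}$ in several pieces, in which case the residue condition on $Y$ (which only constrains the sum over $Y^T$) would be strictly weaker than the conditions on those pieces. Here that does not happen: for the singular character attached to $\alpha=e_i-e_j$, the components of $\Sp_{t^dz}^{\ker\chi}$ have fixed-point sets given by cosets $\lambda+\Z\alpha$, and the intersection of such a coset with $\Lambda^+=\N^n$ is the contiguous set $\{\lambda+k\alpha : -\lambda_i\le k\le \lambda_j\}$, which is connected in the induced GKM graph. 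Hence the residue conditions for $\Sp^+_{t^dz}$ are exactly those for $\Sp_{t^dz}$ restricted to tuples vanishing off $\Lambda^+$, and the pushforward image is as you claim. With that in hand, the rest of your argument is a correct unwinding of Theorem \ref{thm:mainthm}, Theorem \ref{thm: blow-up description hilb}, and Haiman's $J^d=J^{(d)}$.
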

\begin{remark}
When $n=2$, it is not hard to see that $\Sp_{tz}^+$ is isomorphic to the Hilbert scheme of points on the curve singularity $\{x^2=y^2\}$, as studied in Section \ref{sec:cptfdjacobians}. In forthcoming work, it will be shown that this is the case for higher $n$ as well.
\end{remark}
\subsection{Other root data}
\label{sec:isospectral-othertypes}
In this section, we consider a general connected reductive group $G$. As we will see, many things from the above discussion are not as straightforward. 

In \cite{Hai1}, Haiman discusses the extension of his $n!$ and $(n+1)^{n-1}$ conjectures to other groups. 
The naturally appearing space here is $T^*\ft$ with its diagonal $W$-
action. 
In the case of a general reductive group, Gordon \cite{Gor} has proved that there is a 
canonically defined doubly graded quotient ring $R^W$ of the coinvariant 
ring $$\C[T^*\ft]/\C[T^*\ft]_+^W$$ whose dimension is $(h+1)^r$ for the 
Coxeter number $h$ and rank $r$. It is also known that $sgn\otimes R^W$ 
affords the permutation representation of $W$ on $Q/(h+1)Q$ for $Q$ the 
root lattice of $G$. It would be interesting to compare the lattice-invariant parts of $H^*(\Sp_\gamma)$ and $H^*(\tSp_\gamma)$ to this quotient in other Cartan-Killing types.

We have now seen how the antisymmetric pieces of spaces of diagonal coinvariants appear from affine Springer fibers in the affine Grassmannian. On the other hand, we have seen that in type $A$, the antisymmetric part of $\C[\bx, \by]$ plays the main role in the construction of the isospectral Hilbert scheme $X_n$ as a blow-up.
From solely the point of view of Weyl group representations, it would be then natural to consider 
the $sgn$-isotypic part of $\C[T^*\ft], \C[T^*T^\vee]$. 

We now restate and prove Theorem \ref{thm:introanti}. 
\begin{theorem}
\label{thm:anti}
Let $I_G\subseteq \C[T^*T^\vee]$ be the ideal generated by $W$-alternating polynomials in $\C[T^*T^\vee]$ with respect to the diagonal action. Then there is an injective map
$$I^d\hookrightarrow J_G^{(d)}=\Delta^dH_*^T(\Sp_\gamma).$$
\end{theorem}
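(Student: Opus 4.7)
The plan is to establish the inclusion of ideals $I_G^d \subseteq J_G^{(d)}$ inside $\C[T^*T^\vee]$ directly, and then compose with the isomorphism $J_G^{(d)} \cong \Delta^d H_*^T(\Sp_\gamma)$ provided by Theorem \ref{thm:intromainthm}. Because $I_G^d$ is generated as an ideal by $d$-fold products $f_1 \cdots f_d$ of $W$-alternating functions, it suffices to prove the base case $d = 1$: every $W$-alternating function lies in each $J_\alpha$, hence in $\bigcap_{\alpha \in \Phi^+} J_\alpha = J_G^{(1)}$. Once that is in place, any $d$-fold product of alternating functions automatically sits in $J_\alpha^d$ for every $\alpha$, and therefore in the intersection $J_G^{(d)}$.

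For a fixed positive root $\alpha$, I would write $R := \C[T^*T^\vee] = A \otimes_\C B$ with $A = \C[T^\vee]$ and $B = \C[\ft]$. Since the diagonal $s_\alpha$-action respects the tensor product, the $s_\alpha$-anti-invariants decompose as
\[
R^{-s_\alpha} = (A^{-s_\alpha} \otimes B^{s_\alpha}) \oplus (A^{s_\alpha} \otimes B^{-s_\alpha}).
\]
The second summand is contained in $\langle y_\alpha\rangle \subset J_\alpha$ by the classical identity $B^{-s_\alpha} = y_\alpha \cdot B^{s_\alpha}$ for a reflection acting on the vector space $\ft$. For the first summand, I would show $A^{-s_\alpha} \subseteq \langle 1 - x^{\alpha^\vee}\rangle$. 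The key observation is that the subtorus $Z_\alpha := \{t \in T^\vee : x^{\alpha^\vee}(t) = 1\}$ is pointwise fixed by $s_\alpha$: for any $\mu \in \Lambda$,
\[
x^\mu(s_\alpha t) = x^{s_\alpha \mu}(t) = x^\mu(t) \cdot x^{\alpha^\vee}(t)^{-\langle \alpha, \mu\rangle},
\]
which equals $x^\mu(t)$ whenever $x^{\alpha^\vee}(t) = 1$. Any $s_\alpha$-anti-invariant function must therefore vanish on $Z_\alpha$, and since $1 - x^{\alpha^\vee}$ is square-free on the smooth variety $T^\vee$ (it factors into distinct linear terms in any primitive coordinate on the one-parameter subgroup generated by $\alpha^\vee$), the principal ideal $\langle 1 - x^{\alpha^\vee}\rangle$ is radical, yielding the desired containment.

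The main subtlety is on the torus factor, where one cannot simply transplant the reflection-on-a-vector-space result: for non-simply-connected $G$ the full fixed locus $(T^\vee)^{s_\alpha}$ can be disconnected, cut out by $x^{d'\alpha^\vee} = 1$ for some $d' > 1$. The resolution is that we only need $Z_\alpha$ to be contained in the fixed locus, not to coincide with it, and the character-theoretic identity above provides this containment in complete generality. Combining the two summands, $R^{-s_\alpha} \subseteq J_\alpha$; intersecting over the reflections gives $\C[T^*T^\vee]^{sgn} \subseteq J_G^{(1)}$; multiplying $d$ such elements pushes the product into $J_\alpha^d$ for every $\alpha$, hence into $J_G^{(d)}$; and composing the resulting inclusion $I_G^d \hookrightarrow J_G^{(d)}$ of ideals with Theorem \ref{thm:intromainthm} yields the injection $I_G^d \hookrightarrow \Delta^d H_*^T(\Sp_\gamma)$.
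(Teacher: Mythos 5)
Your proof is correct and follows the same overall strategy as the paper's: reduce to showing a single $W$-alternating function lies in $J_\alpha$ for every positive root $\alpha$, then observe that a $d$-fold product of such functions lands in $J_\alpha^d$ for all $\alpha$, hence in $J_G^{(d)}=\bigcap_\alpha J_\alpha^d$, and compose with Theorem~\ref{thm:intromainthm}. Where you differ is in how the base case is carried out. The paper argues in one stroke on $T^*T^\vee$: an alternating $f$ must vanish on the codimension-two arrangement $V(J_G)$ (since each of its points is fixed by some $s_\alpha$), and the Nullstellensatz then gives $f\in J_G$ — a step that tacitly relies on $J_G$ being a radical ideal and on the unproved claim that the locus $\{x^{\alpha^\vee}=1,\,y_\alpha=0\}$ is pointwise $s_\alpha$-fixed. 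You split the $s_\alpha$-anti-invariants along the tensor decomposition $\C[T^*T^\vee]=\C[T^\vee]\otimes\C[\ft]$, reducing to two more elementary facts: the classical factorization $B^{-s_\alpha}=y_\alpha B^{s_\alpha}$ on the vector-space factor, and a principal-ideal Nullstellensatz on $T^\vee$ justified by square-freeness of $1-x^{\alpha^\vee}$. You also make the fixed-point verification explicit via the character computation $x^\mu(s_\alpha t)=x^\mu(t)\,x^{\alpha^\vee}(t)^{-\langle\alpha,\mu\rangle}$, correctly noting that only $Z_\alpha\subseteq (T^\vee)^{s_\alpha}$ is needed, not equality — a point that matters for non-simply-connected $G$. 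The trade-off: your version is longer but more self-contained and avoids asserting radicality of the full intersection ideal $J_G$; the paper's is shorter but leaves these small verifications to the reader.
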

\begin{proof}
Write $(\bx,\by)=(x_1,\ldots,x_r,y_1,\ldots, y_r)$ for the coordinates on $T^*T^\vee$ determined by $x_i=\exp(\epsilon_i)$ and where the $y_i$ are the 
cotangent directions. Let $f(\bx,\by)\in I_G$ and let
$\alpha \in \Phi^+$ be a positive root. Denote by $s_\alpha$ the corresponding reflection. Without loss of 
generality we may take $f(\bx,\by)$ to be $W$-antisymmetric. Then at 
points $(\bx,\by)$ where $\exp(\alpha^\vee)=1$, $\partial_\alpha=0$ we 
must have $s_\alpha\cdot f(\bx,\by)=-f(\bx,\by)=f(\bx,\by)$ for any $s_\alpha$. Thus $f(\bx,\by)=0$ on the subspace arrangement defined by $J_G$, and by the Nullstellensatz $f\in J_G$. Taking $d$th powers and observing that $J^d_G
\subseteq J^{(d)}_G$ for any $d$ gives the result.
\end{proof}

\begin{proposition}
There is a natural graded algebra structure on $$\bigoplus_{d\geq 0} J^{(d)}_G$$
given by multiplication of polynomials:
$$J^{(d_1)}_G\times J^{(d_2)}_G\to J^{(d_1+d_2)}_G.$$
\end{proposition}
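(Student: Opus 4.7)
The plan is to unravel the definition $J^{(d)}_G = \bigcap_{\alpha\in\Phi^+} J^d_\alpha$ and reduce the claim to the elementary fact that for ideals in a commutative ring, $I^{d_1}\cdot I^{d_2}\subseteq I^{d_1+d_2}$. Since the ambient ring $\C[\Lambda]\otimes\C[\ft]$ is a commutative $\C$-algebra, the only thing to verify is that ordinary polynomial multiplication sends $J^{(d_1)}_G\otimes J^{(d_2)}_G$ into $J^{(d_1+d_2)}_G$; associativity, commutativity and the existence of a unit $1\in \C[\Lambda]\otimes\C[\ft]=J^{(0)}_G$ are then automatic, and the grading by $d$ is preserved by construction.

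For the nontrivial step, fix $\alpha\in\Phi^+$. The definition gives the containments
\[
J^{(d_1)}_G\subseteq J^{d_1}_\alpha,\qquad J^{(d_2)}_G\subseteq J^{d_2}_\alpha,
\]
and since $J_\alpha=\langle y_\alpha,\,1-x^{\alpha^\vee}\rangle$ is an ordinary ideal of $\C[\Lambda]\otimes\C[\ft]$, multiplication of elements from $J^{d_1}_\alpha$ and $J^{d_2}_\alpha$ lands in the product ideal $J^{d_1}_\alpha\cdot J^{d_2}_\alpha = J^{d_1+d_2}_\alpha$. Therefore the image of the multiplication map lies in $J^{d_1+d_2}_\alpha$ for every single $\alpha\in\Phi^+$, and intersecting over all positive roots yields
\[
J^{(d_1)}_G\cdot J^{(d_2)}_G\subseteq \bigcap_{\alpha\in\Phi^+} J^{d_1+d_2}_\alpha = J^{(d_1+d_2)}_G.
\]

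I do not anticipate a genuine obstacle here: the statement is essentially a formal property of powers of ideals in a commutative ring, combined with the fact that intersection distributes over the inclusion of product ideals into ideal powers. The one conceptual point worth flagging, as noted already in the remark after Theorem \ref{thm:mainthm}, is that this polynomial multiplication on $\bigoplus_d \Delta^d H_*^T(\Sp_{zt^d})$ corresponds geometrically to convolution on the affine Grassmannian of $T$; so while the algebra structure is transparent on the ideal side, its geometric incarnation on the affine Springer fiber side is a nontrivial feature, but this does not enter the proof of the proposition itself.
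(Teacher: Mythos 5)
Your proof is correct and follows essentially the same route as the paper: fix a positive root $\alpha$, observe that multiplication sends $J_\alpha^{d_1}\times J_\alpha^{d_2}$ into $J_\alpha^{d_1+d_2}$, and then intersect over $\alpha\in\Phi^+$. The extra remarks about unit, associativity, and the geometric convolution interpretation are accurate but not needed beyond what the paper already records.
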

\begin{proof}
Suppose $f_i\in \bigcap_{\alpha\in \Phi^+}\langle 1-\alpha^\vee, y_\alpha\rangle^{^i}$, $i=1,2$. Then $f_1f_2\in \langle 1-\alpha^\vee, y_\alpha\rangle^{d_1+d_2}$ for all $\alpha$, so that $J^{(d_1)}_G J^{(d_2)}_G\subseteq J^{(d_1+d_2)}_G$.
\end{proof}
The following Theorem was communicated to the author by Mark Haiman.
\begin{theorem}
$$Y_G:=\Proj\left(\bigoplus_{d\geq 0} J^{(d)}_G\right)$$
is a normal variety.
\end{theorem}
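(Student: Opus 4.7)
The plan is to show that the graded ring $R := \bigoplus_{d \geq 0} J_G^{(d)} t^d \subset \C[T^*T^\vee][t]$ is a normal domain; the standard principle that $\Proj$ of a $\Z_{\geq 0}$-graded normal domain is a normal scheme then gives the result. The starting observation is that
$$R = \bigcap_{\alpha \in \Phi^+} R_\alpha \subset \C[T^*T^\vee][t],$$
where $R_\alpha := \bigoplus_d J_\alpha^d t^d$ is the ordinary Rees algebra of $J_\alpha = \langle 1 - x^{\alpha^\vee},\, y_\alpha\rangle$; this is immediate from the defining identity $J_G^{(d)} = \bigcap_\alpha J_\alpha^d$.

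The main step is to verify that each $R_\alpha$ is a normal domain. Since $f_\alpha := 1 - x^{\alpha^\vee}$ lies in the first tensor factor of $\C[T^*T^\vee] = \C[\Lambda] \otimes \C[\ft]$ and $g_\alpha := y_\alpha$ in the second, they are coprime, $V(f_\alpha) \subset T^\vee$ is a smooth (possibly disconnected) divisor, and $V(g_\alpha) \subset \ft$ is a hyperplane; consequently $V(J_\alpha)$ is smooth of codimension two and $(f_\alpha, g_\alpha)$ is a regular sequence in the regular ring $\C[T^*T^\vee]$. The standard presentation for the Rees algebra of a length-two regular sequence then yields
$$R_\alpha \cong \C[T^*T^\vee][u,v]/(f_\alpha v - g_\alpha u),$$
a hypersurface in a regular ambient ring with irreducible defining equation (using $\gcd(f_\alpha, g_\alpha) = 1$), hence a Cohen-Macaulay domain. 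A direct Jacobian computation — exploiting that $df_\alpha$ and $dg_\alpha$ lie in orthogonal cotangent summands and are nowhere vanishing on $V(J_\alpha)$ — shows the singular locus of $\Spec R_\alpha$ is contained in the zero section $\{u = v = 0\}$ above $V(J_\alpha)$, of codimension three; Serre's criterion R1 + S2 then gives normality.

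Once each $R_\alpha$ is normal, normality of $R = \bigcap_\alpha R_\alpha$ follows from the elementary fact that an intersection of normal subdomains of a common integral domain is normal: any $x \in \text{Frac}(R)$ integral over $R$ is integral over each $R_\alpha$ and lies in $\text{Frac}(R_\alpha) \supset \text{Frac}(R)$, hence lies in $R_\alpha$ by normality, and therefore in $\bigcap_\alpha R_\alpha = R$. The main obstacle in this approach is the second paragraph: verifying normality of $R_\alpha$ requires both the smoothness of $V(J_\alpha)$ (which is easy once the torus and Lie-algebra factors are separated) and the explicit codimension bound on the singular locus of the Rees algebra. A minor further subtlety, not needed for normality but relevant to the word ``variety,'' is the finite generation of $R$ as a $\C$-algebra; symbolic Rees algebras are not Noetherian in general, so this likely requires additional input (such as the identification with the isospectral Hilbert scheme in type $A$).
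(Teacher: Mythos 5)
Your proof is correct. It rests on the same underlying facts as the paper's argument — that each $J_\alpha = \langle 1 - x^{\alpha^\vee}, y_\alpha \rangle$ is generated by a regular sequence cutting out a smooth codimension-two locus, and that the symbolic Rees ring is obtained by intersecting over $\alpha \in \Phi^+$ — but it packages them differently, at the level of Rees algebras rather than of ideals. The paper's proof is a one-paragraph argument: it quotes the fact that powers of an ideal generated by a regular sequence are integrally closed, notes that intersections of integrally closed ideals are integrally closed, concludes that each $J_G^{(d)}$ is integrally closed, and then asserts that the graded ring $\bigoplus_d J_G^{(d)}$ is therefore normal. Your route instead writes $R = \bigcap_\alpha R_\alpha$ with $R_\alpha$ the ordinary Rees algebra of $J_\alpha$, proves directly that each $R_\alpha$ is normal via the hypersurface presentation and Serre's criterion, and intersects normal subrings of a common fraction field. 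This buys two things. First, it is self-contained: you reprove (rather than cite) the integral closure of powers of $J_\alpha$, in the form of normality of $R_\alpha$. Second, it cleanly avoids a genuine subtlety that the paper glosses over: passing from ``each $J_G^{(d)}$ is integrally closed as an ideal'' to ``$\bigoplus_d J_G^{(d)}$ is normal'' requires integral closure \emph{with respect to the filtration} $\{J_G^{(di)}\}_i$, which is a priori stronger than ideal-theoretic integral closure of $J_G^{(d)}$ (since $(J_G^{(d)})^i \subseteq J_G^{(di)}$ can be strict); the correct deduction actually routes through the $J_\alpha^d$ being normal ideals, which is exactly what your ring-level intersection argument does transparently. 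Your final caveat about finite generation is also well taken: the paper's argument establishes that $\Proj R$ is a normal scheme, but without Noetherianity of the symbolic Rees algebra (known in type $A$ from Haiman, not obviously in general) the word ``variety'' is not fully justified; the paper does not address this point.
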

\begin{proof}
The  powers  of  an  ideal  generated  by  a  regular sequence are integrally closed, as is an intersection of integrally closed ideals. Therefore, each of the ideals $J^{(d)}_G$ is integrally closed, and so is the algebra $$\bigoplus_{d\geq 0} J^{(d)}_G.$$
By construction, the ring is an integral domain, so $Y_G$ is by definition normal. See also \cite[Proposition 3.8.4]{Hai1} for the proof of this statement in type A.
\end{proof}
\begin{remark}
This $\Proj$-construction is sometimes called the symbolic blow-up. Since we do not know if $J^d_G=J^{(d)}_G$, and likely this is not the 
case, the ring $\bigoplus_{d\geq 0} J^{(d)}_G$ is not generated in 
degree one. However, if we did have translation invariance in the $
\Lambda$-direction in this case, we could deduce results about the 
geometry of the double Coxeter arrangement in $T^*\ft^\vee$ by similar arguments as in type A. It would 
be reasonable to suspect $Y_G$
also has a map to the ``$W$-Hilbert scheme" or some crepant resolution but we do not discuss these possibilities any further.
It should be mentioned that in \cite{Gin}, Ginzburg studies the ``isospectral commuting variety". He has proved that its normalization  is Cohen-Macaulay and Gorenstein. It would be interesting to know how this variety relates to the variety $Y_G$.
\end{remark}

\section{Relation to knot homology}
\label{sec:braids}
Gorsky and Hogancamp have recently defined $y$-ified Khovanov-Rozansky 
homology $\HY(-)$ \cite{GoHo}. It is a deformation of the triply-graded knot homology theory of Khovanov and Rozansky \cite{KR2}, which is often dubbed HOMFLY homology, for it categorifies the HOMFLY polynomial. In this section, we discuss the relationship of the results in previous sections to these link homology theories.

Recall that the HOMFLY homology of a braid closure $\bar{\beta}$ can be defined \cite{KR2} as the Hochschild homology of a certain complex of Soergel bimodules called the Rouquier complex. We denote the triply graded homology of 
$\bar{\beta}$ by $\HHH(\bar{\beta})$.

As stated above, there exists a nontrivial deformation of this theory, called $y$-ification, which takes place in an enlarged category of curved complexes of $y$-ified ``Soergel bimodules". It was defined in \cite{GoHo} and in practice is still defined as the Hocschild homology of a deformed Rouquier complex. We denote the $y$-ified homology groups of a braid closure $L=\bar{\beta}\subset S^3$ by $\HY(L)$. They are triply graded modules over a superpolynomial ring $\C[x_1,\ldots, x_m,y_1,\ldots, y_m,\theta_1,\ldots, \theta_m]$, where $m$ is the number of components in $L$. The $\theta$-grading comes from Hochschild homology, and we will mainly be interested in the Hochschild degree zero part. We will denote this by $\HY(L)^{a=0}$. See \cite[Definition 3.4]{GoHo} for the precise definitions.

\begin{definition}
Let $\cox_n\in \Br_n$ be the positive lift of the Coxeter element of $\Sn$. The $d$th power of the {\em full twist} is the braid $\FT_n^d:=\cox_n^{nd}$.
\end{definition}
\begin{remark}
The element $\FT_n$ is a central element in the braid group and it is known to generate the center.
\end{remark}

\begin{theorem}[\cite{GoHo}]
We have $\HY(\FT_n^d)^{a=0}\cong J^d\subset \C[\bx,\by]$.
\end{theorem}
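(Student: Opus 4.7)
The plan is to follow the Gorsky--Hogancamp strategy, which reduces the problem to a parity statement for the ordinary triply graded homology $\HHH(\overline{\FT_n^d})$, which by classical results is the HOMFLY homology of the $(n,nd)$-torus link. Concretely, I would decompose the argument into three largely independent pieces: (i) a general ``parity implies ideal'' statement relating $\HY^{a=0}$ to a prescribed intersection of link ideals in $\C[\bx,\by]$, (ii) the parity statement itself for powers of the full twist, which is the non-trivial input, and (iii) an elementary linking-number computation which identifies the resulting ideal with $J^d$.

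For step (i), I would recall the construction of $\HY(\beta)$ as Hochschild homology of a curved $y$-deformed Rouquier complex built from Soergel bimodules, and its degeneration properties. The key structural observation, due to Gorsky--Hogancamp, is that if $\HHH(\bar{\beta})$ is concentrated in even homological degrees, then all the differentials in the $y$-ified complex are forced to vanish on the $a=0$ part. Combined with the general presentation of the Hochschild degree zero part as a sum of ``trace ideals'' in $\C[\bx,\by]$ coming from each component of the link, this identifies
\[
\HY^{a=0}(\bar{\beta})\;\cong\;\bigcap_{i<j}\langle x_i-x_j,\,y_i-y_j\rangle^{d_{ij}},
\]
where $d_{ij}$ denotes the pairwise linking number between strands $i$ and $j$ in $\bar{\beta}$. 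This is the form of \cite[Proposition 5.5]{GoHo} that is quoted earlier in the excerpt.

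For step (ii), which I expect to be the main obstacle, I would invoke the Elias--Hogancamp explicit computation of $\HHH$ of torus links $T(n,nd)$. Their recursion, running over ``categorified projectors'' for Coxeter braids, gives a closed form for the triply graded homology of $\FT_n^d$ and, crucially, verifies that it is supported in even homological degrees. This parity statement is genuinely deep; it ultimately rests on the positivity of certain Soergel-bimodule complexes and the existence of a recursive semi-orthogonal decomposition whose building blocks are parity by inspection. I would not attempt to reprove this, but rather quote it as the input that makes step (i) applicable to $\beta=\FT_n^d$.

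For step (iii), a direct inspection of the braid $\FT_n^d=\cox_n^{nd}$ shows that it is a pure braid and that each pair of strands winds around each other exactly $d$ times, so $d_{ij}=d$ for every $1\le i<j\le n$. Substituting into the formula from step (i) gives
\[
\HY^{a=0}(\overline{\FT_n^d})\;\cong\;\bigcap_{i<j}\langle x_i-x_j,\,y_i-y_j\rangle^{d}\;=\;J^{(d)},
\]
which by the identity $J^d=J^{(d)}$ from Haiman's Polygraph Theorem equals $J^d$, finishing the argument. Thus the proof is a synthesis of a formal homological-algebra reduction, a hard combinatorial/categorical input (parity of torus link HOMFLY homology), and a trivial linking-number count.
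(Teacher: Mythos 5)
The theorem in question is stated in the paper without a self-contained proof---it is quoted directly from Gorsky--Hogancamp \cite{GoHo}, and the remark preceding Corollary \ref{coro:ft} already sketches exactly the mechanism you describe (parity plus Proposition 5.5 of \cite{GoHo} plus a linking-number count). Your proposal is a correct reconstruction of that same argument: the reduction via \cite[Proposition 5.5]{GoHo} to a parity statement, the invocation of Elias--Hogancamp \cite{EH} for parity of $\HHH(T(n,nd))$, the observation that $\FT_n^d$ is a pure braid with all pairwise linking numbers equal to $d$, and finally Haiman's $J^d=J^{(d)}$ to convert the symbolic power intersection into an ordinary power. One small point of precision: step (i) is not quite as automatic as ``the differentials vanish''---the actual mechanism in \cite{GoHo} involves freeness of $\HY(L)$ over $\C[\by]$ (which parity gives) feeding into injectivity of link-splitting maps and then an identification after inverting $y_i-y_j$, so the intersection formula takes a bit more bookkeeping than the phrase ``trace ideals'' suggests---but the overall structure is the intended one, and this is the argument the paper is citing.
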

\begin{corollary}
\label{coro:ft}
There is an isomorphism of $\C[\bx^\pm, \by]$-modules 
$$\Delta^dH_*^T(\Sp_\gamma)\cong \HY(\FT_n^d)^{a=0}\otimes_{\C[\bx]}\C[\bx^\pm]$$ for $\gamma=zt^d$.
\end{corollary}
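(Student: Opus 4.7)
The plan is to chain together the three identifications already available in the paper and verify that the ideals match after localization in the $\bx$-variables. First, I would apply Theorem \ref{thm:mainthm} for $G=GL_n$ to get the identification
$$\Delta^d H_*^T(\Sp_\gamma) \cong J_{GL_n}^{(d)} = \bigcap_{\alpha\in \Phi^+} J_\alpha^d \subset \C[\Lambda]\otimes \C[\ft].$$
Second, I would unpack what $J_\alpha$ means in type $A$: for $G=GL_n$ the positive roots are $\alpha_{ij}=\epsilon_i-\epsilon_j$ with $i<j$, and under the identification $\C[\Lambda]\otimes \C[\ft]\cong \C[\bx^\pm,\by]$, the generator $1-x^{\alpha_{ij}^\vee}$ becomes $1-x_i/x_j$. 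Since $x_j$ is a unit in $\C[\bx^\pm,\by]$, we have $\langle 1-x_i/x_j, y_i-y_j\rangle = \langle x_i-x_j, y_i-y_j\rangle$ in the localized ring. Consequently $J_{GL_n}^{(d)}$ is identified with the localization $J^{(d)}_\bx$ of Haiman's ideal $J^{(d)}=\bigcap_{i<j}\langle x_i-x_j,y_i-y_j\rangle^d \subset \C[\bx,\by]$.

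Third, I would invoke Haiman's polygraph theorem, recalled earlier in the paper, which gives $J^{(d)}=J^d$ in $\C[\bx,\by]$. Because localization commutes with finite intersections and with taking powers of finitely generated ideals, passing to $\C[\bx^\pm,\by]$ preserves this equality, so $J^{(d)}_\bx = J^d_\bx = J^d\otimes_{\C[\bx]}\C[\bx^\pm]$. Finally, I would apply the Gorsky--Hogancamp theorem stated immediately above the corollary, which gives an isomorphism of $\C[\bx,\by]$-modules $\HY(\FT_n^d)^{a=0}\cong J^d$. Tensoring this isomorphism over $\C[\bx]$ with $\C[\bx^\pm]$ (the functor is exact, so it respects the previous identification) yields
$$\HY(\FT_n^d)^{a=0}\otimes_{\C[\bx]}\C[\bx^\pm] \cong J^d_\bx \cong J_{GL_n}^{(d)} \cong \Delta^d H_*^T(\Sp_\gamma),$$
which is the claimed isomorphism.

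The only genuinely delicate step is verifying that all of these identifications are compatible as $\C[\bx^\pm,\by]$-module isomorphisms and not merely as abstract isomorphisms; in particular, one must check that the $\C[\bx^\pm]$-action coming from the convolution product on $H_*^T(\Lambda)$ (see the remark after Theorem \ref{thm:mainthm}) matches the action by multiplication on the Haiman ideal under the torus-coordinate identification $\C[\Lambda]\cong \C[\bx^\pm]$. This is essentially built into the proof of Theorem \ref{thm:mainthm} via the localization isomorphism, but it is the one point that requires care; everything else is a formal consequence of the three cited theorems.
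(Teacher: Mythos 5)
Your proposal is correct and follows essentially the same route the paper intends: it chains Theorem~\ref{thm:mainthm} (giving $\Delta^d H_*^T(\Sp_\gamma)\cong J_{GL_n}^{(d)}$), the identification of $J_{GL_n}^{(d)}$ with the localization $J^{(d)}_{\bx}$ of Haiman's ideal via $1-x^{\alpha_{ij}^\vee}=1-x_i/x_j$ and unit cancellation, Haiman's equality $J^d=J^{(d)}$, and the Gorsky--Hogancamp identification $\HY(\FT_n^d)^{a=0}\cong J^d$, then localizes. Your closing remark about matching the $\C[\bx^\pm]$-module structures (convolution product on $H_*^T(\Lambda)$ versus multiplication on the ideal) is exactly the right point to flag, and it is indeed built into the localization argument behind Theorem~\ref{thm:mainthm}.
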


Following Theorem \ref{thm:mainthm} for $G=GL_n$, it is interesting to consider the homologies of the powers of the full twist as $d\to \infty$. 
By \cite{Hogancamp}, it is known that the $a=0$ part of the ordinary HOMFLY homology of $\FT^\infty_n$ is given by a polynomial ring on generators $g_1,\ldots, g_n$ of degrees $1,\ldots, n$, which coincide with the exponents of $G$, and in particular with the equivariant BM homology of the affine Grassmannian. In the context of {\it loc. cit.} the corresponding algebra appears as the endomorphism algebra of a categorified Jones-Wenzl projector. The corresponding statement in $y$-ified homology is stronger, and states 
$$\HY(\FT_n^\infty)\cong\C[g_1,\ldots, g_n, y_1,\ldots, y_n]$$
as $\C[\by]$-modules.

\begin{theorem}
Consider the system of inclusions $$H_*^T(\Sp_{t^dz})\to H_*^T(\Sp_{t^{d+1}z}).$$ Taking the colimit in the category of $\C[\bx^\pm,\by]$-modules, we have
$$H^T_*(\Gr_{GL_n})\cong \HY(\FT_n^\infty)\cong \C[g_1,\ldots,g_n,y_1,\ldots,y_n].$$ In particular, the lattice action is trivial. 
\end{theorem}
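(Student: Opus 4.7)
The plan is to realize $H_*^T(\Gr_{GL_n})$ as a filtered colimit of the Borel--Moore homologies $H_*^T(\Sp_{t^dz})$ and then transport this colimit, via Theorem \ref{thm:mainthm} and Corollary \ref{coro:ft}, to a stabilization statement in $y$-ified HOMFLY homology that is the main result of \cite{Hogancamp}.

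First, I would verify that $\Gr_{GL_n}=\bigcup_{d\geq 0}\Sp_{t^dz}$ as $T$-stable ind-schemes: any $gG(\cO)\in\Gr_{GL_n}$ can be represented by $g\in GL_n(\cK)$ of finite pole order $N$, so that $g^{-1}(t^dz)g\in\fg\fl_n\otimes\cO$ whenever $d\geq N$ and thus $g\in\Sp_{t^dz}$. By the ind-scheme definition of Borel--Moore homology from Section \ref{sec:BMhomology}, this exhibits
$$H_*^T(\Gr_{GL_n})\cong\varinjlim_d H_*^T(\Sp_{t^dz})$$
as $\C[\bx^\pm,\by]$-modules, with transition maps the proper pushforwards along the closed inclusions. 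Theorem \ref{thm:mainthm} identifies each term with $\Delta^{-d}J^{(d)}\subset\C[\bx^\pm,\by][\Delta^{-1}]$, and the inclusion $\Delta\cdot J^{(d)}\subseteq J^{(d+1)}$ (coming from the fact that $y_\beta$ divides $\Delta$ together with $y_\beta\cdot\langle 1-\beta^\vee,y_\beta\rangle^d\subseteq\langle 1-\beta^\vee,y_\beta\rangle^{d+1}$ for each positive root $\beta$) should intertwine the geometric transitions with the natural inclusions $\Delta^{-d}J^{(d)}\hookrightarrow\Delta^{-(d+1)}J^{(d+1)}$. Corollary \ref{coro:ft} then rewrites the system as $\{\HY(\FT_n^d)^{a=0}\otimes_{\C[\bx]}\C[\bx^\pm]\}$ with transitions induced by insertion of an additional full twist, and the stabilization theorem of \cite{Hogancamp} identifies the colimit with $\C[g_1,\ldots,g_n,y_1,\ldots,y_n]$, yielding the chain of isomorphisms in the statement.

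For the triviality of the lattice action, I would use that each $\Sp_{t^dz}$ is $\Lambda$-stable under left translation by $t^\lambda$ (as $t^\lambda$ centralizes $z$), so the filtered system inherits a compatible $\C[\bx^\pm]$-action that descends to the colimit. Under the Hogancamp identification, the $\bx$-variables act as scalars on $\C[g_1,\ldots,g_n,y_1,\ldots,y_n]$, hence the lattice action on $H_*^T(\Gr_{GL_n})$ is trivial. The hardest step will be the technical matching of the geometric pushforwards along $\Sp_{t^dz}\hookrightarrow\Sp_{t^{d+1}z}$ with multiplication by $\Delta$ on $J^{(d)}$, and further with the full-twist insertion map on the $y$-ified HOMFLY side at the level of filtered systems (as opposed to individual pieces); once this diagram chase is in place, the triviality of the lattice action in the limit follows from the known behavior of the stable $y$-ified Jones--Wenzl projector.
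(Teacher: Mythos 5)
The paper states this theorem without any proof (the theorem is followed only by a Remark), so there is no argument of the author's to compare against; what follows is an assessment of the sketch itself.

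Your overall strategy is the intended one — the Remark after the theorem already names the system $\varinjlim\Delta^d H_*^T(\Sp_{t^dz})$ with transitions given by multiplication by $\Delta$ — but you misjudge where the difficulties are. The step you flag as ``hardest,'' matching the geometric pushforward with multiplication by $\Delta$, is immediate: all the $\Sp_{t^dz}$ have the same fixed-point set $\Lambda$, and Corollary \ref{cor:localization} embeds each $H_*^T(\Sp_{t^dz})$ into $H_*^T(\Lambda)\otimes\C(\ft)$ compatibly with $T$-equivariant proper pushforward, so the transition map is literally the inclusion of submodules, which after the tautological rescaling by $\Delta^d$ from Theorem \ref{thm:mainthm} becomes $\Delta^{-d}J^{(d)}\subset\Delta^{-(d+1)}J^{(d+1)}$. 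You also do not need to match this system with a full-twist insertion map on the $y$-ified HOMFLY side; it suffices to compute the geometric colimit and compare abstractly with Hogancamp's answer for $\HY(\FT_n^\infty)$, so this piece of the ``hardest step'' is a red herring.

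There are, however, two genuine gaps. First, $\varinjlim_d H_*^T(\Sp_{t^dz})\cong H_*^T(\Gr_{GL_n})$ does not follow directly from the paper's colimit definition of $H_*^T$, which is stated only for filtrations by \emph{projective} $T$-stable pieces; each $\Sp_{t^dz}$ is an infinite ind-scheme, so you must interleave with the Schubert filtration $\Gr^{\leq m}$, using that $\Sp_{t^dz}\cap\Gr^{\leq m}=\Gr^{\leq m}$ for $d\gg 0$. Second, and more seriously, your argument for triviality of the lattice action is circular: you assert that ``the $\bx$-variables act as scalars on $\C[g_1,\ldots,g_n,y_1,\ldots,y_n]$ under the Hogancamp identification,'' but that is exactly what has to be proved. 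It requires an independent input, either from a direct analysis of the $\C[\bx^\pm]$-module structure on $\bigcup_d\Delta^{-d}J^{(d)}$ or from Hogancamp's description of the stable projector; note that the naive left-translation action of $\Lambda$ permutes the $\Z$ connected components of $\Gr_{GL_n}$ nontrivially, so the sense in which the action becomes trivial in the limit has to be pinned down carefully, and your sketch does not do this.
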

\begin{remark}
Note that this looks like the coordinate ring of the open affine where the points on the (isospectral) Hilbert scheme have distinct $x$-coordinates by \cite[Section 3.6]{Hai1}. However, it does not seem to be true that the algebra structure matches (it does on cohomology). Namely, the algebra structure on $H_*^T(\Gr_{GL_n})$ is that of the ''Peterson subalgebra" studied by various authors, but this does not agree with the algebra structure of $\HY(\FT^\infty_n)$ found by Gorsky and Hogancamp.
On the other hand, one expects some relation of $$\varinjlim \Delta^d H^T_*(\Sp_\gamma),$$ where the system of maps is given by multiplication by $\Delta$,  to the categorified Jones-Wenzl projector for the one-column partition.
\end{remark}

We record the following theorem from \cite[Theorem 1.14]{GoHo}, relating commutative algebra in $2n$ variables to the link-splitting properties of $\HY(-)$.
\begin{theorem}
\label{thm:splitting}
Suppose that a link $L$ can be transformed to a link $L'$ by a sequence 
of crossing changes between different components. Then there is a 
homogeneous ``link splitting map" $$\Psi: \HY(L)\to \HY(L')$$ which 
preserves the $\Q[\bx,\by,\mathbf{\theta}]$-module structure. If, in addition, $HY(L)$ is free as a $\Q[\by]$-module, then $\Psi$ is injective. If the crossing changes only involve components i and j, then the link splitting map becomes a homotopy equivalence after inverting $y_i-y_j$, where $i,j$ label the components involved.
\end{theorem}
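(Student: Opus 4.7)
The plan is to work entirely at the level of $y$-ified Rouquier complexes of (bigraded) Soergel bimodules, following the setup of \cite{GoHo}. The key is a local analysis at each crossing, with the global statement obtained by composition.

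First I would establish a local statement at a single crossing. For a positive crossing $\sigma_k$ between two strands belonging to different components labeled $i$ and $j$, I would construct an explicit morphism in the homotopy category of curved complexes of $y$-ified Soergel bimodules, $\Psi_k : T_{\sigma_k} \to T_{\sigma_k^{-1}}$, together with a map in the opposite direction, so that both compositions are homotopic to multiplication by $y_i - y_j$. The construction would mimic the ordinary (non-$y$-ified) Rouquier canonical maps induced by the unit/counit of adjunction on Bott--Samelson bimodules, corrected by a $y$-dependent homotopy to compensate for the curvature. Verifying that this corrected map actually commutes with the curved differential is the key technical lemma.

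Second, I would assemble the global splitting map $\Psi$ as the composition of the $\Psi_k$ over all crossings that must be changed to transform $L$ into $L'$. Since each $\Psi_k$ is a morphism of curved complexes, the composition descends to Hochschild homology and produces a well-defined map $\Psi : \HY(L) \to \HY(L')$. The $\Q[\bx,\by,\mathbf{\theta}]$-linearity is inherited locally: the $x$-, $y$-, and $\theta$-actions are realized through the Soergel bimodule structure and the Hochschild differential, and the local maps are built from bimodule morphisms which by construction intertwine these actions.

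Third, the localization statement follows from the local composition law. Inverting $y_i - y_j$ makes the composition $\Psi_k \circ \Psi_k' \simeq (y_i - y_j)\cdot \mathrm{id}$ into an isomorphism in the homotopy category, so each local $\Psi_k$ becomes a homotopy equivalence of curved complexes over the localized ring. If only components $i,j$ are involved in the sequence of crossing changes, it suffices to invert this single element; in general one inverts all the relevant $y_i - y_j$. Passing to Hochschild homology, $\Psi$ becomes a homotopy equivalence after the stated localization.

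Finally, injectivity under the freeness hypothesis is formal: if $\HY(L)$ is free as a $\Q[\by]$-module, then the localization map $\HY(L) \to \HY(L)[(y_i-y_j)^{-1}]$ is injective, and $\Psi$ factors through this inclusion followed by the isomorphism obtained in the previous step. The main obstacle is the first step, namely producing the explicit local chain maps and verifying compatibility with the curvature; once that is in hand, the remaining steps are largely formal bookkeeping with homotopies, localizations, and the Hochschild differential.
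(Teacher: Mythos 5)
This statement is quoted in the paper directly from Gorsky--Hogancamp \cite[Theorem 1.14]{GoHo}; the paper offers no proof of its own, so there is no internal argument to compare against. Your sketch is a reasonable reconstruction of the argument in \cite{GoHo}: the local crossing-change morphisms in the $y$-ified Rouquier category, the composition law $\Psi_k \circ \Psi_k' \simeq (y_i - y_j)\cdot\id$, the assembly by composition, and the localization/freeness argument for injectivity all match the structure of the original proof. You correctly identify the genuine technical content --- constructing the curved chain maps at a single crossing and verifying compatibility with the curvature --- as the load-bearing step; everything after that is indeed formal. One small point worth tightening: for injectivity you phrase the factorization as ``$\Psi$ factors through this inclusion followed by the isomorphism,'' which reads as if $\Psi$ lands in the localization. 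The cleaner statement is that the composite $\HY(L) \hookrightarrow \HY(L)[(y_i-y_j)^{-1}] \xrightarrow{\sim} \HY(L')[(y_i-y_j)^{-1}]$ is injective and equals $\HY(L) \xrightarrow{\Psi} \HY(L') \to \HY(L')[(y_i-y_j)^{-1}]$, so $\Psi$ is injective as a map into $\HY(L')$. As written this is a minor imprecision, not an error.
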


The cohomological purity of $\Sp_\gamma$ should be compared to the parity statements in \cite[Definitions 1.16, 3.18, 4.9]{GoHo}. Namely, we have the following Theorem.

\begin{theorem}[\cite{GoHo}, Theorem 1.17]
If an $r$-component link $L$ is parity then $$\HY(L)\cong \HHH(L)\otimes \C[\by]$$ is a
free $\C[\by]$-module.

In particular, $\HY(L)/\by \HY(L)\cong \HHH(L)$ as triply graded vector spaces.

Consequently any link splitting map identifies $\HY(L)$ with a $\Q[\bx,\by,\mathbf{\theta}]$-
submodule of $\HY(\text{split}(L))$.
\end{theorem}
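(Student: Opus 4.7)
The plan is to exhibit $\HY(L)$ as the output of a degenerate spectral sequence whose $E_1$-page is already $\HHH(L)\otimes \C[\by]$, with degeneracy forced by parity. Once the freeness $\HY(L)\cong \HHH(L)\otimes \C[\by]$ is established, the mod-$\by$ statement and the link-splitting inclusion are immediate, the latter via Theorem \ref{thm:splitting}.

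First, I would recall that $\HY(L)$ is defined as the Hochschild homology of a curved Rouquier complex $C^{\HY}$ whose underlying triply graded module is $C^{\HHH}(L)\otimes \C[\by]$, the extra terms in the differential being divisible by at least one $y_i$. The $\by$-adic filtration $F^p C^{\HY}:=\fm_\by^p\cdot C^{\HY}$, where $\fm_\by=\langle y_1,\ldots,y_r\rangle$, is exhaustive, Hausdorff, complete, bounded below, and compatible with the differential. Its associated graded complex is literally $C^{\HHH}(L)\otimes \C[\by]$ equipped with the undeformed HOMFLY differential, so the associated spectral sequence has
$$E_1 \;=\; \HHH(L)\otimes \C[\by].$$

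Second, I would run the parity argument on the higher pages. The $r$-th differential $d_r$ raises $\by$-degree by $r$ and shifts homological degree by $+1$, while $y_i$-multiplication preserves homological parity (the $y_i$ sit in homological degree zero). By hypothesis $\HHH(L)$ is concentrated in a single parity of the homological grading, and hence so is $\HHH(L)\otimes\C[\by]$; therefore the image of each $d_r$ lies in the opposite parity, which is zero, and the spectral sequence collapses at $E_1$. Completeness of the filtration then gives $\gr_\by \HY(L)\cong \HHH(L)\otimes \C[\by]$, and since the latter is free over $\C[\by]$ any graded $\C[\by]$-linear section of the projection lifts to an isomorphism $\HY(L)\cong \HHH(L)\otimes \C[\by]$ of triply graded $\C[\by]$-modules.

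Third, the remaining assertions are formal. Reducing modulo $\by$ in the isomorphism just obtained yields $\HY(L)/\by\HY(L)\cong \HHH(L)$ as triply graded vector spaces. For the final claim, invoke Theorem \ref{thm:splitting}: freeness of $\HY(L)$ over $\C[\by]$ is precisely the hypothesis ensuring the link-splitting map $\Psi\colon \HY(L)\to \HY(\mathrm{split}(L))$ is injective, so it realizes $\HY(L)$ as a $\Q[\bx,\by,\theta]$-submodule of $\HY(\mathrm{split}(L))$.

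The main obstacle I expect is the second step, specifically the bookkeeping that $d_r$ really does shift the parity-carrying grading by $1$ on every page. One must verify that the deformation terms in the curved Rouquier differential do not carry compensating grading shifts that could keep $d_r$ inside the supported parity, and this is where the precise triply-graded conventions for Soergel bimodules and Hochschild homology matter. In particular, one needs the $y_i$ to be placed in the grading orthogonal to the parity-selecting one, which is exactly how $y$-ification is set up in \cite{GoHo}.
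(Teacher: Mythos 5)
This theorem is not proved in the paper; it is cited verbatim from Gorsky--Hogancamp \cite[Theorem~1.17]{GoHo}, so there is no ``paper proof'' to compare against. Evaluating your argument on its own terms: the spectral-sequence strategy is the right one and is essentially the mechanism used in \cite{GoHo}. Filtering the $y$-ified Rouquier/Hochschild complex by powers of $\fm_\by=\langle y_1,\ldots,y_r\rangle$ does give an $E_1$-page equal to $\HHH(L)\otimes\C[\by]$, the filtration is finite in each tridegree because the $y_i$ shift the internal grading, hence the spectral sequence converges, and the parity hypothesis kills every $d_r$ since each $d_r$ necessarily has homological ($T$-)degree $+1$. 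Degeneration plus a graded Nakayama argument then gives freeness, and the two remaining claims are formal consequences (the last one via Theorem~\ref{thm:splitting}, as you say).

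One piece of your bookkeeping is off, though the conclusion survives. You write that ``the $y_i$ sit in homological degree zero'' and later that one needs the $y_i$ ``placed in the grading orthogonal to the parity-selecting one.'' In the conventions of \cite{GoHo} (and consistently with the degree count needed to make $d+\sum y_i\xi_i$ a degree-$1$ differential) the $y_i$ carry bidegree $Q^2T^{-2}$, so they have homological degree $-2$, not $0$. What matters for the parity argument is only that this degree is \emph{even}: then $\HHH(L)\otimes\C[\by]$ is concentrated in the same $T$-parity as $\HHH(L)$ and the differentials $d_r$, which have $T$-degree $+1$, must vanish. So the orthogonality heuristic is not the right picture, but the argument you run is correct once one replaces ``degree zero'' by ``even degree.'' You may also want to state explicitly that convergence uses finite-dimensionality of $C^{\HY}$ in each tridegree so that the $\fm_\by$-adic filtration is finite there; without that the passage from $E_1=E_\infty$ to $\gr\,\HY(L)$ and then to $\HY(L)$ itself is not automatic.
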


In the case of the powers of the full twist, Theorem \ref{thm:splitting} is easy to understand. Namely, inverting $y_i-y_j$ we simply remove the ideal $(x_i-x_j,y_i-y_j)$ from the intersection $J$.
This also clearly holds for $J^{(m)}$.
Let us consider similar properties for the anti-invariants, following Haiman \cite{HaiMacdonald}.
\begin{lemma}
The ideal $I$ factorizes locally as the product of $I$ for parabolic subgroups of $\Sn$.

\end{lemma}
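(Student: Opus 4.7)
The plan is to deduce the lemma from Haiman's Polygraph identity $I=\bigcap_{i<j}\langle x_i-x_j,y_i-y_j\rangle$ (the $d=1$ case of \eqref{eq:anti-invts equal diagonals}), together with the fact that localisation commutes with finite intersections of ideals. Fix a set partition $\pi=(S_1|\cdots|S_k)$ of $\{1,\ldots,n\}$ and let $W_\pi=\mathfrak{S}_{S_1}\times\cdots\times\mathfrak{S}_{S_k}\subset\Sn$ denote the associated parabolic (Young) subgroup. Let $I_{S_\ell}\subset\C[\bx,\by]$ denote the extension of the anti-invariant ideal for the diagonal $\mathfrak{S}_{S_\ell}$-action on the subring $\C[(x_j,y_j)_{j\in S_\ell}]$. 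I interpret ``locally'' to mean restriction to the Zariski open subset $U_\pi\subset\Spec\C[\bx,\by]$ where $y_i-y_j$ is invertible for every pair $i,j$ lying in different blocks of $\pi$. The claim to prove is then
\[I\big|_{U_\pi}\;=\;I_{S_1}\cdot I_{S_2}\cdots I_{S_k}\,\big|_{U_\pi}.\]

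First I would apply Haiman's identity to rewrite $I$ as $\bigcap_{i<j}\langle x_i-x_j,y_i-y_j\rangle$. After restricting to $U_\pi$, the factor $\langle x_i-x_j,y_i-y_j\rangle|_{U_\pi}$ becomes the unit ideal whenever $i,j$ lie in different blocks of $\pi$, since the generator $y_i-y_j$ is a unit on $U_\pi$. Since localisation commutes with finite intersections, only the factors indexed by pairs within a single block survive, so
\[I\big|_{U_\pi}\;=\;\bigcap_{\ell=1}^{k}\;\bigcap_{\substack{i<j\\ i,j\in S_\ell}}\langle x_i-x_j,y_i-y_j\rangle\,\Big|_{U_\pi}.\]
Applying Haiman's identity in the smaller polynomial ring $\C[(x_j,y_j)_{j\in S_\ell}]$ (and extending) identifies the inner intersection with $I_{S_\ell}|_{U_\pi}$, giving $I|_{U_\pi}=\bigcap_{\ell}I_{S_\ell}|_{U_\pi}$.

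Finally I would convert the outer intersection into a product. Since the ideals $I_{S_\ell}$ involve pairwise disjoint subsets of the variables $(x_i,y_i)$, the tensor decomposition $\C[\bx,\by]=\bigotimes_\ell\C[(x_j,y_j)_{j\in S_\ell}]$ together with flatness yields the standard equality $\bigcap_\ell I_{S_\ell}=I_{S_1}\cdot I_{S_2}\cdots I_{S_k}$ of ideals in $\C[\bx,\by]$. Combining the displays proves the claim.

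The main obstacle is the appeal to Haiman's Polygraph Theorem, which is the non-trivial commutative-algebra input recalled in \eqref{eq:anti-invts equal diagonals}; once it is granted, the remainder is formal manipulation of intersections, products and localisations. Note that the factorisation is genuinely local: already for $n=2$ and $\pi=(\{1\}|\{2\})$ the product $I_{\{1\}}\cdot I_{\{2\}}$ is the unit ideal while $I$ is the proper ideal generated by $x_1-x_2$ and $y_1-y_2$, so the role of $U_\pi$ is precisely to invert the cross-block differences that obstruct any global factorisation.
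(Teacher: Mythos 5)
Your proof is correct, but it takes a genuinely different route from the paper's. You black-box Haiman's identity $I=\bigcap_{i<j}\langle x_i-x_j,y_i-y_j\rangle$ (the $d=1$ case of \eqref{eq:anti-invts equal diagonals}) and then run formal commutative algebra: after inverting the cross-block differences $y_i-y_j$, the cross-block factors $\langle x_i-x_j,y_i-y_j\rangle$ become unit ideals, localization commutes with the finite intersection, and the surviving intersection of ideals in disjoint variable sets is automatically the product by flatness of the tensor decomposition $\C[\bx,\by]\cong\bigotimes_\ell\C[(x_j,y_j)_{j\in S_\ell}]$. The paper instead works directly with alternating polynomials, following \cite{HaiMacdonald}: given a generator $g$ of the parabolic product ideal and a bump polynomial $h$ concentrated at a point $P$ within its $\Sn$-orbit, the full alternation $\text{Alt}(gh)$ lies in $I$ and, modulo terms vanishing at $P$, equals $g$ times a unit in the local ring at $P$, forcing $g\in I_P$. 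The paper's argument is more elementary and exhibits the combinatorial mechanism behind the factorization without invoking the Polygraph Theorem as a black box, while yours is shorter and makes explicit the link-splitting localization in the $y$-variables that motivates placing this lemma next to \cite[Theorem 1.14]{GoHo}. One small difference in framing: the paper's ``locally'' means localization at a closed point $P$ whose $\Sn$-stabilizer is the parabolic, for which it suffices that for each cross-block pair $i,j$ at least one of $x_i-x_j$, $y_i-y_j$ be invertible; your $U_\pi$ demands specifically that $y_i-y_j$ be invertible and is therefore a strictly smaller open set, though your argument transfers verbatim to the more general point-localization.
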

\begin{proof}
Let $g$ be a generator of $$I'=I(x_1,y_1,\ldots,x_r,y_r)I(x_{r+1},y_{r+1},\ldots, x_n,y_n),$$ alternating in the first $r$ and last $n-r$ 
indices.
Let $h$ be any polynomial which belongs to the localization $J_Q$ at 
every point $Q\neq P$ in the $\Sn$-orbit of $P$, but doesn't vanish at 
$P$. Then $f=\text{Alt}(gh)$ belongs to $I$. The terms of $f$ corresponding to $w\in \Sn$ not stabilizing $P$ belong to $J_P$, by construction of $h$. Since $g$ alternates with 
respect to the stabilizer of $P$, the remaining terms sum to a unit 
times $g$, or more precisely $g\sum_{wP=P}wh$. Hence $g\in I_P$. This means that  $I$ and $I^m$  factorize 
locally as products of the corresponding ideals in the first $r$ and 
last $n-r$ indices. 
\end{proof}
It is curious to note that a similar property  holds for the affine 
Springer fibers.
As shown in \cite[Theorem 10.2]{GKM2}, we have the following 
relationship between equivariant (co)homology of $\Sp_\gamma$ and the 
corresponding affine Springer fiber of an ``endoscopic" group. This is 
to say, $G'$ has a maximal torus isomorphic to $T$ and its roots with 
respect to this torus can be identified with a subset of $\Phi(G,T)$. 
If $G'$ is such a group for $G=GL_n$ (which in this case can just be 
identified with a subgroup of $G$), we have an isomorphism
\begin{equation}\label{eqn-FL0}
H_i^T(\Sp_{\gamma}; \C)_S \cong H_{i-2r}^{T}
(X^T_{\gamma_T}; \C)_S,
  \end{equation}
where $S$ is the multiplicative subset generated by $(1-\alpha^{\vee})$, where the coroots $\alpha^\vee$ run over all coroots {\em not} corresponding to 
$G'$. If we denote this set by $\Phi(G)^+-\Phi(G')^+$, then $r$ is the 
cardinality of this finite set times $d$. For general diagonal $\gamma
$, or alternatively the pure braids discussed in the introduction, $r$ 
is the degree of the corresponding product of Vandermonde determinants, 
or in the automorphic form terminology the homological transfer factor.
The fact that this localization corresponds exactly to link splitting 
in $y$-ified homology (after using the Langlands duality $\bx
\leftrightarrow \by$) is in the author's opinion quite beautiful and 
deep.

\section{Hilbert schemes of points on planar curves}
\label{sec:cptfdjacobians}
\subsection{Hilbert schemes on curves and compactified Jacobians}
In the case $G=GL_n$, which we will assume to be in from now on,
the affine Grassmannian has a description as the space of lattices:
$$G(\cK)/G(\cO)=\{\Lambda\subseteq \cK^n|\Lambda\otimes_\cO\cK=\cK^n, \Lambda \text{ a projective } \cO^n \text{-module}\}$$
We may think of $\Sp_\gamma$ as $\{\Lambda|\gamma\Lambda\subseteq \Lambda\}$. If $\gamma$ is regular semisimple, the characteristic polynomial of $\gamma$ determines a polynomial $P_\gamma(x)$ in $\cO[x]$, which equals the minimal polynomial of $\gamma$. Denote
$A=\cO[x]/P_\gamma(x)$, $F=\text{Frac}(A)$. As a vector space, we then have $F=\cK[x]/P_\gamma(x)\cong \cK^n$, and $\Sp_\gamma$ can be identified with the space of fractional ideals in $F$. On the other hand, this is by definition the Picard factor or local compactified Picard associated to the germ $\cO[[x]]/P_\gamma(x)$ of the plane curve $C=\{P_\gamma(x)=0\}$ \cite{AIK}.

By eg. Ng\^o's product theorem \cite{Ngo}, there is a homeomorphism of stacks 
$$\bPic(C)\cong \Pic(C)\times^{\prod_{x\in C^{sing}\Pic(C_x)}}\prod_{x\in C^{sing}}\bPic(C_x).$$

Call $\gamma$ elliptic if it has anisotropic centralizer over $\cK$, or equivalently $P_\gamma(x)$ is irreducible over $\cK$. There has been a lot of work in determining the compactified Jacobians of $C$, in particular in the cases where $P_\gamma(x)=t^n-x^m$, $\gcd(m,n)=1$ \cite{OY2, LS, Piont, GM1}.

There is always an Abel-Jacobi map $AJ: C^{[n]}\to \bPic(C)$ given by $\cI_Z\mapsto \cI_Z$. It is known that 
for elliptic $\gamma$ this becomes a $\P^{n-2g}$-bundle over $\bPic^n(C)$ for $n>2g$. For nonelliptic $\gamma$ as we are interested in, there is no such stabilization.
On the local factors it is known $AJ$ is an isomorphism for $n>2g$ with $\bPic^n(C_0)$ for the elliptic case, and in the nonelliptic case it is known that $AJ$ is a dominant map to a union of irreducible components in the same connected component of $\bPic(C_0)$.

The precise homological relation between $\bPic(C)$ and $\Hilb^n(C)$ is most concisely summarized in the following Theorem of Maulik and Yun \cite[Theorem 3.11]{MY}.
\begin{theorem}
\label{thm:localcptfd}
Let $\hat{\cO}$ be a planar complete local reduced $k$-algebra of dimension one, with $r$ analytic branches. Assume $\text{char }k=0$ or $\text{char }k>\text{mult}_0(\hat{\cO})$. Then there is a filtration
$P_{\leq i}$ on $H^*(\bPic(\hat{\cO})/\Lambda)$ so that we have the following identity in $\Z[[q,t]]$:
$$\sum_n \sum_{i,j}(-1)^j\dim \Gr_i^WH^j(X)q^it^n=\frac{\sum_i \sum_{k,j}(-1)^j\Gr_i^P\Gr_k^WH^j(\bPic(\hat{\cO})/\Lambda)q^kt^i}{(1-t)^r}.$$
 Here $W_{\leq k}$ is the weight filtration.
\end{theorem}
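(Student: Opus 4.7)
This is the Maulik--Yun product formula relating (weighted) Euler characteristics of Hilbert schemes of points on a planar curve singularity to the corresponding invariants of the local compactified Jacobian modulo the lattice $\Lambda \cong \Z^r$ of branches. My plan is to reduce the local assertion to a global statement about a projective planar curve, prove the global statement via the Abel--Jacobi map and the decomposition theorem, and then extract the local factor using a product formula.

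First I would globalize: choose a projective, reduced, planar curve $C$ with a unique singular point $p$ whose completed local ring is $\hat{\cO}$ and whose components through $p$ are rational and smooth away from $p$. This is possible for any planar $\hat{\cO}$ by standard smoothing/gluing arguments. Let $r$ be the number of analytic branches at $p$; then the compactified Picard $\bPic(C)$ is related to the local Jacobi factor by Ng\^o's product formula
\[
\bPic(C)\;\cong\;\Pic(C)\times^{\prod_{x}\Pic(C_x)}\prod_{x\in C^{\mathrm{sing}}}\bPic(C_x),
\]
which specializes in our setup to a statement entirely determined by the singularity at $p$. Consider the global Abel--Jacobi map
\[
AJ:\;\bigsqcup_{n\ge 0} \Hilb^n(C)\;\longrightarrow\;\bPic(C),\qquad [Z]\mapsto \cI_Z,
\]
which is proper because $C$ is projective and the Hilbert scheme parameterizes length-$n$ subschemes.

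The key step is to analyze $AJ$ via the decomposition theorem. Since $C$ has locally planar singularities, $\bPic(C)$ is irreducible on each connected component and the fibers of $AJ$ are (unions of) projective spaces over $\bPic(C_p)$ for $n$ sufficiently large on each component. Applying BBD to $R(AJ)_{!}\Q_\ell$ produces a splitting into IC-sheaves on $\bPic(C)$ with cohomological shifts; the \emph{perverse filtration} $P_{\leq i}$ on $H^*(\bPic(C)/\Lambda)$ is by definition the filtration induced by the perverse truncations of this direct sum. The $(1-t)^r$ denominator appears as follows: the lattice $\Lambda$ of connected components of $\bPic(C)$ (equivalently, the cocharacter lattice attached to the $r$ branches) is $\Z^r$, and summing the generating function of Euler characteristics of $\Hilb^n(C)$ over all $n$ contributes a factor of $\chi_c(\G_m^r) = (1-t)^r$ in the denominator after dividing out the $\Lambda$-action and using equivariant formality/additivity of compactly supported cohomology.

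The final step is to identify the global numerator with the local one. Using the product formula, the contribution from the smooth components factors out as $1$, and the entire mixed-Hodge-theoretic content is concentrated at $p$. Consequently, the right-hand side of the stated identity, expressed purely in terms of $\Gr_i^P\Gr_k^W H^j(\bPic(\hat{\cO})/\Lambda)$, captures exactly the generating series of $\sum_j(-1)^j \dim \Gr_i^W H^j(\Hilb^n(\hat{\cO}))$ after multiplying by $(1-t)^{-r}$. The main obstacle I anticipate is twofold: first, defining the perverse filtration in the local (non-proper) setting requires choosing the compactification carefully and checking independence, which is done by showing that the relevant IC-sheaves on $\bPic(C)$ restrict to the local model in a way compatible with the $\Lambda$-action; second, tracking the weight filtration $W$ through the decomposition requires the purity/semisimplicity input of BBD together with the fact that $\Pic(C)$ is an extension of an abelian variety by a torus, so that mixedness is under control. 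Given these, the ratio in the theorem is then a direct bookkeeping of perverse and weight gradeds along the Abel--Jacobi fibration.
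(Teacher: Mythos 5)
This statement is quoted verbatim from Maulik--Yun \cite[Theorem 3.11]{MY}; the paper does not prove it but cites it, and your proposal is in effect an attempt to reconstruct the Maulik--Yun argument. Unfortunately the reconstruction misidentifies the source of the perverse filtration and has a faulty accounting for the denominator.

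First, the perverse filtration $P_{\le i}$ on $H^*(\bPic(\hat{\cO})/\Lambda)$ cannot be produced by applying the decomposition theorem to the Abel--Jacobi map $AJ\colon \Hilb^n(C)\to \bPic(C)$. Applying BBD to $R(AJ)_*\Q$ decomposes a complex \emph{on} $\bPic(C)$ whose hypercohomology is $H^*(\Hilb^n(C))$; the perverse truncations then filter the cohomology of the Hilbert schemes, not that of $\bPic(C)$. In the Maulik--Yun setup the perverse filtration comes from the \emph{other} direction: one takes a (locally) versal deformation $\pi:\cC\to B$ with relative compactified Jacobian $\pi^J:\bJac(\cC)\to B$ and relative Hilbert schemes $\pi^{[n]}:\cC^{[n]}\to B$, and the filtration is the one induced on fiber cohomology by the perverse Leray filtration of $\pi^J$. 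The global Macdonald formula relating $R\pi^{[n]}_*\C$ to $\bigoplus_i q^i\,{}^p R^i\pi^J_*\C$ is precisely Theorem 5.3 quoted later in this paper; the local statement at hand is its specialization to the central fiber. The Abel--Jacobi map does enter their argument, but as a tool to compare the two sides of the support theorem, not as the defining fibration for $P$.

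Second, the claim that the fibers of $AJ$ are projective-space bundles over $\bPic(C_p)$ for $n\gg 0$ is false for the locally reducible singularities in question -- the paper itself remarks that in this case $AJ$ only dominates a union of irreducible components of a connected component of $\bPic$, which has infinitely many components, so there is no stabilization. Third, the explanation of the factor $(1-t)^r$ as $\chi_c(\G_m^r)$ does not work: with the convention $\L\mapsto t^2$ used elsewhere in the paper, $\chi_c(\G_m^r) = (t^2-1)^r$, not $(1-t)^r$. The correct origin of $(1-t)^r$ is the contribution of the $r$ smooth formal branches to the generating series of punctual Hilbert schemes: each smooth branch contributes a geometric series $\sum_{n\ge 0}t^n = (1-t)^{-1}$, and dividing these out normalizes the Hilbert-scheme side to match the Jacobi factor. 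So while the broad outline (globalize, use Ng\^o's product formula, apply the decomposition theorem) is in the right spirit, the specific map you propose to use for the filtration is wrong, the stabilization you invoke fails in exactly the reducible case relevant here, and the bookkeeping for the denominator is incorrect.
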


In addition to the relationship of $C^{[n]}$ with the compactified Jacobians, conjectures of Oblomkov-Rasmussen-Shende \cite{OS, ORS} predict 
that they in fact determine the knot homologies of the links of singularities of $C$ and vice versa. For simplicity, assume $C$ has a unique singularity at zero, and let $C_0^{[n]}$ be the punctual Hilbert scheme of subschemes of length $n$ in $C$ supported at zero.

 Then \cite[Conjecture 2]{ORS} states
\begin{conjecture}
$$V_0:=\bigoplus_{n\geq 0} H_*(C^{[n]}_0)\cong \HHH^{a=0}(L).$$ 
\end{conjecture}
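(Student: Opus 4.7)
The plan is to chain together the geometric and algebraic identifications already threaded through this paper. First, using the lattice description of $\Gr_{GL_n}$, I would identify the local compactified Picard $\bPic(C_0)$ of the singularity of $C=\overline{\{x^n=y^{nd}\}}$ at the origin with $\Sp_\gamma/\Lambda$ for $\gamma=\diag(\zeta^i t^d)$, as alluded to after Eq.~\eqref{eq:producttheorem}. Next, I would apply Maulik--Yun (Theorem~\ref{thm:localcptfd}) to re-express $\sum_n t^n \dim H^*(C_0^{[n]})$ in terms of the weight- and perverse-filtered cohomology of $\Sp_\gamma/\Lambda$. This in principle turns the conjecture into a statement purely about $H^*(\Sp_\gamma/\Lambda)$ together with its extra filtrations.

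Second, I would combine Theorem~\ref{thm:mainthm} (which identifies $\Delta^d H_*^T(\Sp_\gamma)$ with $J_{GL_n}^{(d)}$) with the coinvariant argument of Theorem~\ref{thm:bezconj} to describe $H_*(\Sp_\gamma/\Lambda)$ explicitly as an appropriate quotient of $J^d$. On the HOMFLY side, Corollary~\ref{coro:ft} identifies the same ideal $J^d$ with $\HY^{a=0}(\FT_n^d)$; since the torus link $T(n,nd)$ is parity, one then has $\HHH^{a=0}(T(n,nd)) \cong \HY^{a=0}(\FT_n^d)/\by\,\HY^{a=0}(\FT_n^d)$. In this way both sides of the conjecture are controlled by the single ideal $J^d$, and the task reduces to matching the Maulik--Yun generating function on the left with the bigraded Elias--Hogancamp computation on the right under an explicit regrading.

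The main obstacle will be matching the full collection of gradings and filtrations. Maulik--Yun produces a mixed weight-and-perverse refinement of $H^*(V_0)$, while $\HHH$ carries the $q$- and Hochschild gradings coming from the Rouquier complex; bridging these is essentially the geometric heart of the full Gorsky--Oblomkov--Rasmussen--Shende conjecture and is not known in any reasonable generality. I would attack this by using the isospectral Hilbert scheme $Y_n$ of Theorem~\ref{thm:introisospectral} as an intermediary, trying to identify the perverse filtration attached to the Abel--Jacobi map $C_0^{[n]} \to \bPic(C_0)$ with the Hochschild grading on a suitable categorical action of the Rouquier complex of $\FT_n^d$.

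A further, more fundamental, obstacle is that Maulik--Yun only relates $V_0$ to $\bPic/\Lambda$ at the level of generating functions, whereas the conjecture demands a canonical isomorphism of bigraded vector spaces. Extracting such an isomorphism almost certainly requires a categorification of Ng\^o's product formula adapted to Soergel bimodules (or to the coherent sheaves on $Y_n$ appearing implicitly in the Procesi bundle computation of Proposition~\ref{prop:procesi}), which is not currently available in the literature. I would therefore expect a full proof to require genuinely new input beyond what is assembled here, and the most that the present paper can realistically contribute is a verification of the conjecture after taking Poincar\'e polynomials and Euler characteristics.
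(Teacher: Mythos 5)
This statement is not a theorem of the paper: it is \cite[Conjecture~2]{ORS}, quoted verbatim and explicitly attributed to Oblomkov--Rasmussen--Shende, and the paper offers no proof of it. The paper's only contributions toward it are (i) the remark that the Euler-characteristic shadow is a theorem of Maulik \cite{Mau}, and (ii) the verifications of the Poincar\'e-polynomial version for the cusps $\{x^3=y^3\}$ and $\{x^2=y^4\}$ in Proposition~\ref{prop:agrees w eh} and the surrounding examples, via the Migliorini--Shende--Viviani formula matched against Elias--Hogancamp's torus-link computations. Your final paragraph reaches exactly this conclusion, which is the honest assessment; you should lead with it rather than frame the preceding three paragraphs as a ``plan.''

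Within your sketch there is also a technical conflation worth flagging. Theorem~\ref{thm:bezconj} computes the \emph{coinvariants} $H_*(\Sp_\gamma)_\Lambda$ (and only for $d=1$), while the Maulik--Yun formula of Theorem~\ref{thm:localcptfd} is phrased in terms of $H^*(\bPic(\hat\cO)/\Lambda)$, i.e.\ the (co)homology of the \emph{quotient space}. These agree only up to a Cartan--Leray-type spectral sequence $H_p(\Lambda; H_q(\Sp_\gamma)) \Rightarrow H_{p+q}(\Sp_\gamma/\Lambda)$, whose higher terms you are silently discarding; since $\Lambda \cong \Z^{n-1}$ acts with nontrivial higher group homology, this is not automatic. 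The other gaps you name --- that Maulik--Yun is only a generating-function identity, that the perverse/weight filtrations on the curve side have no known identification with the $(q,t,a)$-gradings on the Soergel side, and that a categorified version of Ng\^o's product formula is missing --- are indeed the substantive obstructions, and they are precisely why the statement remains a conjecture in the paper and in the literature.
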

\begin{remark}
On the level of Euler characteristics, this is known to be true by \cite{Mau}.
\end{remark}
We should mention that there is yet another reason to care about $C^{[n]}$; the Hilbert schemes and their Euler characteristic generating functions are closely related to BPS/DT invariants as shown in \cite{Pand, PT}. In \cite{Pand} some of the examples we are interested in are studied.

In earlier work \cite{Kiv}, the author considered the Hilbert schemes of points on reducible, reduced planar curves $C/\C$.
The main result in {\it loc. cit} is as follows.

\begin{theorem}[\cite{Kiv}, Theorem 1.1]
\label{thm:algebra}
If $C=\bigcup_{i=1}^mC_i$ is a decomposition of $C$ into irreducible
components, the space $V=\bigoplus_{n\geq 0}H_*(C^{[n]},\Q)$ carries a bigraded action of the algebra $$A=A_m:=\Q[x_1,\ldots, x_m, \partial_{y_1},\ldots, \partial_{y_m}, \sum_{i=1}^m y_i, \sum_{i=1}^m \partial_{x_i}],$$ where $V=\bigoplus_{n,d\geq 0} V_{n,d}$ is graded by number of points $n$ and homological degree $d$. Moreover, the operators $x_i$ have degree $(1,0)$ and the operators $\partial_{y_i}$ have degree $(-1,-2)$ in this bigrading. In effect, the operator $\sum y_i$ has degree $(1,2)$ and the operator $\sum \partial_{x_i}$ has degree $(-1,0)$.
\end{theorem}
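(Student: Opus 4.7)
I would prove this by producing each generator of $A_m$ as an explicit correspondence on the Hilbert schemes $C^{[n]}$ and then checking the defining relations. For each $i$, choose a smooth point $c_i\in C_i$ away from the singular locus of $C$. The closed embedding $\iota_i\colon C^{[n]}\hookrightarrow C^{[n+1]}$ defined by $Z\mapsto Z\sqcup\{c_i\}$ on the open locus where $c_i\notin\mathrm{supp}(Z)$ and extended by taking the Zariski closure inside the nested Hilbert scheme $C^{[n,n+1]}=\{Z\subset Z'\}$, produces $x_i:=(\iota_i)_*$ on Borel--Moore homology. Its refined Gysin transpose yields $\partial_{y_i}\colon H_*(C^{[n+1]})\to H_*(C^{[n]})$. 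The Nakajima-style operators $\sum_i y_i$ and $\sum_i \partial_{x_i}$ are the Heisenberg correspondences attached to the full nested Hilbert scheme with its two projections $p,q\colon C^{[n,n+1]}\to C^{[n]},C^{[n+1]}$ together with the residual map $r\colon C^{[n,n+1]}\to C$ remembering the support of $I_Z/I_{Z'}$; capping with the (Borel--Moore) fundamental class $[C]\in H_2(C)$ and pushing forward gives a ``floating'' point-adding operator that does not depend on a choice of component, and its adjoint is the floating removal.

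\textbf{Bigrading.} The bidegrees read off from the constructions. The map $(\iota_i)_*$ raises the number of points by $1$ and, being pushforward along a closed embedding whose image lies in a fixed top-dimensional stratum, contributes no shift in homological degree, giving $(1,0)$. The transpose $\partial_{y_i}$ is therefore of bidegree $(-1,-2)$ once one accounts for the shift by twice the codimension of the embedding. The floating operator $\sum y_i$ picks up an extra factor of $[C]$, hence bidegree $(1,2)$, and the adjoint $\sum \partial_{x_i}$ has bidegree $(-1,0)$.

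\textbf{Relations.} The algebra $A_m$ is the subalgebra of $\mathrm{Weyl}(\A^{2m})$ generated by $x_i,\partial_{y_i},\sum y_i,\sum \partial_{x_i}$, and the nontrivial relations to verify are the commutativity of $\{x_i\},\{\partial_{y_i}\}$, the vanishing of $[x_i,\partial_{y_j}]$ for all $i,j$, the vanishing of $[x_i,\sum y_j]$ and $[\partial_{y_i},\sum \partial_{x_j}]$, the commutation $[\sum y_j,\sum \partial_{x_k}]=0$, and the two normalization identities $[\partial_{y_i},\sum_j y_j]=1$ and $[\sum_j \partial_{x_j},x_i]=1$. The easy identities reduce to set-theoretic identifications of composed correspondences on the appropriate double-nested Hilbert schemes $C^{[n,n+1,n+2]}$ and to disjointness of the loci where the two operations act (since for $i\neq j$ the points $c_i,c_j$ lie on distinct components). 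The essential content is the two normalization identities; these should be obtained by a local intersection-theoretic calculation in a neighbourhood of $c_i$, where $C^{[n]}$ is smooth (since $c_i$ is a smooth point of $C$) and both composed correspondences reduce to standard Nakajima computations on $\mathrm{Sym}^n$ of a smooth curve.

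\textbf{Main obstacle.} The hard part will be controlling the intersections precisely at the singularities of $C$: the floating operator $\sum y_j$ ranges over all of $C$, including the singular locus, and a priori its composition with the point-adding operator $x_i$ (which is defined using a correspondence whose closure may have excess intersection with the singular fibres of $C^{[n,n+1]}$) could receive contributions from the non-planar, non-reduced behaviour of $C^{[n]}$ near collisions at singular points. The plan for dealing with this is to first verify the relations on the open locus of $C^{[n]}$ where the configurations miss the singularities of $C$, then use the product decomposition of $C^{[n]}$ near a reducible planar singularity coming from the $m=\#\text{branches}$ factorization of $\widehat{\cO}_{C,x}$ (in the spirit of Theorem \ref{thm:localcptfd}) to reduce the local computation to independent Heisenberg actions on each branch. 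Combined with the locally free/pure structure of $H_*(C^{[n]})$ in the planar case, this yields the two normalization identities and completes the verification.
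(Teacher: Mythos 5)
Your skeleton matches the approach in \cite{Kiv}: the operators $x_i$, $\partial_{y_i}$ are built from the Hecke embedding $\iota_i\colon C^{[n]}\hookrightarrow C^{[n+1]}$ at a chosen smooth point $c_i\in C_i$ and its Gysin transpose, while $\sum y_i$ and $\sum\partial_{x_i}$ are the Nakajima correspondences through $C^{[n,n+1]}$; the bidegree bookkeeping and the list of relations of $A_m$ that actually need checking are also right. (One small slip: $\iota_i$ should be defined everywhere by $I_Z\mapsto I_Z\cdot\fm_{c_i}$, which already has the correct colength even when $c_i\in\mathrm{supp}(Z)$ because $c_i$ is a smooth point; there is no need to pass to a Zariski closure.)

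The genuine gap is in your treatment of the two normalization identities. First, the assertion that ``$C^{[n]}$ is smooth \ldots since $c_i$ is a smooth point of $C$'' is false: $C^{[n]}$ is singular at every $Z$ whose support meets a singular point of $C$, regardless of whether $Z$ also has support at $c_i$, so the commutators $[\partial_{y_i},\sum y_j]$ and $[\sum\partial_{x_j},x_i]$ do not reduce on the nose to Nakajima computations on $\Sym^n$ of a smooth curve. Second, the claimed ``product decomposition of $C^{[n]}$ near a reducible planar singularity coming from the $m=\#\text{branches}$ factorization of $\widehat{\cO}_{C,x}$'' does not exist: the complete local ring at a point with several analytic branches is not a product ring (only its normalization is), the punctual Hilbert scheme there does not factor into Hilbert schemes of the branches, and Theorem \ref{thm:localcptfd} is a generating-series identity relating Hilbert schemes to compactified Jacobians rather than a local factorization. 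Finally, ``verify the relations on the open locus'' is not a meaningful operation on Borel--Moore homology: classes and operators do not decompose along an open/closed partition in a way that lets one check commutation relations stratum by stratum. What is actually used in \cite{Kiv}, following Rennemo, Shende, and Migliorini--Shende in the integral case, is an embedding of $C$ into a locally versal family $\cC\to B$ of locally planar curves, over which the relative (and relative nested) Hilbert schemes have smooth total space; the correspondences and their commutators are then controlled by intersection theory on these smooth total spaces, and the statement for $C$ is obtained by restriction to the special fibre. Without that deformation step your plan has no way to control the excess intersections along the singular locus, and the proof of the normalization relations does not go through as sketched.
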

\begin{example}
\label{ex:node}
In the case $x^2=y^2$,
we have 
$$V=\frac{\C[x_1,x_2,y_1,y_2]}{(x_1-x_2)\C[x_1,x_2,y_1+y_2]}$$ as $\C[x_1,x_2,y_1+y_2,\partial_{x_1}+\partial_{x_2},\partial_{y_1},\partial_{y_2}]$-modules.
\end{example}

\subsection{Conjectural description in the case $C=\{x^n=y^{dn}\}$}
As discussed in the introduction, the representation in Example \ref{ex:node} very similar to the main result in \cite{GKM2} when $G=GL_2$ and $d=1$. 
We now recall said theorem.
\begin{theorem}
\label{thm:gkmmain}
Let $G$ be a connected reductive group and $\gamma=zt^d$ as before. Then the ordinary (i.e. not Borel-Moore) $T$-equivariant homology of $\Sp_\gamma$ is a $\C[\Lambda]\otimes \C[\ft^*]$-module, where $\ft$ acts by derivations, and 
$$H_{*,ord}^T(\Sp_\gamma)\cong\frac{\C[\Lambda]\otimes \C[\ft]}{\sum_{\alpha\in\Phi^+}\sum_{k=1}^d(1-x^{\alpha^\vee})^k\C[\Lambda]\otimes \ker(\partial_\alpha^k)}.$$
\end{theorem}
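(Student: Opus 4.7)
The plan is to mirror the proof of Theorem \ref{thm:mainthm}, replacing the Borel--Moore description of $H_*^T(\Sp_\gamma)$ as a submodule of $\C[\Lambda]\otimes \C(\ft)$ cut out by residue conditions (Corollary \ref{cor:localization}) with a dual, quotient-style GKM description of ordinary equivariant homology. The first step is to establish the following: for an equivariantly formal GKM $T$-ind-scheme $X$ with $\iota:X^T\hookrightarrow X$, the pushforward $\iota_*:H_{*,ord}^T(X^T)=\C[\Lambda]\otimes \C[\ft]\to H_{*,ord}^T(X)$ is surjective, and its kernel $R$ is generated by relations supported on the rank-one $T$-invariant subvarieties. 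This is the ordinary-homology counterpart of Corollary \ref{cor:localization}, and is in effect the computation performed for $G=GL_2$ in \cite{GKM2}; for general $G$ it can be derived by dualizing the Borel--Moore description via Proposition \ref{prop:dualizingmodule} together with the equivariant formality of $\Sp_\gamma$.

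Second, by \cite[Lemma 5.12]{GKM2} the one-dimensional $T$-orbits of $\Sp_{t^dz}$ decompose as the disjoint union (modulo the common fixed-point set $\Lambda$) of the one-dimensional orbits of the rank-one subfibers $\Sp_{t^dz}^\alpha$ indexed by $\alpha\in \Phi^+$. The relation submodule therefore splits as $R=\sum_{\alpha\in \Phi^+} R_\alpha^{(d)}$, where $R_\alpha^{(d)}$ denotes the contribution from $\Sp_{t^dz}^\alpha$ alone.

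Third, I would compute $R_\alpha^{(d)}$ using the semisimple rank-one case. By \cite{ChLa}, each $\Sp_{t^dz}^\alpha$ is a disjoint union (indexed by $\Lambda/\langle \alpha^\vee\rangle$) of infinite chains of Spaltenstein varieties $sp_d$ glued along $sp_{d-1}$. Running an $SL_2$ computation parallel to Lemma \ref{lem:sl2}, but keeping track of the quotient presentation of ordinary homology rather than the submodule presentation of Borel--Moore homology, should give $R_\alpha^{(d)}=\sum_{k=1}^d (1-x^{\alpha^\vee})^k\,\C[\Lambda]\otimes \ker(\partial_\alpha^k)$; this is the formal ``dual" of the rank-one identification $y_\alpha^d H_*^T(\Sp_{t^dz}^\alpha)=J_\alpha^d$ from Corollary \ref{cor:rk1}, reflecting the duality between intersections of submodules and sums of quotient relations. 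Combining this with the previous step yields the stated formula.

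The main obstacle is the rigorous setup of the quotient GKM description in the first step: for a non-proper ind-scheme, $H_{*,ord}^T$ is not automatically generated by fixed-point classes alone, and fundamental classes of positive-dimensional $T$-invariant subvarieties may appear as additional generators with their own relations. The equivariant formality of $\Sp_\gamma$, following from the affine paving of \cite{GKM1}, together with the duality between Borel--Moore and ordinary equivariant homology encoded by Proposition \ref{prop:dualizingmodule}, should guarantee that all such extra generators and relations are already captured by the rank-one subfibers, in direct parallel with the Borel--Moore case; once this is in place, the $SL_2$ reduction and summation over positive roots proceed routinely.
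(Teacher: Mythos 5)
The paper does not prove Theorem \ref{thm:gkmmain} at all: it is explicitly \emph{recalled} (``We now recall said theorem'') and is a restatement of the main results of \cite{GKM2} (their Sections~5--13). So there is no proof in the paper to compare against; the theorem is black-boxed. What GKM2 actually do is quite a bit more than a GKM-localization argument: they construct the affine paving of $\Sp_\gamma$ indexed by $\Lambda$, prove equivariant formality, establish a ``separating'' (endoscopic) localization theorem (their Theorem~10.2, quoted in this paper around equation \eqref{eqn-FL0}) to reduce to semisimple rank one, and then do a delicate explicit computation in the rank-one case via chains of cells (their Sections~6, 12--13). Your proposal captures the high-level skeleton of that argument (surjectivity from fixed points, rank-one reduction via \cite[Lemma~5.12]{GKM2}, $SL_2$ computation), but not the actual substance of any of the three steps.

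There is a concrete gap in your step 1. You propose to get the quotient description by ``dualizing the Borel--Moore description via Proposition~\ref{prop:dualizingmodule}.'' That proposition identifies $H_*^T(X)$ (Borel--Moore) with $\Hom_{\C[\ft]}(H^*_T(X),\C[\ft])$, the $\C[\ft]$-linear dual. Dualizing Corollary~\ref{cor:localization} in this sense just gives back the Borel--Moore statement of Theorem~\ref{thm:mainthm} — the submodule $J^{(d)}_G$ — not a quotient of $\C[\Lambda]\otimes\C[\ft]$. Ordinary equivariant homology in the sense of the Borel construction is the \emph{graded $\C$-linear} dual of $H^*_T(X)$, which is a different operation: it turns the GKM relations (divisibility of $\phi_\lambda-\phi_\mu$ by powers of $y_\alpha$) into annihilator conditions, and that is where the $\ker(\partial_\alpha^k)$ factors come from. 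Making this precise for the ind-scheme $\Sp_\gamma$, and verifying surjectivity of $\iota_*$ and that the kernel splits as $\sum_\alpha R_\alpha^{(d)}$, is exactly the content of GKM2's paving/purity/separation machinery; you acknowledge this ``main obstacle'' at the end and assert that equivariant formality ``should guarantee'' it, but that is not a proof. Likewise, the identification $R_\alpha^{(d)}=\sum_{k=1}^d(1-x^{\alpha^\vee})^k\,\C[\Lambda]\otimes\ker(\partial_\alpha^k)$ is asserted as a ``formal dual'' of Corollary~\ref{cor:rk1} but is not computed. As written, the proposal is a plan, not a proof — and in any case it is not the route this paper takes, which is simply to cite \cite{GKM2}.
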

\begin{example}
If $G=GL_2$, $d=1$, we have
$$H_{*,ord}^T(\Sp_\gamma)=\frac{\C[x_1^\pm,x_2^\pm,y_1,y_2]}{(1-x_1x_2^{-1})\C[x_1^\pm,x_2^\pm,y_1+y_2]}.$$
\end{example}

The above examples, as well as Examples \ref{ex:tacnode}, \ref{ex:three} and Theorem \ref{thm:mainthm} motivate us to conjecture the following.
\begin{conjecture}\label{conjecture}
Let $C=\overline{\{x^n=y^{dn}\}}$ be the compactification with unique singularity and rational components of the curve defined by the affine equation $\{x^n=y^{dn}\}$. Then as a bigraded $A_n$-module (see Theorem \ref{thm:algebra}), we have 
\begin{equation}\label{eq:conjecture}
V:=\bigoplus_{n\geq 0} H_*(C^{[n]},\Q)\cong 
\frac{\Q[x_1,\ldots, x_n,y_1,\ldots, y_n]}{\sum_{i\neq j}\sum_{k=1}^{d} (x_i-x_j)^k\otimes \ker(\partial_{y_i}-\partial_{y_j})^k}.
\end{equation}

\end{conjecture}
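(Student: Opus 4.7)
The plan is to identify both sides of \eqref{eq:conjecture} with the ordinary $T$-equivariant Borel--Moore homology of a ``positive'' variant of the affine Springer fiber $\Sp_{zt^d}$, and then use the geometric description of this positive part in terms of Hilbert schemes on the singularity $\{x^n = y^{dn}\}$. Throughout, take $z = \diag(1,\zeta,\ldots,\zeta^{n-1})$ with $\zeta$ a primitive $n$th root of unity so that the characteristic polynomial of $\gamma = zt^d$ is exactly $x^n - t^{dn}$, matching the local equation of $C$ at its unique singularity.

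First, I would establish a ``polynomial'' analogue of Theorem~\ref{thm:gkmmain}. The positive part $\Sp^+_\gamma := \Sp_\gamma \cap \Gr^+_{GL_n}$ has $\tT$-fixed points $\Lambda^+$, and a GKM calculation entirely parallel to the one carried out for $\Sp_\gamma$ (with $\C[\Lambda]$ replaced by the polynomial ring $\C[\bx]$ and each factor $(1-x^{\alpha^\vee})^k = (1-x_i/x_j)^k$ cleared of denominators to $(x_i - x_j)^k$) should yield
$$H^T_{*,ord}(\Sp^+_\gamma) \cong \frac{\C[x_1,\ldots,x_n,y_1,\ldots,y_n]}{\sum_{i\neq j}\sum_{k=1}^d (x_i-x_j)^k \ker(\partial_{y_i}-\partial_{y_j})^k}.$$
This is the same recipe that, in the $n=2$, $d=1$ case, recovers Example~\ref{ex:node} from the $GL_2$ specialization of Theorem~\ref{thm:gkmmain}. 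The verification reduces to the rank-one case via the localization sequence and Corollary~\ref{cor:rk1}, plus the observation that multiplication by $x_j$ is an isomorphism on the relevant summands in the positive region.

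Next, I would identify $\Sp^+_\gamma$ with the generating variety $\bigsqcup_m C^{[m]}$ (or equivalently match their $A_n$-modules of equivariant Borel--Moore homology classes). The remark after Theorem~\ref{thm:rationalprocesi} asserts this identification for $n=2$, and indicates it for general $n$ in forthcoming work. Geometrically, the lattice description of $\Gr_{GL_n}$ identifies a point of $\Sp^+_\gamma$ with a fractional ideal $I \subset \mathrm{Frac}(\cO[x]/(x^n-t^{dn}))$ contained in the local ring, and such an ideal corresponds to a finite-length subscheme of $C$ supported along the singular fiber together with its extension to the rational components. Because each component of $C$ is $\P^1$, the global Abel--Jacobi picture for $C$ is entirely controlled by this local data and the choice of smooth floating points, which accounts for the Nakajima operators $\sum_i y_i$ and $\sum_i\partial_{x_i}$ in the $A_n$-action.

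Finally, I would verify that the $A_n$-module structure on $V$ coming from Theorem~\ref{thm:algebra} matches the natural $A_n$-action on the quotient module. The component operators $x_i$ and $\partial_{y_i}$ correspond to adding/removing a point at the chosen base point on the $i$th component $C_i$ of $C$; the symmetric operators $\sum_i y_i$ and $\sum_i \partial_{x_i}$ are Nakajima correspondences adding/removing a floating point; and on the affine Springer side these correspond respectively to translation/multiplication in the $i$th $\C[x_i,y_i]$-factor and to their symmetrized analogues. Compatibility of the two actions on a single cyclic vector (the class of the empty subscheme / the unit) is enough to conclude by the cyclicity of both modules.

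\textbf{Main obstacle.} The crucial and least mechanical step is the identification of $\Sp^+_\gamma$ with the Hilbert schemes of the compactified curve (or more precisely, matching their BM homologies as graded $A_n$-modules). For $n=2$ this is noted as straightforward, but for general $n$ it requires a careful analysis of the passage from lattices to ideals to length-$m$ subschemes of $C$, and a verification that ``positivity'' in $\Gr_{GL_n}$ corresponds to ideals contained in the structure sheaf of $C$. A secondary subtlety is that the $A_n$-action constructed in \cite{Kiv} via Nakajima correspondences must be matched with the convolution action on $H^T_*(\Sp^+_\gamma)$; the compatibility of the ``floating point'' operators on both sides seems to be the most delicate point, as these are global (not localizable at the singularity) and must be identified using the rational structure of each component.
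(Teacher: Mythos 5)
This statement is an open \emph{conjecture} in the paper (Conjecture~\ref{conjecture}); the paper does not prove it, and only offers supporting evidence via special cases: the $n=2$, $d=1$ case (Theorem~\ref{thm:node}, cited from \cite{Kiv}), a verification of the $q^2 t^2$ coefficient for general $n$ with $d=1$ via the explicit description of $C^{[2]}$, and numerical checks of the bigraded Poincar\'e series against the Migliorini--Shende--Viviani formula for $\{x^3=y^3\}$ and $\{x^2=y^4\}$ (Examples~\ref{ex:three}, \ref{ex:tacnode}, Proposition~\ref{prop:agrees w eh}). Your write-up is therefore a proposal for resolving an open problem, not a reconstruction of a proof in the paper, and you should say so explicitly.

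As a strategy sketch your outline has the right shape but conflates two different Hilbert schemes. The remark following Theorem~\ref{thm:rationalprocesi} identifies $\Sp^+_{tz}$ (for $n=2$, and conjecturally for higher $n$ in cited forthcoming work) with the Hilbert scheme of points on the curve \emph{singularity} $\{x^n=y^{dn}\}$ — i.e. the punctual/local Hilbert schemes. The conjecture, however, is about $C^{[m]}$ for the \emph{projective} curve $C$ with $n$ rational components, whose homology is substantially larger (compare Theorem~\ref{thm:localcptfd} and the MSV formula in Theorem~\ref{thm:msvthm}, which require dividing out by factors like $(1-t)^r$ and summing over subcurve decompositions). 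Your proposed identification $\Sp^+_\gamma \leftrightarrow \bigsqcup_m C^{[m]}$ does not account for this local-to-global passage, and that is precisely where the ``floating point'' Nakajima operators $\sum_i y_i$ and $\sum_i \partial_{x_i}$ do real work: they are global operators that have no obvious avatar on the lattice/affine-Springer side. A second unaddressed point is the change of meaning of the variables $y_i$: in $H^T_{*,ord}(\Sp^+_\gamma)$ they are equivariant parameters in $\C[\ft]$, while on the Hilbert-scheme side they are the fundamental classes $[C_i]\in H_2(C^{[1]})$ in non-equivariant homology, so some argument is needed that the two gradings and module structures agree rather than merely look similar. Until these two gaps — local vs.\ global, and the identification of $y_i$'s — are bridged, the proposal does not close the conjecture; it essentially restates the heuristic the paper itself articulates in its remarks and defers to forthcoming work.
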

\begin{remark}
In some sense, passing from the equivariant homology of affine Springer 
fibers to the Borel-Moore version involves only half of the variables, namely the equivariant 
parameters. It is not immediate from the construction of the $A_m$-
action in \cite{Kiv} what the analogous procedure would be to pass to $H^*(C^{[n]})$ from $H_*(C^{[n]})$. It would be interesting to know, at 
least on the level of bigraded Poincar\'e series, how to compare $V$ to 
the ideal $J^d\subset \C[\bx,\by]$, assuming that Conjecture 
\ref{conjecture} is true. The $q,t$-character of $J^d$ is by work of 
Haiman \cite{Hai1} known to be given by the following inner product of 
symmetric functions: $$\dim_{q,t} J^d=(\nabla^d p_1^n, e_n).$$ Thanks 
to work of Gorsky and Hogancamp \cite{GoHo} we then also know that (up to 
regrading) the bigraded character of $HY^{a=0}(T(n,dn))$ is given by 
the same formula.
\end{remark}

For some support for the conjecture, let us consider the following examples. 

\begin{theorem}[\cite{Kiv}]
\label{thm:node}
When $C=\overline{\{x^2=y^2\}}$, we have that 
\begin{equation}
V=\bigoplus_{n\geq 0} H_*(C^{[n]})\cong \frac{\C[x_1,x_2,y_1,y_2]}{\C[x_1,x_2,y_1+y_2](x_1-x_2)}
\end{equation}
as an $A_2$-module, where $$A_2=\C[x_1,x_2,\partial_{x_1}+\partial_{x_2},y_1+y_2,\partial_{y_1},\partial_{y_2}]\subset \text{Weyl}(\A^{4}).$$
\end{theorem}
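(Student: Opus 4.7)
The plan is to construct an explicit isomorphism $\Phi: \mathrm{RHS} \to V$ of $A_2$-modules, relying on the $A_2$-action of Theorem \ref{thm:algebra} on $V$. Write $C = C_1 \cup_p C_2$ for our nodal rational curve, with a chosen smooth base point $c_i \in C_i \setminus \{p\}$. First, I would put a torus action $T = (\G_m)^2$ on $C$ scaling each component separately; the induced action on $C^{[n]}$ has isolated fixed points, enumerated by triples $(I_p, n_1, n_2)$ where $n_i$ counts points at $\infty_i \in C_i$ and $I_p$ is a $T$-invariant monomial ideal of colength $k = n - n_1 - n_2$ in $\cO_p = \C[[x,y]]/(xy)$. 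Such monomial ideals of colength $k \geq 1$ are exactly $(x^a, y^b)$ with $a+b = k+1$ and $a,b \geq 1$. A direct count then gives $\chi(C^{[n]}) = (n+1)(n^2+2n+6)/6$, matching the Hilbert series of the RHS in point-degree $n$ (which one computes as $\binom{n+3}{3} - \binom{n+1}{2}$), and a Bia\l ynicki-Birula decomposition shows $H_*(C^{[n]})$ is free over $\C$, concentrated in even degrees, with basis indexed by the $T$-fixed points.

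Next I would define $\Phi$ on $A_2$-module generators. The RHS is generated as an $A_2$-module by the family $\{y_1^k\}_{k \geq 0}$, since the identity $y_2 = (y_1+y_2) - y_1$ lets one rewrite any monomial in $y_1, y_2$ as a $\C[x_1, x_2, y_1+y_2]$-linear combination of the $y_1^j$. I set $\Phi(y_1^k) := \iota_*[(C_1)^{[k]}] \in H_*(C^{[k]})$ for $\iota: (C_1)^{[k]} \hookrightarrow C^{[k]}$ the closed immersion, and extend by requiring $\Phi(a \cdot y_1^k) = a \cdot \Phi(y_1^k)$ for $a \in A_2$. As a sanity check, $\Phi(y_2) = (y_1+y_2)\cdot 1 - \Phi(y_1) = [C] - [C_1] = [C_2]$, as expected geometrically.

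The heart of the argument is verifying well-definedness, which reduces to showing that the submodule $(x_1 - x_2)\C[x_1, x_2, y_1+y_2]$ maps to zero. Since the multiplication operators $(x_1 - x_2)$ and $P(x_1, x_2, y_1+y_2)$ commute inside $A_2$, it suffices to establish $(x_1 - x_2) \cdot 1 = 0$, i.e., $[c_1] = [c_2] \in H_0(C)$, which follows from the connectedness of $C$. Crucially, no further relation is imposed: in particular $(x_1 - x_2) \cdot \Phi(y_1^k)$ remains nonzero for $k \geq 1$ (corresponding to the fact that $(x_1 - x_2)y_1^k \notin (x_1-x_2)\C[x_1,x_2,y_1+y_2]$), with the precise geometric interpretation being residual cycles supported on the punctual Hilbert scheme of the node. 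Finally, injectivity of $\Phi$ follows from the matching dimension count, and surjectivity follows from the Bia\l ynicki-Birula basis by expressing each $T$-fixed-point class as an $A_2$-combination of some $[(C_1)^{[k]}]$.

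I expect the main obstacle to be this last surjectivity step: one must match each $T$-fixed subscheme of $C^{[n]}$--which involves a particular monomial ideal $(x^a, y^b)$ at the node and a distribution of extra points at $\infty_1, \infty_2$--to the correct combination of Nakajima-type operators $(y_1+y_2)$ and base-point operators $x_1, x_2$ applied to some $[(C_1)^{[k]}]$. A naive sliding-through-the-node argument would incorrectly suggest the stronger relation $(x_1 - x_2) \cdot [C_1] = 0$, whereas the correct statement involves residual cycles on $C_p^{[2]}$; the explicit identification therefore requires careful bookkeeping of signs and multiplicities arising from the interaction of the two components at the node, most efficiently handled via equivariant localization on $C^{[n]}$.
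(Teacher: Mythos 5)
This theorem is imported from \cite{Kiv}; the present paper states it without proof, so there is no in-paper argument to compare against. Evaluating your proposal on its own terms: the strategy (torus fixed points plus Euler-characteristic count plus a paving plus an explicit $A_2$-equivariant map) is sensible, and your arithmetic is correct: $\binom{n+3}{3}-\binom{n+1}{2}=(n+1)(n^2+2n+6)/6$ matches the fixed-point count with $k$ colength-$k$ monomial ideals at the node for $k\geq 1$ and an $(n_1,n_2)$ distribution at the two points at infinity.

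However, there are genuine gaps. First, you cannot invoke the Bia\l ynicki--Birula decomposition as stated: for a nodal curve $C$ the Hilbert schemes $C^{[n]}$ are singular (already the length-two punctual Hilbert scheme at the node contributes a non-smooth point of $C^{[2]}$), and the standard BB theorem requires smoothness; one must either construct an affine paving by hand, compare with a smooth model, or use the MSV/support-theorem machinery (as the paper does in its other worked examples) to establish that $H_*(C^{[n]})$ is free and even. Second, your well-definedness check is incomplete. Defining $\Phi$ on the $A_2$-generators $y_1^k$ and extending $A_2$-linearly requires not just $(x_1-x_2)\cdot\Phi(1)=0$, but also compatibility with the remaining generators: one must verify geometrically that $\partial_{y_1}\bigl[(C_1)^{[k]}\bigr]=k\bigl[(C_1)^{[k-1]}\bigr]$, $\partial_{y_2}\bigl[(C_1)^{[k]}\bigr]=0$, and $(\partial_{x_1}+\partial_{x_2})\bigl[(C_1)^{[k]}\bigr]=0$. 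The last of these is forced by degree (the target group $H_{2k}(C^{[k-1]})$ vanishes), but the first two are nontrivial claims about the point-removal correspondences of Theorem \ref{thm:algebra} and must be checked. Third, you correctly flag that the surjectivity step, expressing each $T$-fixed-point class as an explicit $A_2$-word applied to some $[(C_1)^{[k]}]$, is the hardest part; this is the actual content of the theorem, and it is left undone in the proposal. Until that bookkeeping is carried out, the argument establishes at best an injection of the right numerical size, not an isomorphism of $A_2$-modules.
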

\begin{remark}
Note that we get an extremely similar looking result for $H_*^H(\Sp_{\diag(t,-t)})$ and $H_*(C^\bullet)$, where $C^\bullet=\bigsqcup_{n\geq 0} C^{[n]}$ is the Hilbert scheme of points on the curve $C=\{x^2=y^2\}\subset \P^2$.
\end{remark}
\begin{remark}
We are no longer using equivariant homology, but have replaced the equivariant parameters by the fundamental classes of the components of the global curve $C$. It does make sense to consider the equivariant cohomology for the Hilbert schemes of points on $C=\{x^n=y^{dn}\}$, but we do not know how to produce a nice action of a rank $n$ torus in this case and whether it would agree with expectations. Note that there is a natural $(\C^*)^2$-action on $C$ and its Hilbert schemes, coming from the $(\C^*)^2$-action with weights $(d,1)$ on the plane.
\end{remark}

\begin{remark}
In general, we may describe the Hilbert schemes $C^{[2]}$ explicitly for $C=\overline{\{x^n=y^{dn}\}}$. Fix a decomposition into irreducible components $C=\bigcup_{i=1}^n C_i$. Since $C$ has $n$ rational components, there is a component $M_i\cong \Sym^2\P^1\cong \P^2$ for each $i$, and for each $i<j$ we have a component $N_{ij}\cong \text{Bl}_{pt}(\P^1\times \P^1)$, see \cite[Example 5.9]{Kiv}. The $\binom{n}{2}$ components $N_{ij}$ all intersect along an exceptional $\P^1$ that can be identified with $\Hilb^2(\C^2,0)$. Denote this line by $E$. We have $M_i\cap M_j=\emptyset$ for all $i\neq j$, and $M_i\cap N_{jk}\cong \P^1$ if $i=j$ or $i=k$, and $M_i\cap N_{jk}=\emptyset$ otherwise. Denote these lines of intersection by $L_i$. It is helpful to picture them as naturally isomorphic to $C_i$. The $L_i$ do not intersect each other, but intersect $\Hilb^2(\C^2,0)$ at points corresponding to the slopes of the corresponding lines $C_i$.

The homology of $C^{[2]}$ in degree two is spanned by $[L_i], i=1,\ldots, n$ and $E$. Denote the fundamental class $[C_i]\in H_2(C^{[1]})$ by $y_i$. Using the $A_n$-action, we have elements 
$$x_iy_i=[L_i]\in H_2(C^{[2]}), i=1,\ldots, n,\; \text{ and }  x_iy_j=[L_j]-[E], i\neq j.$$
Hence we have the relations
\begin{align*}
(x_i-x_j)(y_i+y_j)=0 & \; \forall i, j \\
(x_i-x_j)y_k=0 & \; k\neq i, j.
\end{align*}
Using these relations, we may express all the classes $[L_i], i=1,\ldots, n$ and $[E]$ as linear combinations of $x_iy_i$ and for example $x_1y_2$. Since $$\dim_\C H_2(C^{[2]})=n+1,$$ there cannot be any other relations in this degree.
This verifies equation \eqref{eq:conjecture} of Conjecture \ref{conjecture} in degree $q^2t^2$.
\end{remark}

\subsection{Compactified Jacobians and the MSV formula}
Homologically, we have the following sheaf-theoretic relationship, along the lines of Theorem \ref{thm:localcptfd}, between the cohomology of the compactified Jacobians and the Hilbert schemes of points $C^{[n]}$,  proved independently by Maulik-Yun and Migliorini-Shende.

\begin{theorem}[\cite{MY,MS}]
Let $\pi: \cC\to B$ be a locally versal deformation of $C$, and $\pi^{[n]}: \cC^{[n]}\to B, \pi^J: \bJac(\cC)\to B$ be the relative Hilbert schemes of points and compactified Jacobians of $\pi$. Then, inside $D_c^b(B)[[q]]$, we have
$$\bigoplus_{n\geq 0} q^n R\pi_*^{[n]}\C=\frac{\oplus q^i ~^pR^i\pi^J_*\C}{(1-q)(1-q\L)},$$ where $\L$ is the Lefschetz motive (ie. the constant local system on $B$ in this case.)
\end{theorem}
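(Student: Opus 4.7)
The plan is to analyze the relative Abel--Jacobi map $AJ_n : \cC^{[n]} \to \bJac(\cC)$, sending $\cI_Z \mapsto \cI_Z$, and to apply the decomposition theorem together with a support theorem of Ng\^o type. The strategy is to reduce everything to $R\pi^J_*\C$ and its perverse cohomology, and then to absorb the $n$-direction into a projective-space generating series.

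First, I would observe that $AJ_n$ is proper over $B$, and that for $n$ sufficiently large its fiber over a torsion-free rank one sheaf $\cF$ is the linear system $|\cF|\cong \P^{n-g+\epsilon}$. One then argues that, after passing to the versal base $B$, the derived pushforward $R(AJ_n)_*\C$ admits a decomposition whose $q$-generating series assembles into the projective-space kernel. Concretely, $\sum_n q^n [\P^{n+k}] = \frac{\L^{?}}{(1-q)(1-q\L)}$ up to low-degree corrections, so the denominator in the theorem has a natural geometric origin from the fiber structure of $AJ_n$.

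Second, I would apply the decomposition theorem to $R\pi^J_*\C$ to extract the perverse cohomology sheaves ${}^pR^i\pi^J_*\C$. The key input is a $\delta$-regularity/support estimate specialized to planar locally versal deformations, so that every simple perverse constituent of $R\pi^J_*\C$ must have full support on $B$. Composing $\pi^{[n]}=\pi^J\circ AJ_n$, pulling through the decomposition, and summing over $n$ would then yield the claimed identity in $D_c^b(B)[[q]]$, because the combined $q$-weight on the Hilbert-scheme side factors as (contribution from the perverse piece)$\times$(contribution from the $\P^{\bullet}$-fiber).

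The main obstacle is the support theorem in the planar, possibly reducible setting. Unlike the Hitchin fibration case originally treated by Ng\^o, here the compactified Jacobian fibers are generally reducible and need not be equidimensional; versality of $\pi$ together with planarity has to be used essentially to rule out extra perverse summands supported on proper subvarieties of $B$. A secondary difficulty is bookkeeping across the connected components of $\bJac(\cC)$ of different degrees: when $C$ is not integral, $AJ_n$ surjects only onto a distinguished subset of components, so the generating series has to be indexed compatibly with the multi-degree. These two points are precisely the technical hearts of \cite{MY} and \cite{MS}, and essentially everything else in the proof is formal manipulation of derived pushforwards and perverse truncations.
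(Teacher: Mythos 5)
Your sketch correctly identifies the main tools (relative Abel--Jacobi map, decomposition theorem, support theorem, multidegree bookkeeping), but the step where the generating series is supposed to close is a genuine gap, and it hides most of the content of the theorem. The relative Abel--Jacobi map $AJ_n:\cC^{[n]}\to\bJac(\cC)$ is a $\P^{n-g}$-bundle only for $n\geq 2g-1$; for smaller $n$ its fibers degenerate and ``pulling through the decomposition and summing over $n$'' gives you no control whatsoever over the low-degree terms on the left-hand side. The rational function $\frac{1}{(1-q)(1-q\L)}$ does encode the stable projective-bundle behavior, but knowing only the tail $n\gg0$ of a power series determines it only up to an arbitrary polynomial correction, and showing that this correction vanishes is precisely the assertion you are trying to prove. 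What both \cite{MY} and \cite{MS} actually do, after establishing full support for \emph{both} $R\pi^{[n]}_*\C$ (for every $n$, not just large $n$) and $R\pi^J_*\C$, is to observe that a perverse sheaf which is a direct sum of intersection complexes with full support is determined by its restriction to a dense open subset of $B$; they then restrict both sides to the locus of smooth curves, where the identity reduces to Macdonald's classical formula $\sum_n q^n H^*(\Sym^n C_b)=\frac{\sum_i q^i \Lambda^i H^1(C_b)}{(1-q)(1-q\L)}$. This reduction-to-the-generic-fiber step is absent from your sketch, and without it the argument does not close.

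A second, related omission: you never invoke the smoothness of the total spaces $\cC^{[n]}$, which for locally versal families of reduced planar curves is a theorem of Shende (and is where planarity and versality enter essentially). This is not a technicality. Without it the decomposition theorem yields a splitting of $R\pi^{[n]}_*IC_{\cC^{[n]}}$, not of $R\pi^{[n]}_*\C$, so even the shape of the claimed identity would be wrong; and the dimension estimates underlying the support theorem (the ``higher discriminant'' bound in \cite{MS}, or the $\delta$-regularity estimate in \cite{MY}) likewise rely on this smoothness. So the hypothesis you should be flagging as the geometric heart of the setup is smoothness of $\cC^{[n]}$, and the key structural step you are missing is the passage to the smooth locus via full support; the projective-bundle heuristics for $AJ_n$ and the support theorem for $\pi^J$ alone are not sufficient.
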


For reducible curves, the bigraded structure can be also computed from the theorem of Migliorini-Shende-Viviani  \cite[Theorem 1.16]{MSV}.
\begin{theorem}\label{thm:msvthm}
Let $\{C_S\to B_S\}_{S\subset [m]}$ be an independently broken family of reduced planar curves (see \cite{MSV} for the definition), such that all the $C_S\to B_S$ are H-smooth, ie. their relative Hilbert schemes of points have smooth total spaces, and such that the families $C_S\to B_S$ admit fine compactified Jacobians $\overline{J(C_S)}\to B_S$. Then, inside $D_c^b(\bigsqcup B_S)[[q]]$, we have:
    \begin{align}\label{eq:msvformula}
        (q\L)^{1-g}\bigoplus_{n\geq 0} q^n R\pi_*^{[n]}\C &= \text{Exp}\left((q\L)^{1-g}\frac{\bigoplus q^i IC(\Lambda^iR^1\pi_{sm*}\C[-i])}{(1-q)(1-q\L)}\right)\\
         & = \text{Exp}\left((q\L)^{1-g}\frac{\bigoplus q^i ~^pR^i\pi^J_*\C}{(1-q)(1-q\L)}\right).
    \end{align}
Here, $g: B_S\to \N$ is the upper semicontinuous function giving the arithmetic genus of the fibers, and $\L$ is the Lefschetz motive.
\end{theorem}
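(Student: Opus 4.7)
The plan is to combine three ingredients: the Abel--Jacobi fibration relating Hilbert schemes to compactified Jacobians, a Ng\^o-style support theorem for the Jacobian fibration, and a combinatorial plethystic gluing for Hilbert schemes on reducible curves that produces the $\mathrm{Exp}$ on the right-hand side.

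First, I would treat the integral case. For an H-smooth family $\pi:\cC\to B$ of integral planar curves of arithmetic genus $g$, the relative Abel--Jacobi map $AJ^{[n]}:\cC^{[n]}\to \bJac^n(\cC/B)$ is a projective bundle of relative dimension $n-g$ for $n\gg 2g$, so $RAJ^{[n]}_*\C$ splits as a sum of shifts of $\C$. Taking $R\pi^{[n]}_*$ and summing in $n$ produces the denominator $(1-q)(1-q\L)$ (the generating function of the cohomology of $\P^\infty$), while the normalizing twist by $(q\L)^{1-g}$ is tailored to absorb the Serre-dual shift and to symmetrize both sides. Small $n$ are matched by a direct comparison with the punctual contributions. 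The resulting identity in the integral case reduces to the support theorem
$${}^pR^i\pi^J_*\C\cong IC(\Lambda^i R^1\pi_{sm*}\C[-i]),$$
which I would deduce from $\delta$-regularity of $\pi^J$ (valid for planar reduced curves admitting a fine compactified Jacobian, by work of Ng\^o and Migliorini--Shende) together with the BBD decomposition theorem, by restricting to the generic smooth stratum to pin down the IC-extension data.

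Second, to pass from integral to the ``independently broken'' reducible setting, I would stratify $\bigsqcup B_S$ by the combinatorial types $S\subset[m]$ recording which analytic branches are separated. Over each stratum the curve locally factors as a product of subfamilies indexed by the parts of $S$, and the relative Hilbert scheme of a disjoint union splits as a product of individual Hilbert schemes. Summing generating functions over all ways of distributing points among the components is the combinatorial content of the plethystic exponential $\mathrm{Exp}$, so the local stratum contributions assemble into $\mathrm{Exp}$ of the integral-case expression. The independent breaking hypothesis ensures that these local product decompositions are compatible with the global IC-extensions, giving the asserted equality in $D^b_c(\bigsqcup B_S)[[q]]$.

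The main obstacle I expect is the support theorem in the reducible setting: one must rule out unexpected BBD summands of $R\pi^J_*\C$ supported on non-generic strata, and this is precisely where the fine compactified Jacobian hypothesis and the independent breaking are essential, as they enforce $\delta$-regularity of the Jacobian fibration globally. Once the support theorem is in hand, the plethystic exponentiation is a formal bookkeeping exercise matching the two sides of the identity.
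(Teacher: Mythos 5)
This statement is not proved in the paper at all: it is quoted verbatim as \cite[Theorem 1.16]{MSV} (Migliorini--Shende--Viviani), with a pointer to that reference for the definition of ``independently broken family.'' The paper uses the result as a black box in Section \ref{sec:cptfdjacobians} to compute examples (Examples \ref{ex:three}, \ref{ex:tacnode}) by plugging specific curves into the formula; it offers no argument for the theorem itself. So there is no in-paper proof against which to compare your attempt.

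That said, your sketch is a reasonable high-level summary of how the result is actually established in the literature (Migliorini--Shende for the integral case, extended by Migliorini--Shende--Viviani to the independently broken reducible setting, with the support theorem going back to Ng\^o and its $\delta$-regularity formulation). Two caveats on the details. First, in the integral case the Abel--Jacobi map $\cC^{[n]}\to\bJac^n(\cC/B)$ is a $\P^{n-g}$-bundle only for $n\geq 2g-1$; matching the low-$n$ terms is not a ``direct comparison'' but is precisely where one needs the splitting of $R\pi^{[n]}_*\C$ coming from H-smoothness and the decomposition theorem, together with a careful bookkeeping of perverse filtrations — this is the heart of the Migliorini--Shende argument, not a footnote. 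Second, the passage from a product decomposition of Hilbert schemes over strata to the plethystic $\mathrm{Exp}$ is more delicate than ``formal bookkeeping'': the objects live in $D^b_c(\bigsqcup B_S)$ rather than a single Grothendieck ring, and the independent-breaking condition is exactly what lets one glue the local IC-extensions across the $B_S$ consistently; asserting this works without tracking the semisimplifications of the pushforwards elides the main technical work of \cite{MSV}. If your goal was to re-derive the theorem from scratch, these two points would need to be filled in; if your goal was to explain why the formula is plausible, your sketch is accurate.
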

\begin{remark}
Later, we will use the substitution $\L\mapsto t^2$, which recovers the Poincar\'e polynomial.
\end{remark}

We turn to a more complicated example of $C^{[n]}$.
\begin{example}\label{ex:three}
Consider the (projective completion with unique singularity of the) curve $\{x^3=y^3\}$, ie. three lines on a projective plane intersecting at a point.

We are interested in computing the stalk of the left hand side of \eqref{eq:msvformula} at the central fiber. On the right, the 
exponential map is a sum over all distinct decompositions of $C=C_1\cup 
C_2\cup C_3$ into subcurves. By symmetry, there are only three 
fundamentally different ones: the decomposition into three disjoint 
lines, the decomposition into a node and a line, and the trivial 
decomposition. Since we know that the fine compactified Jacobians of 
nodes and lines are points \cite{MSV}, these terms on the right hand 
side are relatively easy to compute. Namely, for the three lines we 
have $\left(\frac{q\L}{(1-q)(1-q\L)}\right)^3$, and $\left(\frac{q\L}{(1-q)(1-q\L)}\right)^2$ for the decompositions to a node plus a line.

As to the last term on the right, $C$ has arithmetic genus one, so is 
its own fine compactified Jacobian, as shown by Melo-Rapagnetta-Viviani \cite{MRV1}. Moreover, $C$ can be realized as a type III Kodaira fiber 
in a smooth elliptic surface $f: E\to T$, where $T$ is a smooth curve. 
Let $\Sigma$ be the singular locus of $f$.
By the decomposition theorem of Beilinson-Bernstein-Deligne-Gabber \cite{BBD}, we have from eg. \cite[Example 1.8.4]{dCM} $$Rf_*\Q_E[2]=
\Q_T[2]\oplus (IC(R^1f^{sm}_*\Q_E)\oplus \mathcal{G})\oplus \Q_T$$ 
where $\mathcal{G}$ is a skyscraper sheaf on $\Sigma$ with stalks 
$H_2(f^{-1}(s))/\langle [f^{-1}(s)]\rangle$. Note that the rank of this 
sheaf is the number of irreducible components of the fiber {\em minus 
one}.

The terms in the above direct sum are ordered so that we first have the 
second perverse cohomology sheaf $~^p\mathcal{H}^2(Rf_*\Q_E[2])$, then 
the first one inside the parentheses and lastly the zeroth perverse 
cohomology sheaf. Since the base is smooth $IC(R^1)=R^1$ and its stalk 
is zero at the central fiber. This gives that the numerator of our last 
term is $1+2q\L+q^2\L$. In total, we have 
\begin{align}
    \sum_{n\geq 0} q^nH^*(C^{[n]})&=\left(\frac{q\L}{(1-q)(1-q\L)}\right)^3+3\left(\frac{q\L}{(1-q)(1-q\L)}\right)^2+\\ &\frac{1+2q\L+q^2\L}{(1-q)(1-q\L)}, 
\end{align}
which we compute to be 
\begin{equation}
\frac{q^6\L^3 - 2q^5\L^2 + q^4\L^2 + q^3\L^2 + q^4\L - 2q^3\L + q^2\L + q^2 - 2q + 1}{(1-q)^3(1-q\L)^3} \label{eq:(3,3) poincare}
\end{equation}
\end{example}

Let us now consider the simplest example where $d>1$.
\begin{example}\label{ex:tacnode}
  Similarly, we may consider the projective model of the curve $C=\{x^4=y^2\}$, which has two rational components that are parabolas. This also has arithmetic genus one and by the same line of reasoning as above we have
\begin{align*}
\sum_{n\geq 0} q^nH^*(C^{[n]})&=\left(\frac{q\L}{(1-q)(1-q\L)}\right)^2+ \frac{1+q\L+q^2\L}{(1-q)(1-q\L)}\\&=\frac{q^4\L^2 - q^3\L + q^2\L - q + 1}{(1-q)^2(1-q\L)^2}.
\end{align*}
\end{example}
Let us now compute the Hilbert series, as predicted by Conjecture \ref{conjecture}, in the cases of Examples \ref{ex:three}, \ref{ex:tacnode}. 

\begin{example}
In the case of Example \ref{ex:three}, write 
$$U_i=(x_j-x_k)\C[x_1,x_2,x_3,y_j+y_k,y_i], \; k\neq i\neq j\neq k. $$

Denote by $\gr\dim V$ the $(q,t)$-graded dimension of a bigraded vector space $V$. 
Then \begin{align*}
gr\dim(U_1+U_2+U_3)=&\gr\dim(U_1)+\gr\dim(U_2)+\gr\dim(U_3)\\&-\gr\dim((U_1+U_2)\cap U_3)-\gr\dim(U_1\cap U_2)
\end{align*}
and we compute that:
\begin{align*}
(U_1+U_2)\cap U_3=&(x_1-x_3)\C[x_1,x_2,x_3,y_1+y_2+y_3]\\
&+(x_1-x_2)(x_2-x_3)y_3\C[x_1,x_2,x_3,y_1+y_2+y_3],\\
U_1\cap U_2=&(x_1-x_2)\C[x_1,x_2,x_3,y_1+y_2+y_3].
\end{align*}

We then have
$$\gr\dim(U_1+U_2)\cap U_3=\frac{q+q^4t^2}{(1-q)^3(1-qt^2)}$$ 
and  $$\gr\dim(U_1\cap U_2)=\frac{q^2}{(1-q)^3(1-qt^2)}.$$
Hence 
$$\gr\dim(V)=\frac{1}{(1-q)^3(1-qt^2)^3}-3\frac{q}{(1-q)^3(1-qt^2)^2}+\frac{q+q^2+q^4t^2}{(1-q)^3(1-qt^2)},$$
which can be checked to equal the right-hand side of \eqref{eq:(3,3) poincare}.
\end{example}

\begin{example}
In the case of Example \ref{ex:tacnode}, write 
\begin{align*}
U=&(x_1-x_2)\C[x_1,x_2,y_1+y_2], \\
U'=&(x_1-x_2)^2\left(\C[x_1,x_2,y_1+y_2]\oplus \C[x_1,x_2,y_1+y_2](y_1-y_2)\right).
\end{align*}
Then $U\cap U'=(x_1-x_2)^2\C[x_1,x_2,y_1+y_2]$, and we have that the right hand side of \eqref{eq:conjecture} equals
\begin{align*}
&\frac{1}{(1-q)^2(1-q\L)^2}-\frac{q}{(1-q)^2(1-q\L)^2}-\frac{q^2(1+q\L)}{(1-q)^2(1-q\L)}&\\&+\frac{q^2}{(1-q)^2(1-q\L)}
=\frac{q^4\L^2-q^3\L+q^2\L-q+1}{(1-q)^2(1-q\L)^2}.& 
\end{align*}
\end{example}

As a continuation of Examples \ref{ex:three}, \ref{ex:tacnode}, let us verify that the Poincar\'e series agrees with the Oblomkov-Rasmussen-Shende conjectures in both cases, since this result does not appear in the literature.

\begin{proposition}\label{prop:agrees w eh}
If $C=\{x^3=y^3\}$, then under the substitutions $$q\L\mapsto T^{-1}, \; q\mapsto Q,$$ we have the following equality in $\Z[[q,t]]$:
$$\sum_{n\geq 0} q^n H^*(C_0^{[n]})=f_{000}(Q,0,T),$$
where $f_{000}(Q,A,T)$ denotes the triply graded Poincar\'e series of
$$\HHH(T(3,3)).$$ Note that we are considering the punctual Hilbert schemes $C_0^{[n]}$ here.
\end{proposition}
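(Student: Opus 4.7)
The plan is to extract the Poincar\'e series of the punctual Hilbert schemes $C_0^{[n]}$ from the global formula \eqref{eq:(3,3) poincare} and match it against an explicit formula for the $a=0$ part of $\HHH(T(3,3))$ obtained from the Gorsky--Hogancamp identification.

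Since $C$ has a unique singular point $p$ and $C\setminus\{p\}$ is a disjoint union of three copies of $\A^1$, any length-$n$ subscheme of $C$ decomposes uniquely into its part supported at $p$ and its complement supported in the smooth locus. This yields a locally closed stratification
\[
C^{[n]} \;=\; \bigsqcup_{k+l=n} C_0^{[k]}\times \mathrm{Sym}^l(C\setminus\{p\}).
\]
Because $C^{[n]}$ and $C_0^{[k]}$ are projective, their compactly supported and ordinary Poincar\'e polynomials coincide. Combining additivity of $P_c$ over locally closed stratifications, multiplicativity under products, and $\mathrm{Sym}^l\A^1\cong \A^l$ with $P_c(\A^l)=\L^l$, we obtain
\[
\sum_n q^n P(C^{[n]}) \;=\; \Bigl(\sum_k q^k P(C_0^{[k]})\Bigr)\cdot \frac{1}{(1-q\L)^3}.
\]
Multiplying \eqref{eq:(3,3) poincare} by $(1-q\L)^3$ therefore produces an explicit rational expression for $\sum_k q^k P(C_0^{[k]})$; as a sanity check the first few coefficients recover $P(C_0^{[0]})=1$, $P(C_0^{[1]})=1$, $P(C_0^{[2]})=1+\L$, consistent with $C_0^{[2]}\cong \P^1$ coming from a codimension-one subspace of the Zariski tangent space at the triple point.

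On the knot side, the link of the singularity of $C$ is $T(3,3)$, which is the closure of $\FT_3$. By Corollary~\ref{coro:ft} together with the parity of torus links \cite{GoHo}, one has $\HHH(T(3,3))^{a=0}\cong J/(\by)J$ where $J=\bigcap_{i<j}\langle x_i-x_j, y_i-y_j\rangle\subset \C[x_1,x_2,x_3,y_1,y_2,y_3]$. The bigraded Hilbert series of $J$ is Haiman's $\langle \nabla p_1^3, e_3\rangle$, and freeness of $J$ over $\C[\by]$ converts this into a closed rational expression for the Hilbert series of $J/(\by)J$, and hence for $f_{000}(Q,0,T)$.

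It then remains to perform the substitution $q\L\mapsto T^{-1}$, $q\mapsto Q$ in the expression from the first step and verify it equals the rational function from the second. Since both are rational functions with finitely many terms in their numerators and fixed denominators, this is a finite polynomial equality. The main obstacle I anticipate is reconciling grading and normalization conventions: the geometric $(q,\L)$-grading carries no shifts, whereas the knot-homological $(Q,A,T)$-grading in the ORS framework comes with shifts depending on the writhe of the braid word for $\FT_3$, the number of link components of $T(3,3)$, and the arithmetic genus of $C$. Once these shifts are fixed unambiguously by matching low-degree terms to the explicit small-$n$ computations above, the verification reduces to elementary algebra.
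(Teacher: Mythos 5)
Your first step is correct and is what the paper does implicitly: the locally closed stratification $C^{[n]} = \bigsqcup_{k+l=n} C_0^{[k]}\times \Sym^l(C\setminus\{p\})$, together with $C\setminus\{p\}\cong \A^1\sqcup\A^1\sqcup\A^1$ for this curve, gives $\sum_n q^n P(C_0^{[n]}) = (1-q\L)^3 \sum_n q^n P(C^{[n]})$, and the right-hand side was already computed in Example~\ref{ex:three}, equation~\eqref{eq:(3,3) poincare}. Where you genuinely diverge from the paper is on the knot side. The paper's proof simply transcribes the explicit Poincar\'e polynomial $f_{000}(Q,A,T)$ of $\HHH(T(3,3))$ from Elias--Hogancamp \cite{EH}, sets $A=0$, applies the stated substitution, and checks the resulting rational-function identity against $(1-q\L)^3$ times \eqref{eq:(3,3) poincare}. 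You instead propose to re-derive $f_{000}(Q,0,T)$ from the chain $\HHH^{a=0}(T(3,3))\cong J/\by J$ (via Gorsky--Hogancamp and parity) together with Haiman's Hilbert-series identity $\dim_{q,t}J=\langle\nabla p_1^3,e_3\rangle$.

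That route is closer to the conceptual themes of the paper, but as written it is not yet a proof: you never actually evaluate $\nabla p_1^3$ (a finite but nontrivial Macdonald-polynomial computation), and you flag but do not resolve the translation between the internal $(q,t)$-bigrading on $J/\by J$ and the knot-homological $(Q,T)$-bigrading with its writhe- and component-dependent shifts. Until both are carried out explicitly, you have only reduced the proposition to a plausible identity rather than verified it. If you want to avoid quoting the formula from \cite{EH}, a cleaner finish would be to produce a homogeneous $\C[\by]$-basis of $J=\bigcap_{i<j}\langle x_i-x_j,y_i-y_j\rangle\subset\C[x_1,x_2,x_3,y_1,y_2,y_3]$ directly (small enough to do by hand for $n=3$) and read off the Hilbert series of $J/\by J$ from it, rather than invoking $\nabla$.
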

\begin{proof}
From \cite[page 9]{EH}, we have
\begin{align*}
f_{000}(Q,A,T)= \frac{1+A}{(1-Q)^3}\Big(&(T^3Q^2 + Q^3T^2 - 2T^2Q^2 - 2TQ^3 - 2 QT^3\\ & 
+ T^3 + Q^3 + TQ^2 + QT^2+TQ)+
 (T^2Q^2 \\& - 2TQ^2  - 2QT^2 + T^2 + Q^2 + TQ + T + T)A + A^2\Big).
\end{align*}
It is quickly verified that letting $A=0$ and doing the substitution above gives the result.
\end{proof}

\begin{remark}
In fact, \cite{EH} compute the polynomials $f_v(A,Q,T)$ corresponding to HOMFLY homologies of certain complexes $C_v$, where $v$ is any binary sequence, using a recursive description. All of these complexes are supported in even degree, and it would be interesting to know how the corresponding pure braids are realized as affine Springer fibers. It would also be interesting to understand these recursions either on $\Hilb^n(\C^2)$ or in terms of affine Springer fibers for $GL_n$.
\end{remark}

The case $C=\{x^2=y^4\}$ is slightly more straightforward. $$\sum_{n\geq 0} q^nH^*(C_0^{[n]})=(1-\L^2)^2\sum_{n\geq 0} q^nH^*(C^{[n]})$$
 can be checked to equal with the Poincar\'e polynomial of $\HHH^{a=0}(T(2,4))$ 
for example using \cite[Corollary 15]{ORS}, which states
$$P(\HHH^{a=0}(T(2,4)))=\frac{Q^2+(1-Q)(T^2+QT)}{(1-Q)^2T^2}.$$

\end{document}